\tikzset{external/mode=graphics if exists}
\newcommand{\arrow}{\rightarrow}
\newcommand{\ds}{\displaystyle}
\newcommand{\kw}{\rule{2mm}{2mm}}
\newtheorem{example}{\textbf{Example}}
\newcommand{\norm}[1]{{\|  #1\|}}
\renewcommand{\l}{\left}
\renewcommand{\r}{\right}
\newcommand{\om}{\Omega}
\newcommand{\ga}{\Gamma}
\newcommand{\fsparse}{\Upsilon}
\newcommand{\reals}{\mathbb{R}}
\newcommand{\sign}{\mathrm{sign}}
\DeclareMathOperator{\argmin}{argmin}
\renewenvironment{proof}{{Proof.}}{\hfill\kw}
\newcounter{theassumption}
\newtheorem{assumption}[theassumption]{{Assumption}}
\newtheorem{proposition}{{Proposition}}
\newtheorem{definition}{{Definition}}
\newtheorem{lemma}{{Lemma}}
\newtheorem{remark}{{Remark}}
\newtheorem{corollary}{{Corollary}}
\newtheorem{theorem}{{Theorem}}
\begin{document}
	\title[FEM approximation for nonconvex PDE optimal control problems]{Numerical approximation of regularized non--convex elliptic optimal control problems by the finite element method}
	
	\author{Pedro Merino$^\ddag$ and Alexander Nenjer$^\ddag$}
	\address{$^\ddag$Research Center of Mathematical Modeling (MODEMAT) and Department of Mathematics, Escuela Polit\'ecnica Nacional, Quito, Ecuador}
	\keywords{}
	\subjclass[2010]{90C26, 90C46, 49J20, 49K20}
	\thanks{$^*$This research has been supported by Research Project PIGR-19-03 funded by Escuela Polit\'ecnica Nacional, Quito--Ecuador.}
	
	\smallskip
	\begin{abstract}
		We investigate the numerical approximation of an elliptic optimal control problem which involves a nonconvex local regularization of the $L^q$-quasinorm penalization (with $q\in(0,1)$) in the cost function. Our approach is based on the \emph{difference--of-- convex} function formulation, which leads to first-order necessary optimality conditions, which can be regarded as the optimality system of an auxiliar convex $L^1$--penalized optimal control problem. 
		We consider piecewise--constant finite element approximation for the controls, whereas the state equation is approximated using piecewise--linear basis functions. Then, convergence results are obtained for the proposed approximation. Under certain conditions on the support's boundary of the optimal control, we deduce an order of  $h^\frac12$ approximation rate of convergence where $h$ is the associated discretization parameter. We illustrate our theoretical findings with numerical experiments that show the convergence behavior of the numerical approximation.%
	\end{abstract}
	
	\maketitle
	
	\section{Introduction}
	Optimal control problems with nonconvex $L^q$-quasinorms with $q \in (0,1)$ can be regarded as an approximation of the so-called $L^0$ sparse penalized optimal control problems, see \cite{itoku2014} and \cite{merino2019}. This kind of penalization induces attractive sparsification effects on the control, which might be useful in applications requiring a localized action of the control within the domain e.g., inverse sparse reconstruction problems. 
	
	Optimal controls induced by $L^q$-quasinorm penalizers share similarities with those promoted by $L^1$-norm. It is well known that these controls tend to have small supports, depending on the sparse regularization cost, see \cite{stadler06}. However, a simple but essential difference in considering $L^q$-quasinorms is that optimal controls might jump at the boundary of its support. This feature is potentially advantageous for specific applications. In particular, in those applications where sparse solutions are crucial in achieving the desired state, see \cite{casas2013etal}. Other examples arise in image-processing-related applications where jumps can be used to achieve certain graphical features. 
	
	One recognizable difficulty in considering the $L^q$-quasinorm in the cost function is the lack of convexity of the penalizer. Therefore,  $L^p$-spaces ($p>1$) are not well suited for this class of problems. In \cite{itoku2014}, a penalization in the gradient is added to the cost, which allows using compactness arguments to guarantee the existence of solutions in $H^1$. However, the gradient cost term eliminates the jumps of the solutions, which is an essential feature of the quasinorm penalization. Therefore, a more convenient space to promote piecewise smooth controls is the space of functions of bounded variation denoted by $BV$. In this case, the existence of solutions can also be argued by compactness arguments.

	Several contributions published in the last decade are devoted to the numerical approximation of optimal control problems related to sparse controls. A linear order of convergence was derived in \cite{wachsmuth} for linear elliptic optimal control problems with an $L^1$ cost term, where a general discretization scheme including piecewise constant approximation was studied. The same order of convergence was obtained for sparse optimal control problems governed by linear elliptic equations by considering regular Borel measures as control space, where controls were approximated using linear combinations of Dirac measures, see \cite{caclaku2012}, and \cite{pievex2013}. The semilinear case with $L^1$--norm penalization in the cost was studied in \cite{casherwas2012} and \cite{casherwas2012b} for piecewise constant and piecewise linear approximations of the control, respectively. These papers also reported a linear order of error for both approximations.

	On the other hand, the distributed optimal control problem with a $L^q$--quasinorm ($q\in (0,1)$) penalization has the form:
	
	\begin{equation}
		\tag{$P$} \label{eq:OCP}
		\begin{cases}
			\displaystyle\min_{(y,u)} ~\frac{1}{2}\| y-y_d \|^2_{L^2(\om)}+\frac{ \alpha}{2}\|u\|^2_{L^2(\om)}+\beta \int_\om |u|^{q} dx\\
			\hbox{ subject to: }\\
			\hspace{40pt}\begin{array}{rll}
				A y=&u + f, &\hbox{in  } \om, \\
				y=&0, &\hbox{on  }  \Gamma,
			\end{array}
			\\
			\hspace{45pt} u \in U_{ad} \cap BV(\om),
		\end{cases}
	\end{equation}
	in our setting, $\om \subset \reals^2$ is star-shaped domain with boundary $\Gamma$. $y_d$ is  given in $L^p(\om)$, with $p>n$ and $f$ is given in $L^2(\om)$.  The admissible control set $U_{ad}$ is a closed convex set in $L^2(\om)$. Let $A$ 
	be a uniformly elliptic second--order differential operator of the form:
	\begin{equation}\label{eq:A}
		(Ay) (x) = - \sum_{i,j=1}^n \frac{\partial}{\partial x_i}	\l( a_{ij}(x)\frac{\partial y(x)} {\partial x_j }\r) +c_0 y(x),
	\end{equation}
	with coefficients $a_{ij}$ in $C(\bar\om)$, and $c_0\geq 0$ in $L^{\infty}(\om)$. Moreover, the matrix $(a_{ij})$ is symmetric and fulfills the uniform ellipticity condition: \[ \exists\, \sigma>0 : \quad \ds  \sum_{i,j=1}^n a_{ij}(x) \xi_i\xi_j \geq \sigma |\xi|^2, \quad \forall \xi \in \reals^n, \text{for almost all } x \in \om. \]
	
	A related problem was studied in \cite{itoku2014} with a quadratic penalizer on the gradient. There, the authors established the existence of solutions of controls in $H_0^1(\om)$ using compactness arguments. Moreover, by using a suitable regularization of the $L^q$-quasinorm, a primal-dual scheme for its numerical solution was also proposed. 
	On the other hand, the optimal control problem with an additional quadratic $L^2$ penalizer on the control was discussed in \cite{merino2019} through a difference--of--convex function formulation of a regularization of the nonconvex problem. A regularizing assumption on the control space in $H^1$ was also considered due to the nonconvex nature of the problem and its intrinsic difficulties in proving the solution's existence for \eqref{eq:OCP}. 
	
	\emph{Our contribution.} 	This paper aims to analyze the numerical approximation by the finite element method of the regularized nonconvex optimal control problem  \eqref{eq:OCP}. Although several related papers investigated optimality conditions and numerical methods (see \cite{itoku2008},\cite{dwachsmuth}), we cannot track literature for a priori error estimates for the discretization by the finite element method for this type of problem. 
	
	One of the difficulties in studying the numerical approximation of nonconvex problems is related to the expected nonuniqueness of solutions. For differentiable nonconvex problems, the associated analysis is based on second--order optimality conditions. See for example \cite{aracastro2002}.
	
	Moreover, we consider the space $BV(\om)$ of bounded variation functions as a control space that allows the presence of jumps in the solutions. We rely on the interpolation operator introduced in \cite{bartels15}. It is known that piecewise constant functions are not a suitable approximation for problems involving functions of bounded variation due to the norm in $BV(\om)$. However, the total variation term is not present in our problem.
	
	It turns out that the difference-of-convex function formulation, together with its associated optimality system derived in \cite{merino2019}, are useful to carry out an approximation analysis for the finite element approximation and help us to establish second-order sufficient conditions for local solutions of our problem. In fact, the DC formulation can be expressed as a linearly perturbed $L^1$ sparse optimal control, for which we can carry out a similar analysis as in \cite{wachsmuth}; then, we end up expressing the error in terms of the corresponding linear perturbations. However, the results of \cite{wachsmuth} cannot be applied to our problem. We still require to analyze this perturbation (which depends on the control), taking into account the numerical approximation for the control space. Therefore, for our analysis, we use the pointwise characterization of the solution resulting from the maximum principle established in \cite{itoku2008}. Following a similar assumption on the active sets as in \cite{wachsmuth}, we can obtain an error estimate of order $h^\frac12$ provided that the boundary of the solution support is contained in a subset of the mesh whose measure is proportional to the discretization parameter.

	\emph{Organization of the paper.} In Section \ref{s:problem}, we state the optimal control problem with nonconvex penalties and briefly discuss its main properties. The discretization technique is presented in Section 3. Section 4 is devoted to estimating the convergence rate of the FEM approximation. Finally, we confirm the theoretical findings with numerical measures of the convergence rate with respect to $h$.
	
	
	\section{Regularized optimal control problem}\label{s:problem}
	
	For $q\in(0,1)$ and $\gamma >>1$, we introduce the regularization mapping 
	
	\begin{equation}
		u \mapsto \fsparse_{q,\gamma} (u) := \int_\om h_{q,\gamma}(u(x))^{q} dx,
	\end{equation}
	where $h_{q,\gamma}$ is Huber-like local smoothing of the absolute value introduced in \cite{merino2019}, and defined by
	$$
	h_{q,\gamma}(t):=
	\left\{ \begin{array}{ll}
		\ds q{\gamma^{\frac{1-q}{q}}}|t|^{\frac{1}{q}},& \hbox{if } t \in[-\frac{1}{\gamma},\frac{1}{\gamma}],\vspace{2mm}
		\\
		\ds|t| - \frac{1-q}{\gamma},&   \text{ otherwise. }
	\end{array} \right.
	$$ 
	The mapping $\fsparse_{p,\gamma}$ allows us to define the following family of (non--convex and non--differentiable) optimal control problems that approximate problem \eqref{eq:OCP}:
	\begin{equation}
		\tag{$P_\gamma$} \label{e:OCP_c}
		\begin{cases}
			\displaystyle\min_{(y,u)}\mathcal{J}_\gamma(u,y):= ~\frac{1}{2}\| y-y_d \|^2_{L^2(\om)}+\frac{\alpha}{2}\| u\|^2_{L^2(\om)}+\beta \fsparse_{q,\gamma}(u)\\
			\hbox{ subject to: }\\
			u \in U_{ad}  \cap BV(\om)\qquad\text{  and}
			\hspace{20pt}\begin{array}{rll}
				A y=&u + f, &\hbox{in  } \om, \\
				y=&0, &\hbox{on  }  \Gamma.
			\end{array}
		\end{cases}
	\end{equation}
	Here, the admissible of controls $U_{ad}$ corresponds to the box--constraint type, i.e.
	\begin{equation}\label{eq:U_ad}
		U_{ad}=\{u\in L^2(\om): u_a \leq u(x) \leq u_b, \, \text{a.a. } x\in \om \},
	\end{equation}  
	for reals  $u_a$ and $u_b$, such that $u_a<0<u_b$.

	By the classical theory of elliptic partial differential equations,  the state equation is formulated in the weak sense by introducing the bilinear form associated with the elliptic operator \eqref{eq:A}, denoted by $a$. 
	
	Thanks to the Lax--Milgram theorem and elliptic regularity \cite{brezis2010}, we know that for every $w \in L^2(\om)$ there exist $y\in H_0^1(\om)$ and a positive constant $c_a>0$, satisfying: 
	\begin{align}\label{eq:state}
		&a(y,v) = (w,v)_{L^2(\om)}, \quad \forall v \in H_0^1(\om),\\
		&\norm{y}_{H_0^1(\om)} \leq c_a\norm{w}_{L^2(\om)}.
	\end{align}
	Let us introduce the solution operator  $S:L^2(\om ) \arrow H_0^1(\om)$ corresponding to the linear and continuous operator which assigns to each $u \in L^2(\om)$ the corresponding solution $y=y(u) \in H_0^1(\om)$ satisfying the state equation in \eqref{e:OCP_c}. Moreover, as in \cite{wachsmuth}, the operator $S$ and its adjoint $S^*$ are continuous from $L^2(\om)$ to $L^\infty(\om)$. Thus, the state $y$ associated to the control $u$ has the representation  $y=S(u + f)=Su+y_f$, with $y_f=Sf$. 
	
	\begin{assumption}\label{a:cotaDu}
		There exists a positive constant $M$ such that $\norm{\nabla u}_{\mathcal{M}(\om)} \leq M$ for all $u\in U_{ad}$. Here $\norm{\cdot}_{\mathcal{M}(\om)}$ is the norm on the space of regular Borel measures.
	\end{assumption}
	
	\begin{theorem} \label{t:exist}
		Under Assumption \ref{a:cotaDu}, there exists a solution for problem \eqref{e:OCP_c}
	\end{theorem}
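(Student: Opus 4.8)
The plan is to use the direct method of the calculus of variations. First I would take a minimizing sequence $(y_n, u_n)$ with $u_n \in U_{ad} \cap BV(\om)$ and $y_n = S(u_n + f)$, such that $\mathcal{J}_\gamma(u_n, y_n) \to \inf (P_\gamma)$; note the infimum is finite since $\fsparse_{q,\gamma} \ge 0$ and the constraint set is nonempty (it contains, e.g., the zero control, which lies in $U_{ad}$ because $u_a < 0 < u_b$, and in $BV(\om)$). Because $U_{ad}$ is bounded in $L^\infty(\om) \subset L^2(\om)$, the sequence $(u_n)$ is bounded in $L^2(\om)$; combined with Assumption \ref{a:cotaDu}, which gives $\norm{\nabla u_n}_{\mathcal{M}(\om)} \le M$ uniformly, we get that $(u_n)$ is bounded in $BV(\om)$. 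By the compact embedding $BV(\om) \hookrightarrow L^1(\om)$ (valid on a bounded Lipschitz — here star-shaped — domain) and weak-$*$ compactness in $BV$, we may extract a subsequence (not relabeled) with $u_n \to \bar u$ strongly in $L^1(\om)$ and weakly-$*$ in $BV(\om)$, hence $\nabla u_n \warrow \nabla \bar u$ in $\mathcal{M}(\om)$, so $\bar u \in BV(\om)$.

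Next I would check that the limit $\bar u$ is admissible and identify the limit state. Since $u_n \to \bar u$ in $L^1(\om)$, a further subsequence converges pointwise a.e., so the pointwise box constraints $u_a \le \bar u \le u_b$ pass to the limit and $\bar u \in U_{ad}$; equivalently one uses that $U_{ad}$ is convex and closed in $L^2$, hence weakly closed, together with $L^\infty$-boundedness to get $u_n \warrow \bar u$ in $L^2(\om)$. The control-to-state operator $S$ is linear and continuous from $L^2(\om)$ to $H_0^1(\om)$ (and compactly into $L^2(\om)$ by Rellich), so $y_n = S u_n + y_f \to \bar y := S \bar u + y_f$ strongly in $L^2(\om)$, and $(\bar u, \bar y)$ satisfies the state equation in \eqref{e:OCP_c}.

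Finally I would pass to the limit in the cost. The tracking term $\tfrac12 \norm{y_n - y_d}^2_{L^2(\om)} \to \tfrac12 \norm{\bar y - y_d}^2_{L^2(\om)}$ by strong $L^2$ convergence of the states; the term $\tfrac{\alpha}{2}\norm{u_n}^2_{L^2(\om)}$ is weakly lower semicontinuous along $u_n \warrow \bar u$ in $L^2(\om)$. For the penalty term, the integrand $t \mapsto h_{q,\gamma}(t)^q$ is continuous and nonnegative (indeed, for $\gamma \gg 1$ one checks it is also bounded on the bounded interval $[u_a,u_b]$), so along the a.e.-convergent subsequence $u_n \to \bar u$ one gets $h_{q,\gamma}(u_n)^q \to h_{q,\gamma}(\bar u)^q$ a.e., and by dominated convergence (dominating constant from the $L^\infty$ bound on $U_{ad}$) $\fsparse_{q,\gamma}(u_n) \to \fsparse_{q,\gamma}(\bar u)$ — in fact even continuity, not just lower semicontinuity, is available here. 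Combining, $\mathcal{J}_\gamma(\bar u, \bar y) \le \liminf_n \mathcal{J}_\gamma(u_n, y_n) = \inf (P_\gamma)$, so $(\bar u, \bar y)$ is a solution. The main subtlety — and the reason Assumption \ref{a:cotaDu} is imposed — is securing compactness of the minimizing sequence: the cost functional itself contains no coercive control of $\nabla u$ (the total variation is absent from the objective), so without the a priori bound on $\norm{\nabla u}_{\mathcal{M}(\om)}$ one would only have $L^2$-weak compactness, which is too weak to handle the nonconvex penalty $\fsparse_{q,\gamma}$ since $t \mapsto h_{q,\gamma}(t)^q$ is not convex and hence not weakly-$L^2$ lower semicontinuous; the strong $L^1$ (equivalently a.e.) convergence furnished by the $BV$ bound is exactly what makes the limit pass in that term. $\ged$
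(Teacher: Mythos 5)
Your proposal is correct and follows essentially the same route as the paper: the direct method with a minimizing sequence, boundedness in $BV(\om)$ from the $L^\infty$ bound of $U_{ad}$ plus Assumption \ref{a:cotaDu}, extraction of a weakly-$*$ convergent subsequence giving strong $L^1$ convergence, closedness of $U_{ad}$, continuity of the state map, continuity of $\fsparse_{q,\gamma}$ along the $L^1$-convergent subsequence (the paper cites \cite[Lemma 2]{merino2019}, you argue it via a.e. convergence and dominated convergence, which is the same content), and weak lower semicontinuity of the quadratic $L^2$ term. The only cosmetic difference is that you pass the state to the limit via compactness of $S$ in $L^2(\om)$ rather than via an $H_0^1(\om)$ bound on the states as the paper does; both are valid.
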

	\begin{proof}
		Let $\{(y_k,u_k)\}_{k\in \mathbb{N}} \subset H_0^1(\om)\times U_{ad}$ a minimizing sequence 	for problem \eqref{e:OCP_c} such that $y_k \in H_0^1(\om)$ is the corresponding state associated to $u_k$. Let us check the boundedness of $\{(y_k,u_k)\}_{k\in \mathbb{N}}$. Indeed, $u_k \in U_{ad}$ hence $\norm{u_k}_{L^\infty (\om)} \leq \max \{-u_a,u_b\}$. Furthermore, by Assumption \ref{a:cotaDu}  the sequence $\{u_k\}_{k\in \mathbb{N}}$ is bounded in $BV(\om)$.  
		
		On the other hand, since $y_k$ satisfies the state equation
		\begin{equation*}
			\begin{array}{rll}
				A y_k=&u_k + f, &\hbox{in  } \om, \\
				y=&0, &\hbox{on  }  \Gamma
			\end{array}	
		\end{equation*}
		and, since $BV(\om) \hookrightarrow L^2(\om)$ in two dimensions (see \cite[Theorem 10..1.3]{attouch2014}), the Lax-Milgram Theorem guarantees the existence of a positive constant $c$, such that
		
		$$ \norm{y_k}_{H_0^1(\om)} \leq c \norm{u_k}_{L^2(\om)} \leq c \max \{-u_a,u_b\},$$ 
		which shows that the sequence $\{y_k\}_{k\in \mathbb{N}}$ is bounded in $H_0^1(\om)$. Thus, there is a subsequence (without renaming) $\{(y_k,u_k)\}_{k\in \mathbb{N}}$, such that 
		$u_k \overset{\ast}{\rightharpoonup} \bar u $ in $BV(\om)$ and $y_k \rightharpoonup \bar y$ in $H_0^1(\om)$. We have that $u_k \arrow \bar u$ in $L^1(\om)$  and $y_k \arrow \bar y$ in $L^2(\om)$. Since $U_{ad}$ is closed, it also follows that $u \in U_{ad}$.
		
		By continuity of $\fsparse_{q,\gamma}$ from $L^1(\om)$ into $\reals$, see \cite[Lemma 2]{merino2019} we can pass to the limit to obtain that
		\begin{align*}
			\lim_{k\arrow \infty} \fsparse_{q,\gamma}(u_k) = \fsparse_{q,\gamma}(\bar u) 
			\quad\text{ and }\quad\lim_{k\arrow \infty} \frac{1}{2}\| y_k-y_d \|^2_{L^2(\om)} = \frac{1}{2}\| \bar y-y_d \|^2_{L^2(\om)}. 
		\end{align*}
		Moreover, $u \mapsto \int_\om u^2 dx$ is lower semi-continuous. Since $\{u_k\}_{k\in\mathbb N}\subset L^2(\om)$, then $u_k \arrow \bar u$ in $L^1(\om)$ implies
		\begin{align*}
			\liminf_{k\arrow\infty}\frac{\alpha}{2}\|  u_k\|^2_{L^2(\om)}=\liminf_{k\arrow\infty}\frac{\alpha}{2} \int_\om  u_k(x)^2 dx   \geq \frac{\alpha}{2}\|  \bar u\|^2_{L^2(\om)}. 
		\end{align*}
		Altogether, we infer that $J_\gamma (\bar y,\bar u)\geq \lim_{k \arrow \infty} J_\gamma (y_k,u_k)= \inf J_\gamma (y,u)$. Thus $(\bar y,\bar u)$ is a solution for \eqref{e:OCP_c}.
	\end{proof}

	\subsection{Optimality conditions via DC--programming} DC--functions consist of those functions represented by the difference of two convex functions. This class of functions originated the DC--programming theory, which is well known in nonconvex optimization cf. \cite{hiriart1989}. A \emph{difference--of--convex functions} formulation for elliptic optimal control problems involving $L^q$--quasinorms was applied in \cite{merino2019}. This formulation can be conveniently analyzed in the framework of DC programming in order to derive optimality conditions in the form of a KKT system. By replacing this expression in the objective function and incorporating the indicator function ${I}_{U_{ad}}$ for the admissible control set, we get the following reduced problem:
	\begin{equation}
		\tag{$P'$} \label{eq:OCP'}
		\displaystyle\min_{u \in BV(\om)} J_\gamma(u):=~\frac{1}{2}\| Su + y_f-y_d \|^2_{L^2(\om)}+\frac{\alpha}{2}\|u\|^2_{L^2(\om)}+\beta \fsparse_{p,\gamma}(u) + {I}_{U_{ad}}.
	\end{equation}
	Let, $$F(u) := \frac12 \norm{Su +Sf-y_d}^2_{L^2(\om)} + \frac{\alpha}{2} \norm{u}_{L^2(\om)}^2.$$ Then, taking into account that $U_{ad}\subset BV(\om)\cap L^\infty (\om)$, the DC-formulation of $J_\gamma$ is defined by expressing $J_\gamma = G-H$, where 
	\begin{align}\label{eq:GH}
		\begin{array}{ll}
			&\begin{array}{lrlll}
				G: & L^2(\om) & \arrow &\reals \\
				& u		& \mapsto &  G(u) &: = F(u) + \beta \delta_\gamma \norm{u}_{L^1(\om)}+{I}_{U_{ad}}, \quad \text{ with } \delta_\gamma := q^q \gamma^{1-q}
			\end{array} \\
			&\begin{array}{lrll}
				H: & L^2(\om) & \arrow &\reals \\
				& u		& \mapsto &  H(u) : =  \beta \l( \delta_\gamma \norm{u}_{L^1(\om)} - \fsparse_{p,\gamma}(u) \r),
			\end{array}
		\end{array}
	\end{align}
	therefore, \eqref{eq:OCP'} admits the formulation 
	\begin{equation}
		\label{e:OPT-DC}\tag{DC}
		\displaystyle\min_{u} J_\gamma(u)= G(u) - H(u).
	\end{equation} 
	This representation permits the application of DC programming theory, see \cite{hiriart1989}. In fact, a local solution of the  problem \eqref{e:OPT-DC} must satisfy the so-called \textit{DC--criticality}: 
	\begin{equation*}
		\partial H(\bar u) \subset \partial G( \bar u),
	\end{equation*}	
	from which the associated optimality system is derived, see \cite[Theorem 6]{merino2019}. We summarize these conditions in the following proposition. 
	\begin{proposition}\label{t:fonc} Let $\bar u \in U_{ad}$ be a solution of \eqref{e:OCP_c}, then there exist:  $\bar y = S \bar u+y_f$ in $H_0^1(\om)$, with $y_f=Sf$  
		an adjoint state $\phi \in H_0^1(\om)\cap L^\infty(\om)$, a multiplier $\bar \zeta $ and $\bar w$ in $\in L^\infty(\om)$ such that the following optimality system is satisfied: 
		\begin{subequations}
			\label{eq:OPT_P'}
			\begin{align}
				& \begin{array}{rll}
					A \bar y &= \bar u +f,  & \text{in } \om, \\
					\bar y & = 0, & \text{on } \ga, 	
				\end{array}\label{eq:state1}\\
				& \begin{array}{rll}
					A^* \bar \phi &= \bar y - y_d,  & \text{in } \om, \\
					\bar \phi & = 0, & \text{on } \ga, 	
				\end{array} \label{eq:adj1}\\
				%
				& (\bar \phi + \beta \, (\delta_\gamma \,\bar \zeta -\bar w) +\alpha \bar u,   u -\bar u  )_{L^2(\om)}  \geq 0, \, \forall u \in U_{ad}	\label{eq:vi}\\
				& \begin{array}{lll}
					\bar\zeta(x) &=1, & \text{ if } \bar u (x) >0, \\
					\bar\zeta(x) &=-1, & \text{ if } \bar u (x) <0, \\
					|\bar\zeta(x)| &\leq 1,  & \text{ if } \bar u (x) =0, \\ 
				\end{array} \label{eq:zeta} 
				\, \text{and}\\ 
				&  \bar w (x): =
				\left\{ \begin{array}{ll}
					\ds \l[ \delta_\gamma - q \l( | \bar u(x)| + \frac{q-1}{\gamma} \r)^{q-1} \r]\sign ( \bar u (x)),& \hbox{if } |\bar u(x)| > \frac{1}{\gamma},\vspace{2mm}\\
					0,&   \text{ otherwise, } 
				\end{array} \right. \label{eq:w}\\
				&\text{ for almost all } x \in \om. \nonumber
			\end{align}
		\end{subequations}
		Moreover, there exist $\lambda_a$ and $\lambda_b$ in $L^2(\om)$ such that the last optimality system can be written as a KKT optimality system:
		\begin{subequations}
			\label{eq:OTP_P'c}
			\begin{align}
				& \begin{array}{rll}
					A \bar y &= \bar u +f  & \text{in } \om, \\
					\bar y & = 0 & \text{on } \ga, 	
				\end{array}\label{eq:state1kkt}\\
				& \begin{array}{rll}
					A^* \bar \phi &= \bar y - y_d  & \text{in } \om, \\
					\bar \phi & = 0 & \text{on } \ga, 	
				\end{array} \label{eq:adj1kkt}\\
				%
				& \bar \phi +\alpha \bar u + \beta \, (\delta_\gamma \, \bar\zeta -\bar w ) + \lambda_b - \lambda_a =0	\label{eq:gradientkkt}\\
				& \begin{array}{ll}
					\lambda_a \geq 0, & \lambda_b \geq 0,  x\\
					\lambda_a (\bar u -u_a) =0, & \lambda_b (u_b-\bar u) =0, \label{eq:compl2}
				\end{array}\\
				& \begin{array}{lll}
					\bar\zeta(x) &=1 & \text{ if } \bar u (x) >0, \\
					\bar\zeta(x) &=-1 & \text{ if } \bar u (x) <0, \\
					|\bar\zeta(x)| &\leq 1  & \text{ if } \bar u (x) =0,
				\end{array}
			\end{align}
		\end{subequations}
		with $\bar w$ given by \eqref{eq:w}.	
	\end{proposition}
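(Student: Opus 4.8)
The plan is to exploit the difference--of--convex decomposition $J_\gamma=G-H$ of \eqref{eq:GH} and to impose the necessary optimality condition $0\in\partial(G-H)(\bar u)$, reproducing the argument of \cite[Theorem~6]{merino2019} in the present $BV$ setting. The structural observation that makes this work is that, under Assumption~\ref{a:cotaDu}, one has $U_{ad}\cap BV(\om)=U_{ad}$, so that minimizing $\mathcal J_\gamma$ over $U_{ad}\cap BV(\om)$ is the same as minimizing $G-H$ over all of $L^2(\om)$ (the constraint being already absorbed into $G$ through $I_{U_{ad}}$), and every optimality relation below can be read off in $L^2(\om)$.

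\emph{Step 1 (properties of $G$ and $H$).} First I would check that $G$ is proper, convex and lower semicontinuous on $L^2(\om)$: $F$ is a continuous convex quadratic (since $S$ is linear and $\alpha>0$), $u\mapsto\beta\delta_\gamma\norm{u}_{L^1(\om)}$ is convex and continuous, and $I_{U_{ad}}$ is the indicator of a closed convex set. For $H$ the only issue is the scalar profile $t\mapsto\delta_\gamma|t|-h_{q,\gamma}(t)^q$, which vanishes on $[-\tfrac{1}{\gamma},\tfrac{1}{\gamma}]$ and equals $\delta_\gamma|t|-(|t|-\tfrac{1-q}{\gamma})^q$ for $|t|>\tfrac{1}{\gamma}$; a direct computation shows this profile is $C^1$, with zero derivative at $|t|=\tfrac{1}{\gamma}$ and derivative $\bigl(\delta_\gamma-q(|t|-\tfrac{1-q}{\gamma})^{q-1}\bigr)\sign(t)$ otherwise, the bracket lying in $[0,\delta_\gamma)$; hence the profile is convex with derivative bounded by $\delta_\gamma$. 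Consequently the associated Nemytskii operator is G\^ateaux differentiable on $L^2(\om)$, $H$ is convex and differentiable with $H'(\bar u)=\beta\bar w$ where $\bar w$ is exactly the function in \eqref{eq:w}, and $\norm{\bar w}_{L^\infty(\om)}\le\delta_\gamma$.

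\emph{Step 2 (necessary condition and $\partial G$).} Since $\bar u$ solves \eqref{e:OCP_c} and $H$ is differentiable, the DC--criticality condition (cf.\ \cite{hiriart1989}) together with $\partial(G-H)(\bar u)=\partial G(\bar u)-H'(\bar u)$ yields $\beta\bar w=H'(\bar u)\in\partial G(\bar u)$. Because $F$ is finite and continuous everywhere, the subdifferential sum rule gives
\begin{equation*}
\partial G(\bar u)=\nabla F(\bar u)+\beta\delta_\gamma\,\partial\norm{\cdot}_{L^1(\om)}(\bar u)+N_{U_{ad}}(\bar u),
\end{equation*}
with $\nabla F(\bar u)=S^{*}(S\bar u+y_f-y_d)+\alpha\bar u=\bar\phi+\alpha\bar u$, where $\bar\phi$ solves the adjoint equation \eqref{eq:adj1} and lies in $H_0^1(\om)\cap L^\infty(\om)$ by elliptic regularity and the $L^2\to L^\infty$ mapping property of $S^{*}$; $\partial\norm{\cdot}_{L^1(\om)}(\bar u)$ is the set of $\bar\zeta\in L^\infty(\om)$ with $|\bar\zeta|\le1$ a.e.\ and $\bar\zeta=\sign(\bar u)$ on $\{\bar u\ne0\}$, i.e.\ \eqref{eq:zeta}; and $N_{U_{ad}}(\bar u)$ is the normal cone of $U_{ad}$ at $\bar u$.

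\emph{Step 3 (assembling the systems, and the main obstacle).} Combining Steps~1--2 there are $\bar\zeta$ as in \eqref{eq:zeta} and $\nu\in N_{U_{ad}}(\bar u)$ with $\beta\bar w=\bar\phi+\alpha\bar u+\beta\delta_\gamma\bar\zeta+\nu$, hence $-\bigl(\bar\phi+\alpha\bar u+\beta(\delta_\gamma\bar\zeta-\bar w)\bigr)=\nu\in N_{U_{ad}}(\bar u)$; unwinding the definition of the normal cone yields the variational inequality \eqref{eq:vi}, which together with \eqref{eq:state1}, \eqref{eq:adj1}, \eqref{eq:zeta} and \eqref{eq:w} is the system \eqref{eq:OPT_P'}. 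Finally, since $U_{ad}$ is the box $\{u_a\le u\le u_b\}$, the element $\nu\in L^\infty(\om)$ splits pointwise as $\nu=\lambda_b-\lambda_a$ with $\lambda_a,\lambda_b\in L^2(\om)$, $\lambda_a,\lambda_b\ge0$, $\lambda_a(\bar u-u_a)=0$ and $\lambda_b(u_b-\bar u)=0$; inserting this into the identity above produces \eqref{eq:gradientkkt} and hence the KKT system \eqref{eq:OTP_P'c}. The one genuinely delicate point is the rigorous G\^ateaux differentiability of $H$ on $L^2(\om)$ and the convexity of the scalar profile $t\mapsto\delta_\gamma|t|-h_{q,\gamma}(t)^q$; both are already established in \cite{merino2019}, so the remaining effort is essentially bookkeeping, namely translating everything into the box--constrained $L^2$ setting and checking that restricting the feasible set from $L^2(\om)$ to $U_{ad}\cap BV(\om)=U_{ad}$ loses nothing.
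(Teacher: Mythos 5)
Your proposal is correct and follows essentially the route the paper relies on (via its citation of \cite[Theorem 6]{merino2019}): the DC--criticality condition $\partial H(\bar u)\subset\partial G(\bar u)$ for the splitting \eqref{eq:GH}, with $H$ convex and G\^ateaux differentiable so that $H'(\bar u)=\beta\bar w\in\partial G(\bar u)$, followed by the Moreau--Rockafellar sum rule and the pointwise splitting of the normal cone to the box into $\lambda_b-\lambda_a$. Your verification of the scalar profile (vanishing on $[-\tfrac1\gamma,\tfrac1\gamma]$, $C^1$ matching at the kink, monotone derivative bounded by $\delta_\gamma$) and the identification $U_{ad}\subset BV(\om)\cap L^\infty(\om)$ are exactly the ingredients the paper takes from \cite{merino2019}, so no gap remains.
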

	
	The presence of $\bar w$ in this optimality system characterizes the jumps occurring in the solutions. Analyzing this term is crucial for the numerical approximation of \eqref{e:OCP_c}. Note that $w$ represents the superposition operator $u \mapsto w$, defined by $w(x) = j(u(x))$ for almost all $x \in \om$, where $j:\reals \mapsto \reals$, is defined by
	\begin{equation}\label{eq:j}
		j(t)=\l[\delta_\gamma - q\l( |t| + \frac{q-1}{\gamma} \r)^{q-1}\r]\sign(t), \text{ if } |t|>\frac{1}{\gamma},
	\end{equation}
	and  by $0$, otherwise. Also, it satisfies $|j(t)|\leq \delta_{\gamma}$ for all $t$.
	
	\begin{lemma}\label{l:phi_W1p} The \textit{adjoint state} $\bar\phi$, solution of the adjoint equation \eqref{eq:adj1kkt}, satisfies
		\begin{enumerate}[(i)]
			\item $\norm{\bar \phi}_{L^\infty(\om)} +  \norm{\bar \phi}_{H_0^1(\om)} \leq c \norm{ \bar y -y_d}_{L^2(\om)}$, for some positive constant $c$.
			\item There exists $p>2$, such that $\bar \phi\in W_0^{1,p}(\om)$.
		\end{enumerate}
	\end{lemma}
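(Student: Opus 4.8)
The plan is to establish both parts of Lemma~\ref{l:phi_W1p} by treating the adjoint equation \eqref{eq:adj1kkt} as a linear elliptic boundary value problem with right-hand side $\bar y - y_d$ and invoking the regularity properties of the solution operator already recorded in the excerpt.

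\textbf{Part (i).} First I would note that $\bar\phi = S^*(\bar y - y_d)$, where $S^*$ is the adjoint of the solution operator $S$. Since $y_d \in L^p(\om)$ with $p > n = 2$ and $\bar y \in H_0^1(\om) \hookrightarrow L^2(\om)$ (indeed $\bar y \in L^\infty(\om)$ because $\bar u + f \in L^2(\om)$ and $S : L^2(\om) \to L^\infty(\om)$ is continuous), the datum $\bar y - y_d$ lies in $L^2(\om)$. Then the $H_0^1$-bound follows directly from the Lax--Milgram estimate \eqref{eq:state} applied to the adjoint problem: $\norm{\bar\phi}_{H_0^1(\om)} \le c_a \norm{\bar y - y_d}_{L^2(\om)}$. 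For the $L^\infty$-bound I would use the stated continuity of $S^*$ from $L^2(\om)$ to $L^\infty(\om)$ (noted in the paragraph introducing $S$), giving $\norm{\bar\phi}_{L^\infty(\om)} \le c' \norm{\bar y - y_d}_{L^2(\om)}$. Adding the two estimates yields (i) with $c = c_a + c'$. This part is essentially bookkeeping on top of results already quoted.

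\textbf{Part (ii).} The claim that $\bar\phi \in W_0^{1,p}(\om)$ for some $p > 2$ is the substantive point. I would argue as follows: the right-hand side $\bar y - y_d$ is bounded in $L^r(\om)$ for some $r > 2$ — in fact $\bar y \in L^\infty(\om) \subset L^r(\om)$ for every $r$, and $y_d \in L^{p_0}(\om)$ with $p_0 > 2$ by hypothesis, so $\bar y - y_d \in L^{r_0}(\om)$ with $r_0 = p_0 > 2$. By elliptic $L^p$-regularity for the operator $A^*$ (which has the same structure \eqref{eq:A}, with $C(\bar\om)$ coefficients on a star-shaped, hence Lipschitz, domain), the solution of $A^*\bar\phi = g$, $\bar\phi|_\Gamma = 0$, with $g \in L^{r_0}(\om)$, belongs to $W^{2,r_0}(\om) \cap W_0^{1,r_0}(\om)$, and in particular to $W_0^{1,p}(\om)$ with $p = r_0 > 2$. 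Alternatively, if one wishes to avoid full $W^{2,r}$ theory on a merely Lipschitz domain, one can invoke the Meyers-type higher-integrability result: there exists $\varepsilon > 0$ depending only on the ellipticity constant $\sigma$ and the continuity modulus of the $a_{ij}$ such that $\nabla\bar\phi \in L^{2+\varepsilon}(\om)$ whenever the datum is in $L^2(\om)$, which already suffices. I would state this explicitly and take $p = 2 + \varepsilon$ (or $p = \min\{r_0, 2+\varepsilon\}$ to be safe).

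\textbf{Main obstacle.} The only delicate issue is the regularity of the domain: $\om$ is assumed star-shaped (hence Lipschitz) but not smooth or convex, so global $W^{2,p}$ elliptic regularity with $p$ large is not automatic. The clean way around this is to rely on the Meyers higher-integrability estimate, which holds on arbitrary bounded domains and gives $W^{1,2+\varepsilon}$ for free; this is all that is needed for the $h^{1/2}$ error analysis later, so I would present the proof of (ii) in that form rather than chasing the optimal exponent. The rest — identifying $\bar\phi = S^*(\bar y-y_d)$, verifying $\bar y - y_d \in L^2(\om)$, and assembling the bounds — is routine and would occupy only a few lines.
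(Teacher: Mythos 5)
Your proposal is correct and follows essentially the same route as the paper: part (i) is the standard $L^\infty$/$H_0^1$ elliptic estimate for $L^2$ data (the paper cites Stampacchia, you cite the stated continuity of $S^*$ from $L^2(\om)$ to $L^\infty(\om)$, which amounts to the same thing), and for part (ii) the paper views $\bar y - y_d$ as an element of $W^{-1,p}(\om)$ for some $p>2$ and invokes a Meyers/Gr\"oger-type $W^{1,p}$-regularity theorem (\cite[Theorem 2.1.2]{mateos_phd}), which is precisely the higher-integrability argument you propose as your preferred alternative to full $W^{2,p}$ regularity. Your remark that $W^{2,p}$ theory is not available on a merely star-shaped (Lipschitz) domain, and that the Meyers-type $W^{1,2+\varepsilon}$ estimate suffices, is exactly the point the paper's citation is designed to handle.
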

	\begin{proof}
		(i) is a consequence of standard elliptic regularity since $\bar y - y_d \in L^p(\om)$, see \cite{stampacchia65}. On the other hand, notice that by the embedding $H_0^1(\om) \hookrightarrow L^p(\om)$ with $p>2$ implies that $\bar y - y_d \in W^{-1,p}(\om)$. Therefore, owing to \cite[Theorem 2.1.2]{mateos_phd} the adjoint state $\bar \phi$ belongs to $W_0^{1,p}(\om)$ for some $p>2$. Hence, (ii) follows.
	\end{proof}

	\begin{lemma}\label{l:w_prop} Let $j$ be the function defined in \eqref{eq:j}.  The following properties are satisfied:
		\begin{enumerate}[(a)]
			\item $|j(t)| \leq \delta_\gamma$, for all $t \in\reals$.
			\item $j(\cdot)$ is Lipschitz continuous with Lipschitz constant $L_j=2\gamma\delta_\gamma\frac{1}{q}$, i.e.
			$$|j(t_1)-j(t_2)| \leq  L_j |t_1-t_2|, $$ for all $t_1,t_2$ in $\reals$.
		\end{enumerate}
	\end{lemma}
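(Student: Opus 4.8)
The plan is to establish each of the two properties by a direct analysis of the scalar function $j$ defined in \eqref{eq:j}, treating separately the regions $\{|t|\le 1/\gamma\}$ and $\{|t|>1/\gamma\}$. First I would handle (a). On $\{|t|\le 1/\gamma\}$ there is nothing to prove since $j(t)=0$ there. On $\{|t|>1/\gamma\}$, writing $s=|t|+\tfrac{q-1}{\gamma}>0$ (note $s>0$ precisely because $|t|>1/\gamma$ and $q\in(0,1)$), we have $|j(t)|=|\delta_\gamma - q s^{q-1}|$. Since $q-1<0$ the map $s\mapsto s^{q-1}$ is positive and decreasing, so $q s^{q-1}$ ranges over $(0,\delta_\gamma)$ as $|t|$ ranges over $(1/\gamma,\infty)$: indeed at $|t|=1/\gamma$ one gets $s=1/\gamma - (1-q)/\gamma = q/\gamma$, hence $q s^{q-1}= q (q/\gamma)^{q-1}= q^q\gamma^{1-q}=\delta_\gamma$, and $q s^{q-1}\to 0$ as $s\to\infty$. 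Therefore $\delta_\gamma - q s^{q-1}\in[0,\delta_\gamma)$, which gives $|j(t)|<\delta_\gamma$ on this region, and (a) follows with the non-strict bound stated.

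Next I would prove the Lipschitz estimate (b). By oddness of $j$ it suffices to bound the derivative $j'$ on $(1/\gamma,\infty)$ and to check continuity at the matching point $t=1/\gamma$ (continuity there is immediate from the computation above, since $j(1/\gamma^+)=\delta_\gamma-\delta_\gamma=0=j(1/\gamma)$, so no jump is introduced and a bound on $j'$ off the gluing points plus continuity yields the global Lipschitz constant). For $t>1/\gamma$ we compute $j'(t) = -q(q-1)\,s^{q-2} = q(1-q)\,s^{q-2}$ with $s = t+\tfrac{q-1}{\gamma}$. Since $q-2<0$ and $s\ge q/\gamma$ on this region, $s^{q-2}\le (q/\gamma)^{q-2}=(q/\gamma)^{q-1}\cdot(\gamma/q)$, so
\[
|j'(t)| = q(1-q)s^{q-2} \le q(1-q)\,(q/\gamma)^{q-1}\,\frac{\gamma}{q} = (1-q)\,\gamma\,(q/\gamma)^{q-1}\cdot q^{0}.
\]
Recalling $(q/\gamma)^{q-1}=q^{q-1}\gamma^{1-q}$ and $\delta_\gamma=q^q\gamma^{1-q}$, this simplifies to $|j'(t)|\le (1-q)\gamma\, q^{q-1}\gamma^{1-q} = (1-q)\gamma\,\delta_\gamma/q \le 2\gamma\delta_\gamma/q = L_j$ since $0<1-q<1<2$. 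Combining the zero region, the interior derivative bound, and continuity at $\pm1/\gamma$, the mean value theorem gives $|j(t_1)-j(t_2)|\le L_j|t_1-t_2|$ for all $t_1,t_2\in\reals$.

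The only genuinely delicate point is the behavior at the gluing points $t=\pm1/\gamma$: one must verify that $j$ is continuous there so that a piecewise bound on $j'$ propagates to a global Lipschitz bound, and one should note that $j'$ itself blows up as $s^{q-2}\to\infty$ only as $t\downarrow 1/\gamma$ would suggest — but in fact $s$ stays bounded below by $q/\gamma>0$ on the whole region $\{|t|>1/\gamma\}$, so no blow-up occurs and the supremum of $|j'|$ is attained (as a limit) at the gluing point, yielding exactly the constant $(1-q)\gamma\delta_\gamma/q$. The stated constant $L_j=2\gamma\delta_\gamma/q$ is then a convenient (non-optimal) upper bound, and the slack by the factor $2/(1-q)$ causes no difficulty. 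This completes the proof.
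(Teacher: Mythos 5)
Your proof is correct, but it follows a genuinely different route from the paper. The paper establishes (b) by an exhaustive case analysis on the positions of $t_1,t_2$ relative to $\pm\tfrac1\gamma$ (both in the dead zone, both on the same branch, one in each region, opposite branches), applying the mean value theorem directly to the difference $j(t_1)-j(t_2)$ in each case; the factor $2$ in $L_j=2\gamma\delta_\gamma\tfrac1q$ is produced precisely in the opposite-branch case, where the triangle inequality bounds each of the two distances to the gluing points separately. You instead bound the derivative uniformly on the smooth branches, $|j'(t)|=q(1-q)\bigl(|t|+\tfrac{q-1}{\gamma}\bigr)^{q-2}\le (1-q)\gamma\delta_\gamma/q$ since $|t|+\tfrac{q-1}{\gamma}\ge q/\gamma$ there, check continuity of $j$ at $\pm\tfrac1\gamma$ (via $q(q/\gamma)^{q-1}=\delta_\gamma$), and propagate the piecewise bound to a global Lipschitz estimate. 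This avoids the case analysis entirely, yields the sharper constant $(1-q)\gamma\delta_\gamma/q$ (showing the paper's factor $2$ is slack from its triangle-inequality step), and also supplies an actual argument for (a) — monotonicity of $s\mapsto qs^{q-1}$ together with its value $\delta_\gamma$ at $s=q/\gamma$ — where the paper only asserts it. One small point of rigor: when $t_1,t_2$ straddle the gluing points, the mean value theorem cannot be applied on $[t_1,t_2]$ directly; one should split the interval at $\pm\tfrac1\gamma$, apply the derivative bound on each subinterval, and telescope using the continuity you verified. You flag exactly this as the delicate point, so it is a matter of phrasing rather than a gap; both proofs are sound, yours being shorter and sharper, the paper's being self-contained at the level of elementary case-by-case estimates.
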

	
	\begin{proof} The first property is a consequence of the definition of $j$ for the case $|t|>\frac{1}{\gamma}$.
		%
		
		Regarding the second property, we analyze the following cases: 
		
		\begin{enumerate}[i)]
			\item  $|t_1|\le\frac{1}{\gamma}$ and $|t_2|\le\frac{1}{\gamma}$,  the result follows without difficulties by the definition of $j$.
			\item \label{22cs} $t_1>\frac{1}{\gamma}$ and $t_2>\frac{1}{\gamma}$ (similarly for $t_1<-\frac{1}{\gamma}$ and $t_2<-\frac{1}{\gamma}$ ), by using the mean value theorem, we get
			\begin{align*}
				|j(t)-j(t_2)|&
				=
				q\left|\left(t_1+\frac{q-1}{\gamma}\right)^{q-1}-\left(t_2+\frac{q-1}{\gamma}\right)^{q-1}\right| \\
				&\le 
				(1-q)q\sup_{r>\frac{1}{\gamma}} \left\{ \left(r+\frac{q-1}{\gamma}\right)^{q-2}\right\}\, |t_1-t_2|
				\\
				%
				&
				\leq
				q^{-1}\gamma\, q^{q}\gamma^{1-q} \, |t_1-t_2|
				\\
				&
				=
				q^{-1}\gamma\delta_{\gamma} \, |t_1-t_2|.
			\end{align*}
			\item \label{33cs}  $|t_1|\le\frac{1}{\gamma}$ and $|t_2|>\frac{1}{\gamma}$ (similarly for $|t_2|\leq\frac{1}{\gamma}$ and $|t_1|>\frac{1}{\gamma}$), we have that $j(t_1)=0$, which implies
			\begin{align*}
				|j(t_1)-j(t_2)|&
				=
				q\left|\left(\frac{q}{\gamma }\right)^{q-1}-\left(t_2+\frac{q-1}{\gamma}\right)^{q-1}\right|,
			\end{align*}
			by applying  the mean value theorem, we get  $ |j(t_1)-j(t_2)| \leq \frac{1}{q} \gamma \delta_\gamma |\frac{1}{\lambda}-t_2|$. Then, using our assumption $0<t_2-1/\lambda\le t_2-t_1$ we obtain	
			\begin{align*}
				|j(t_1)-j(t_2)|
				&
				\le
				\frac{1}{q} \gamma \delta_\gamma, \left|t_2-t_1\right|.
			\end{align*}
			\item \label{44cs} $t_1<-\frac{1}{\gamma}$ and $t_2>\frac{1}{\gamma}$ (similarly for $t_2<-\frac{1}{\gamma}$ and $t_1>\frac{1}{\gamma}$), we have  
			\begin{align*}
				|j(t_1)-j(t_2)|&
				=
				\left|\delta_{\gamma}-q\left(-t_1+\frac{q-1}{\gamma}\right)^{q-1}+\delta_{\gamma}-q\left(t_2+\frac{q-1}{\gamma}\right)^{q-1}\right|
				\\
				&
				\le
				\left|\delta_{\gamma}-q\left(-t_1+\frac{q-1}{\gamma}\right)^{q-1}\right|+\left|\delta_{\gamma}-q\left(t_2+\frac{q-1}{\gamma}\right)^{q-1}\right|
				\\
				&
				=
				q\left|\left(\frac{q}{\gamma }\right)^{q-1}-\left(-t_1+\frac{q-1}{\gamma}\right)^{q-1}\right|
				+ q\left|\left(\frac{q}{\gamma }\right)^{q-1}-\left(t_2+\frac{q-1}{\gamma}\right)^{q-1}\right|,
			\end{align*}
			analogously to the previous cases, the mean value theorem implies that	\begin{align*}
				|j(t_1)-j(t_2)|
				&\le 
				q(1-q)\sup_{|r|>\frac{1}{\gamma}} \left\{ \left(r+\frac{q-1}{\gamma}\right)^{q-2}\right\}\,\left( \left|t_1+\frac{1}{\lambda}\right|+ \left|t_2-\frac{1}{\lambda}\right|\right)
				\\
				&
				\le
				\gamma\delta_{\gamma} \frac{1}{q}\,\left( \left|t_1+\frac{1}{\lambda}\right|+ \left|t_2-\frac{1}{\lambda}\right|\right).
			\end{align*}
			On another hand, since $-1/\lambda<t_2 $ then $-1/\lambda-t_1<t_2-t$. Moreover, since $t_1<1/\lambda$, it follows that $t_2-1/\lambda< t_2-t$. 
			Therefore,
			\begin{align*}
				|j(t_1)-j(t_2)|
				&
				\le
				2\gamma\delta_{\gamma} \frac{1}{q}\, \left|t_2-t_1\right|.
			\end{align*}
		\end{enumerate}
		Thus, we conclude that $j$ is Lipschitz continuous with constant  $2\gamma\delta_{\gamma} \frac{1}{q}$.
	\end{proof}
	\begin{remark}\label{r:auxiliar} For a given function $g$ in $L^2(\om)$, let us consider the following auxiliary $L^1$--sparse optimal control problem: 
		\begin{equation}
			\label{e:OCPL1}
			\begin{cases}
				\displaystyle\min_{(y,u)} J_g(y,u):= ~\frac{1}{2}\| y-y_d \|^2_{L^2(\om)}+\frac{\alpha}{2}\|u\|^2_{L^2(\om)} - \beta\int_{\om} g \, u \, dx + \beta \delta_\gamma \| u \|_{L^1(\om)}\\
				\hbox{ subject to: }\\
				u \in U_{ad} \qquad\text{  and}
				\hspace{20pt}\begin{array}{rll}
					A y=&u + f, &\hbox{in  } \om, \\
					y=&0, &\hbox{on  }  \Gamma.
				\end{array}
			\end{cases}
		\end{equation}
		
		Let $\bar u$ satisfying the optimality system \eqref{eq:OPT_P'} and fix $g=\bar w = j(\bar u)$ in  \eqref{e:OCPL1}. Interestingly, \eqref{eq:OPT_P'} corresponds to the optimality system of the  optimal control problem \eqref{e:OCPL1}, taking $g=\bar w$ defined in \eqref{eq:w}, see \cite{stadler06}.
		Therefore, a local optimal control $\bar u$ for \eqref{e:OCP_c} is also a solution of the problem \eqref{e:OCPL1}. In other words, for a control $\bar u$ satisfying first--order optimality conditions, an auxiliary convex $L^1$ sparse optimal control problem can be associated for which, $\bar u $ is its unique solution if $\alpha>0$, see \cite{stadler06}. 
		
		This observation provides several tools, developed for $L^1$--sparse optimal control problems,  that will serve in the numerical approximation of problem \eqref{e:OCP_c}. Notice, however, the presence of the function $\bar w$ in the optimality system, c.f. \eqref{eq:w}. This function  depends on $\bar u$, which directly affects the overall numerical approximation. The analysis of this element will be crucial for obtaining error estimates in the forthcoming sections.	
	\end{remark}

	Besides the characterization provided by optimality system \eqref{eq:OPT_P'} for an optimal control $\bar u$,  additional valuable information about the optimal control's structure regarding its sparsity properties can be obtained. The lower bound, estimated in the following Proposition, was also proved in \cite[Remark 3 (ii)]{merino2019} for the unconstrained case. Its extension to the constrained case is direct. 

	We apply the maximum principle established in \cite{itoku2008} to problem \eqref{eq:OCP'}. This leads to the following result. 
	\begin{theorem}\label{p:u_softt}
		Let $\bar{u}$ be the optimal control of the problem $(P_\gamma)$ and let $\bar{\phi} \in H_0^1(\om) \cap L^\infty(\om)$ be the associated adjoint state. Then, 
		\begin{enumerate}[(i)]   
			\item   For almost all $x\in\om$,  $\bar u(x)$ satisfies
			\begin{equation*}
				\bar u(x) \in \underset{u \in [u_a,u_b]}{\argmin} \{ (\bar \phi (x) - \beta \bar w(x)) u + \frac{\alpha}{2}u^2 + \beta\delta_\gamma |u| \}.
			\end{equation*}
			%
			\item For almost all $x\in \om$, the value $\bar u(x)$ is characterized by 
			\begin{equation*} 
				\bar u(x) = 
				\begin{cases}
					0, &  \text{if } |\beta\bar w (x)-\bar\phi(x)| \leq \beta\delta_\gamma,\\
					P_{[u_a,u_b]}\l( \frac{\beta \bar w(x) -\bar \phi(x)}{\alpha}- \frac{\beta\delta_\gamma}{\alpha} \sign (\beta \bar w(x) -\bar \phi(x))\r), & \text{otherwise.}
				\end{cases}
			\end{equation*}
			\item  Let $s^*= \Big(\frac{\beta}{\alpha} q(1-q)\Big)^{\frac{1}{2-q}}$, we have  for almost all $x$, $\bar u(x) \not=0$ such that $u_a <\bar u(x)<u_b$, there exists $\gamma_0$ such that the inequality:
			\begin{equation}\label{eq:u_lowup}
				s^*\Big(1+\frac{1}{1-q}\Big) + \frac{1-q}{\gamma}\leq |\bar u(x)| +  \frac{\beta q}{\alpha }  \Big(|\bar u(x)|+\frac{q-1}{\gamma}\Big)^{q-1} \leq \frac{1}{\alpha }\norm{\bar \phi}_{L^\infty(\om)},
			\end{equation}
			holds for all $\gamma>\gamma_0$.
			\item There exist $\rho>0$ and $\gamma_0>0$ such that for all $\gamma>\gamma_0$ we have that 
			the set
			\begin{equation*}
				\Omega_\rho := \{x\in\om : 0<|\bar u(x)| \leq \rho +\frac{1}{\gamma} \}	,
			\end{equation*}
			has zero measure. 
		\end{enumerate}
	\end{theorem}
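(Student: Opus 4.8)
The plan is to obtain all four items from the pointwise form of the optimality system of Proposition~\ref{t:fonc} together with a one–variable analysis of the scalar ``Hamiltonian'' $f_x(v):=\bar\phi(x)\,v+\tfrac{\alpha}{2}v^2+\beta\,h_{q,\gamma}(v)^q$ on $[u_a,u_b]$.

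\emph{Items (i) and (ii).} Since $U_{ad}$ is the pointwise box $[u_a,u_b]$, the variational inequality \eqref{eq:vi} together with \eqref{eq:zeta} is equivalent, for a.a.\ $x$, to the assertion that $\bar u(x)$ solves the \emph{strictly convex} scalar problem $\min_{v\in[u_a,u_b]}\{(\bar\phi(x)-\beta\bar w(x))v+\tfrac\alpha2 v^2+\beta\delta_\gamma|v|\}$; equivalently, the maximum principle of \cite{itoku2008} applied to \eqref{eq:OCP'} gives $\bar u(x)\in\argmin_{v\in[u_a,u_b]}f_x(v)$, and DC–criticality (convexity of $v\mapsto\delta_\gamma|v|-h_{q,\gamma}(v)^q$, whose subgradient at $\bar u(x)$ is $\bar w(x)=j(\bar u(x))$, cf.\ \eqref{eq:w}--\eqref{eq:j}) permits replacing $h_{q,\gamma}(\cdot)^q$ by its majorising linearisation, which yields (i). Item (ii) is then just the solution of the scalar problem in (i): as $\alpha>0$ there is a unique minimiser; the unconstrained minimiser is $0$ when $|\beta\bar w(x)-\bar\phi(x)|\le\beta\delta_\gamma$ and the soft–thresholded value $\tfrac1\alpha(\beta\bar w(x)-\bar\phi(x))-\tfrac{\beta\delta_\gamma}{\alpha}\sign(\beta\bar w(x)-\bar\phi(x))$ otherwise, and the constrained minimiser is its projection onto $[u_a,u_b]$.

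\emph{Items (iii) and (iv).} The core is the shape of $f_x$ for large $\gamma$. By Lemma~\ref{l:phi_W1p}(i) and $\bar u\in U_{ad}$, the norm $\norm{\bar\phi}_{L^\infty(\om)}$ is bounded by a constant \emph{independent of }$\gamma$, while $\beta\delta_\gamma=\beta q^q\gamma^{1-q}\to\infty$; hence there is $\gamma_1$ with $\beta\delta_\gamma>\norm{\bar\phi}_{L^\infty(\om)}$ for $\gamma>\gamma_1$. On $[0,\tfrac1\gamma]$ one has $h_{q,\gamma}(v)^q=\delta_\gamma v$, so $f_x'(v)=\bar\phi(x)+\beta\delta_\gamma+\alpha v>0$; thus $f_x$ is strictly increasing on $[0,\tfrac1\gamma]$ (and strictly decreasing on $[-\tfrac1\gamma,0]$), so a nonzero interior minimiser cannot lie in $[-\tfrac1\gamma,\tfrac1\gamma]$. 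For $v>\tfrac1\gamma$, $f_x''(v)=\alpha-\beta q(1-q)\l(v-\tfrac{1-q}{\gamma}\r)^{q-2}$, which is negative exactly for $v<s^*+\tfrac{1-q}{\gamma}$ and positive for $v>s^*+\tfrac{1-q}{\gamma}$, with $s^*=\l(\tfrac\beta\alpha q(1-q)\r)^{1/(2-q)}$; hence $f_x$ is strictly concave on $(\tfrac1\gamma,\,s^*+\tfrac{1-q}{\gamma})$ and strictly convex on $(s^*+\tfrac{1-q}{\gamma},\infty)$. A strictly concave function has no interior local minimum, so any interior minimiser of $f_x$ on $(0,u_b)$ satisfies $v\ge s^*+\tfrac{1-q}{\gamma}$; combining with the symmetric statement, for $\gamma>\gamma_1$ we get: if $\bar u(x)\neq0$ and $u_a<\bar u(x)<u_b$ then $|\bar u(x)|\ge s^*+\tfrac{1-q}{\gamma}>\tfrac1\gamma$. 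This gives (iv) at once: choosing $\rho:=\tfrac12\min\{s^*,u_b,-u_a\}$ and $\gamma_0\ge\gamma_1$ so large that $\rho+\tfrac1\gamma<\min\{s^*+\tfrac{1-q}{\gamma},u_b,-u_a\}$ for all $\gamma>\gamma_0$, any $x$ with $0<|\bar u(x)|\le\rho+\tfrac1\gamma$ would be an interior point and hence satisfy $|\bar u(x)|\ge s^*+\tfrac{1-q}{\gamma}>\rho+\tfrac1\gamma$, a contradiction, so $\Omega_\rho$ is a null set. For (iii), at an interior point with $\bar u(x)\neq0$ we may now use $|\bar u(x)|>\tfrac1\gamma$: there $\bar\zeta(x)=\sign(\bar u(x))$, the box multipliers vanish, and substituting \eqref{eq:w} into \eqref{eq:gradientkkt} the $\beta\delta_\gamma$–terms cancel, leaving $\bar\phi(x)+\alpha\bar u(x)+\beta q\l(|\bar u(x)|+\tfrac{q-1}{\gamma}\r)^{q-1}\sign(\bar u(x))=0$. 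Since $|\bar u(x)|+\tfrac{q-1}{\gamma}>0$, the last two terms carry the sign of $\bar u(x)$, so taking absolute values and dividing by $\alpha$ makes the middle expression in \eqref{eq:u_lowup} equal to $\tfrac{|\bar\phi(x)|}{\alpha}\le\tfrac1\alpha\norm{\bar\phi}_{L^\infty(\om)}$, which is the upper bound. With $s:=|\bar u(x)|-\tfrac{1-q}{\gamma}\in(\tfrac q\gamma,\infty)$, that expression equals $\tfrac{1-q}{\gamma}+g(s)$, where $g(s):=s+\tfrac{\beta q}{\alpha}s^{q-1}$ is strictly convex on $(0,\infty)$ with $g'(s^*)=0$ and, using $(s^*)^{q-2}=\tfrac{\alpha}{\beta q(1-q)}$, $g(s^*)=s^*\l(1+\tfrac1{1-q}\r)$; since $g(s)\ge g(s^*)$ this is exactly the lower bound in \eqref{eq:u_lowup}.

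\emph{Main obstacle.} The delicate point is the concave–then–convex description of $f_x$ on $(\tfrac1\gamma,\infty)$ together with the strict monotonicity of $f_x$ on $[0,\tfrac1\gamma]$ for large $\gamma$ — the latter relying essentially on the $\gamma$–uniform $L^\infty$ bound for the adjoint state (Lemma~\ref{l:phi_W1p}) and on $\delta_\gamma\to\infty$. Once this is in place, a nonzero interior value of $\bar u(x)$ is forced onto the outer convex branch $[s^*+\tfrac{1-q}{\gamma},\infty)$, which simultaneously delivers the lower bound in (iii) and the ``forbidden interval'' of (iv); everything else reduces to scalar calculus and the pointwise reading of the optimality system.
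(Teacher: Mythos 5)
Your proposal is correct, and for items (i)--(iii) it follows essentially the paper's route: the pointwise maximum principle reduces everything to the scalar problem $\min_{v\in[u_a,u_b]}\{\bar\phi(x)v+\tfrac{\alpha}{2}v^2+\beta h_{q,\gamma}(v)^q\}$, DC criticality (equivalently, majorising the nonconvex penalty by its linearisation through $\bar w(x)=j(\bar u(x))$) gives (i), soft--thresholding plus projection gives (ii), and the critical point identity \eqref{eq:crit_point} together with minimising $s\mapsto \alpha s+\beta q\bigl(s+\tfrac{q-1}{\gamma}\bigr)^{q-1}$ (your $g$, the paper's $\eta$) gives \eqref{eq:u_lowup}. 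The points where you diverge are in presentation rather than substance, and they are sound: to rule out values $0<|\bar u(x)|\le\tfrac1\gamma$ you use strict monotonicity of the scalar objective on $[-\tfrac1\gamma,\tfrac1\gamma]$ for $\beta\delta_\gamma>\norm{\bar\phi}_{L^\infty(\om)}$, whereas the paper argues by contradiction from the KKT identity and the $\gamma$--uniform adjoint bound \eqref{eq:phi_bound} — same ingredients (Lemma \ref{l:phi_W1p} and $\delta_\gamma\to\infty$), different packaging; and for (iv), which the paper simply cites from \cite[Remark 3 (ii)]{merino2019}, you give a self--contained proof via the concave--then--convex structure of the objective on $(\tfrac1\gamma,\infty)$, which in fact yields the sharper interior threshold $|\bar u(x)|\ge s^*+\tfrac{1-q}{\gamma}$ of \eqref{eq:sup_bound} (cf.\ Remark \ref{r:u_2_sol}) and, after shrinking $\rho$ so that $\rho+\tfrac1\gamma<\min\{s^*,u_b,-u_a\}$, the null--measure statement for $\Omega_\rho$. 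This buys a proof of (iv) that does not lean on the external reference; the paper's version is shorter but less explicit about the threshold.
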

	
	\begin{proof}
		The proof follows from  \cite[Theorem 2.2]{itoku2014}. Indeed, by choosing $\ell(x,y(x))=\frac{1}{2}(y(x)-y_d)^2$, $h(v)=\frac{\alpha}{2}|v|^2+h_{q,\gamma}^q(v)$, $f(\cdot,y,u(\cdot))=u(\cdot)$, $\langle Ey,\cdot\rangle_{X^*,X}=a(y,\cdot)$ and $X=H_0^1(\om)$, we can verify the hypothesis of \cite[Theorem 2.2]{itoku2014} and conclude that for almost all $x \in \om$: 
		\begin{align*} 
			\bar{u}(x) \in &\underset{u\in [u_a,u_b]}{\argmin} \{\bar{\phi}(x) u+\frac{\alpha}{2} |u|^2+\beta h_{q,\gamma}^q(u)\}\\
			&=\underset{u}{\argmin} \{\bar{\phi}(x) u+\frac{\alpha}{2} |u|^2+\beta h_{q,\gamma}^q(u)+ I_{[u_a,u_b]}\}.
		\end{align*}
		Here, $I_{[u_a,u_b]}$ denotes the indicator function of the interval ${[u_a,u_b]}$. In terms of DC--programming \cite{hiriart1989}, it follows that      
		\begin{align}\label{eq:dc_mp}
			\bar{u}(x) \in \underset{u\in \reals}{\argmin} \{g(u) - h(u)\},
		\end{align}
		where $g(u):= \bar\phi (x) u +\frac{\alpha}{2} u^2 + \beta \delta_\gamma |u| + I_{[u_a,u_b]}$  and $h(u):= \beta h_{q,\gamma}^q(u) - \beta \delta_\gamma\, |u|$ are convex functions. Thus, by considering that  $h'(\bar u(x))=j(\bar u(x))=\bar w(x)$ and noticing that the  optimality condition for the problem \eqref{eq:dc_mp} is  given by $\partial h(\bar u(x)) \subset \partial g(\bar u(x))$, 
		we can find that $\bar u(x)$ fulfills: 
		\begin{equation}
			0 \in \bar \phi (x) - \beta \bar w (x)  + \alpha \bar u(x) + \beta \delta_\gamma \partial |\cdot| (\bar u(x))+\partial I_{[u_a,u_b]} (\bar u(x)), 
		\end{equation}
		which corresponds to the optimality condition of  problem formulated in \textit{(i)}. Characterization \textit{(ii)} follows by considering the \emph{soft--thresholding}  operator (\cite[Example 6.22]{claval2020}) of the convex function $\beta\delta_\gamma |\cdot|$, which is given by 
		\begin{equation}
			\text{prox}_{\beta\delta_\gamma |\cdot|}(y)= \underset{u}{\argmin} \left\{ \frac12 (y-u)^2 + \beta \delta_\gamma |u| \right\} 
			=
			\begin{cases}
				0, & \text{ if } \, |y|\leq \beta \delta_\gamma,\\
				y-\beta \delta_\gamma \sign(y), &  \text{ otherwise. } 
			\end{cases}
		\end{equation}
		Therefore, for the unconstrained case, we may express $\bar u(x)$ with the implicit formula
		\begin{equation}\label{eq:usoft}
			\bar u(x) = 
			\begin{cases}
				0, &  \text{if } |\beta \bar w(x)-\bar\phi(x)|\leq \beta\delta_\gamma,\\
				\frac{\beta \bar w(x) -\bar \phi(x)}{\alpha}- \frac{\beta\delta_\gamma}{\alpha} \sign (\beta \bar w(x) -\bar \phi(x)), & \text{if } |\beta \bar w(x)-\bar\phi(x)|> \beta\delta_\gamma .
			\end{cases}
		\end{equation}
		By considering the lower and upper bounds and complementarity conditions \eqref{eq:compl2}, we arrive to the formula in \textit{(ii)}.
		
		Now, let us deduce relation \eqref{eq:u_lowup}. First, let us consider the set $\om_0=\{x\in \om:0<|\bar u(x)|\leq \frac{1}{\gamma} \}$ and assume that its measure $|\om_0|$ is positive. In this set, by identity \eqref{eq:gradientkkt}, we have that $0=\alpha \bar u(x) + \beta \delta_\gamma\sign(\bar u(x)) -\beta\bar w(x)+ \bar \phi (x)$, implying that $ \norm{\bar \phi}_{L^\infty(\om)}>\beta\delta_\gamma$. 
		On other hand, from elliptic PDEs theory, there exist positive constants $c$ and $C$, such that 
		\begin{align}
			\norm{\bar \phi}_{L^\infty(\om)} \leq c \norm{\bar y - y_d}_{L^2(\om)}  \leq C \norm{y_f - y_d}_{L^2(\om)}, \label{eq:phi_bound}
		\end{align}
		which is a contraction due to the fact that  $\norm{\bar \phi}_{L^\infty(\om)}>\beta\delta_\gamma$ is growing to infinity as $\gamma\to \infty$. 
		

		If $u_b>\bar u(x)>\frac{1}{\gamma}$, we have $\bar u(x)$ minimizes $\frac{\alpha}{2} u^2  + \bar \phi(x) u + \beta (u + \frac{q-1}{\gamma})^q$, therefore $\bar u(x)$ satisfies the critical point condition
		\begin{equation}\label{eq:crit_point}
			{\alpha} \bar u(x)   + \beta q\Big(\bar u(x)  + \frac{q-1}{\gamma}\Big)^{q-1} =-\bar \phi(x).
		\end{equation}
		Since the real function $(\frac{1}{\gamma},+\infty)\ni  s \mapsto \eta(s):=\alpha s + {\beta q}(s + \frac{q-1}{\gamma})^{q-1} $ attains its minimum at $s^* +\frac{1-q}{\gamma}$, where $s^* =   \Big(\frac{\beta}{\alpha} q(1-q)\Big)^{\frac{1}{2-q}} $. Then, we have
		\begin{align*}
			\alpha s^*\Big(1+\frac{1}{1-q}\Big) + \alpha\frac{1-q}{\gamma}=\eta(s^* +&\frac{1-q}{\gamma})=\min_{s>0} \eta(s) \leq\eta (\bar u(x))\\
			&=\alpha \bar u(x) + {\beta q}\Big(\bar u(x) + \frac{q-1}{\gamma}\Big)^{q-1}=-\phi(x). \nonumber
		\end{align*}
		This is analogous for  $u_a<\bar u(x)<0$. Therefore, it follows that 
		\begin{equation}\label{eq:u_lower2}
			s^*\Big(1+\frac{1}{1-q}\Big) + \frac{1-q}{\gamma} \leq |\bar  u(x)| + \frac{\beta q}{\alpha}\Big(|\bar u(x)| + \frac{q-1}{\gamma}\Big)^{q-1}=\frac{1}{\alpha}|\phi(x)|\leq \frac{1}{\alpha}\norm{\bar \phi}_{L^\infty(\om)}. \nonumber
		\end{equation}
		Finally, the part \textit{(iv)} was proved in \cite[Remark 3 (ii)]{merino2019}.
	\end{proof}
	
	\begin{remark}\label{r:u_2_sol}
		The bound given in Theorem \ref{p:u_softt} (iv) is analogous to the bound obtained in \cite{dwachsmuth} for the $L^0$ penalizer. In contrast to the $L^0$ penalizer, we can not use the Hard--thresholding in our analysis, which gives an explicit value for the jump of the optimal control when changing from zero to nonzero values. In our case, the soft--thresholding operator was used to derive part (iii). 
		
		Note that nonzero values of the optimal control necessarily satisfy the critical point condition \eqref{eq:crit_point}, which might be satisfied by two values, see {Figure \ref{fig:1}}. Recall that a unique solution is not guaranteed for problem \eqref{e:OCP_c}. 
		If $\bar u(x)>\frac{1}{\gamma}$, we look for the value that minimizes $\frac{\alpha}{2} u^2  + \bar \phi(x) u + \beta (u + \frac{q-1}{\gamma})^q$. Furthermore, its second--order derivative at $\bar u(x)$ is given by $$ \alpha + \beta q(q-1)\Big(\bar u(x) +  \frac{q-1}{\gamma}\Big)^{q-2}, $$
		which is positive, provided that $\bar u(x)\geq s^*+\frac{1-q}{\gamma}$. By symmetry, the case $\bar u(x)< -\frac{1}{\gamma}$ is analogous.
		Therefore, we choose the solution of \eqref{eq:crit_point}, such that 
		\begin{equation}
			|\bar u(x)|> s^*+ \frac{1-q}{\gamma} \qquad \text{with } s^*= \Big(\frac{\beta}{\alpha} q(1-q)\Big)^{\frac{1}{2-q}},\label{eq:sup_bound}
		\end{equation}
		for almost all $x\in \om$.
		
		\begin{figure}[ht!]
		\includegraphics{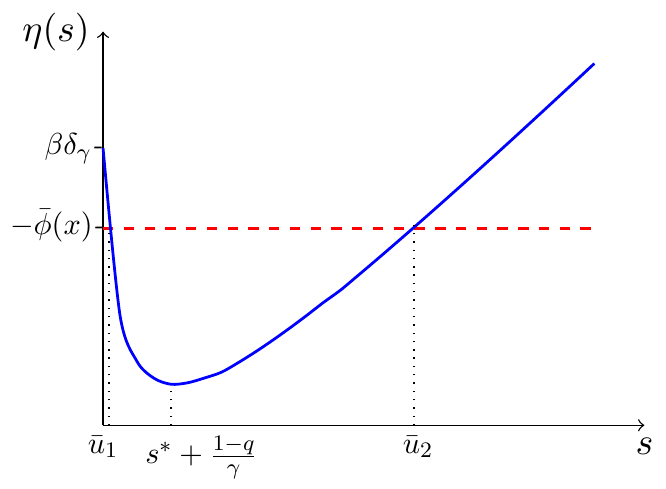}
%
			 \caption{Solutions of the critical point equation \eqref{eq:crit_point}}
			\label{fig:1}
		\end{figure}
	\end{remark}

	In view of Remark \ref{r:u_2_sol} we state the following assumption.
	\begin{assumption}\label{a:local_scc}
		Let $\bar u \in U_{ad}$ satisfying optimality system \eqref{eq:OTP_P'c}.  Then, there exists a constant $C>1$ such that:
		$$|\bar u(x)|\geq  C s^*+ \frac{1-q}{\gamma},$$
		for almost all $x\in\om$, such that $\bar u(x)\not =0$.
	\end{assumption}
	It is clear that (local) optimality for nonsmooth DC--problems can not be established using second-order derivatives due to the lack of differentiability of the objective functional. Instead, we will rely on the underlying convexity given by the DC-representation of the cost functional.  	
	\begin{theorem}{\label{t:ssoc}}
		Let $\bar u \in U_{ad}$ satisfying optimality conditions \eqref{eq:OTP_P'c}. Then, under Assumption \ref{a:local_scc}, there exist constants $\varrho>0$  and $\sigma>0$, such that the following relation holds:
		\begin{equation}
			\sigma \norm{u - \bar u}^2_{L^2(\om)}		\leq J_\gamma(y,u)-J_\gamma(\bar y,\bar u), \quad \forall u \in B_\infty(\varrho,\bar u) \cap U_{ad}, {\label{eq:ssoc}}
		\end{equation}
		for  $\gamma$ sufficiently large.
	\end{theorem}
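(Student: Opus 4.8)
The plan is to exploit the difference--of--convex splitting $J_\gamma = G - H$ of \eqref{eq:GH}: $G$ is convex with quadratic, $\alpha$--strongly convex smooth part $F$, while $H(u) = \int_\om k(u(x))\,dx$ with $k$ convex, $k' = \beta\, j$ ($j$ as in \eqref{eq:j}), and $k''(t) = \beta q(1-q)\bigl(|t| + \tfrac{q-1}{\gamma}\bigr)^{q-2}$ on $\{|t| > 1/\gamma\}$. I would first record two facts: $\beta\bar w = H'(\bar u)$, since $\bar w = j(\bar u)$; and, by \eqref{eq:gradientkkt} together with the complementarity relations \eqref{eq:compl2}, $\beta\bar w = \nabla F(\bar u) + \beta\delta_\gamma\bar\zeta + (\lambda_b - \lambda_a) \in \partial G(\bar u)$. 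Then, for $u \in U_{ad}$, I would write
\[
J_\gamma(u) - J_\gamma(\bar u) = \bigl[G(u) - G(\bar u) - \beta(\bar w, u-\bar u)_{L^2(\om)}\bigr] - \bigl[H(u) - H(\bar u) - \beta(\bar w, u-\bar u)_{L^2(\om)}\bigr]
\]
and estimate the two brackets separately.

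For the first bracket, strong convexity of $F$ (dropping $\tfrac12\norm{S(u-\bar u)}^2_{L^2(\om)}\ge 0$) together with convexity of $\beta\delta_\gamma\norm{\cdot}_{L^1(\om)}$ and of $I_{U_{ad}}$ — using \eqref{eq:compl2} to discard the normal--cone term for $u\in U_{ad}$ — yields a lower bound $\tfrac{\alpha}{2}\norm{u-\bar u}_{L^2(\om)}^2 + \beta\delta_\gamma\,\Delta(u)$, where $\Delta(u) := \int_\om\bigl(|u| - |\bar u| - \bar\zeta\,(u-\bar u)\bigr)\,dx \ge 0$ is the Bregman remainder of the $L^1$ term. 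By convexity of $k$, the second bracket equals $\int_\om r(x)\,dx$ with $0 \le r(x) = k(u(x)) - k(\bar u(x)) - \beta\, j(\bar u(x))(u(x)-\bar u(x))$. It therefore remains to prove that, for suitable $\varrho$ and large $\gamma$, $\int_\om r \le \tfrac{\alpha}{2}\norm{u-\bar u}^2_{L^2(\om)} + \beta\delta_\gamma\Delta(u) - \sigma\norm{u-\bar u}^2_{L^2(\om)}$ on $B_\infty(\varrho,\bar u)\cap U_{ad}$, which I would do by splitting $\om = \om_{\ne 0}\cup\om_0$, with $\om_{\ne 0} := \{\bar u \ne 0\}$ and $\om_0 := \{\bar u = 0\}$.

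On $\om_{\ne 0}$, Assumption \ref{a:local_scc} gives $|\bar u(x)| \ge C s^* + \tfrac{1-q}{\gamma}$ with $C>1$; choosing $C' := (1+C)/2 \in (1,C)$ and $\varrho \le \tfrac{C-1}{2}s^*$, the segment joining $u(x)$ and $\bar u(x)$ stays in $\{|t| \ge C's^* + \tfrac{1-q}{\gamma}\}\subset\{|t|>1/\gamma\}$ (for $\gamma$ large) with constant sign. There $\bar\zeta = \sign(\bar u)$, so the integrand of $\Delta$ vanishes, and using $(s^*)^{2-q} = \tfrac{\beta}{\alpha}q(1-q)$ one gets $k''(t) \le \alpha (C')^{q-2} < \alpha$, hence $r(x) \le \tfrac{\alpha}{2}(C')^{q-2}(u(x)-\bar u(x))^2$ and an $\om_{\ne 0}$--contribution $\ge \tfrac{\alpha}{2}\bigl(1-(C')^{q-2}\bigr)\int_{\om_{\ne 0}}(u-\bar u)^2$. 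On $\om_0$ one has $r(x) = k(u(x))$, which is $0$ for $|u(x)|\le 1/\gamma$ but only \emph{linearly} small, $0 \le r(x) \le \beta\delta_\gamma|u(x)|$ (by $|j| \le \delta_\gamma$), for $1/\gamma < |u(x)| \le \varrho$ — this is the main obstacle, since a crude bound would make the $\om_0$--contribution negative. I would overcome it using two observations: on $\om_0$ the KKT relations force $\bar\zeta = -\bar\phi/(\beta\delta_\gamma)$, so $|\bar\zeta(x)| \le M_\phi/(\beta\delta_\gamma)$ where $M_\phi$ is the bound for $\norm{\bar\phi}_{L^\infty(\om)}$ from \eqref{eq:phi_bound} and $\bar u\in U_{ad}$ — crucially $\gamma$--independent, while $\delta_\gamma = q^q\gamma^{1-q}\to\infty$; and the exact cancellation $\beta\delta_\gamma|u(x)| - r(x) = \beta\bigl(|u(x)| + \tfrac{q-1}{\gamma}\bigr)^q$. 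Consequently, on $\{1/\gamma < |u| \le \varrho\}$ the $\om_0$--contribution equals $\tfrac{\alpha}{2}u^2 - M_\phi|u| + \beta\bigl(|u| + \tfrac{q-1}{\gamma}\bigr)^q$, and a short one--variable computation shows $t \mapsto \beta\bigl(t + \tfrac{q-1}{\gamma}\bigr)^q - M_\phi t$ is increasing on $(1/\gamma,\varrho]$ and nonnegative at $t = 1/\gamma$ whenever $\varrho < (\beta q/M_\phi)^{1/(1-q)}$ and $\gamma$ is large; together with the trivial estimate on $\{|u|\le 1/\gamma\}$ (where $r=0$ and $|\bar\zeta|\le 1$) this gives $\om_0$--contribution $\ge \tfrac{\alpha}{2}\int_{\om_0}u^2$.

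Adding the two pieces then yields \eqref{eq:ssoc} with $\sigma = \tfrac{\alpha}{2}\bigl(1 - (C')^{q-2}\bigr) > 0$ and $\varrho$ the smallest of the thresholds above, for all sufficiently large $\gamma$. The genuinely delicate step is the $\om_0$ estimate: one must track the precise cancellation between the nonconvex correction in $H$ and the $L^1$ penalty in $G$, and the argument closes only because the adjoint state $\bar\phi$ is bounded uniformly in $\gamma$ whereas $\delta_\gamma\to\infty$, forcing the multiplier $\bar\zeta$ to be uniformly small on the inactive set; the $\om_{\ne 0}$ part is, by contrast, a routine strong--convexity argument once Assumption \ref{a:local_scc} keeps $u$ and $\bar u$ away from the region where $k''$ is not dominated by $\alpha$.
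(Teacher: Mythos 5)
Your argument is correct, and while it shares the paper's overall strategy (the DC splitting $J_\gamma=G-H$, strong convexity of $F$, and Assumption \ref{a:local_scc} via $(s^*)^{2-q}=\tfrac{\beta}{\alpha}q(1-q)$ to force $k''\le \alpha (C')^{q-2}<\alpha$ along segments in the support of $\bar u$), it departs from the paper in how the inactive set is handled, and this is the substantive difference. The paper uses the variational inequality \eqref{eq:vi} to discard the $L^1$ Bregman remainder and the normal-cone term, arriving at \eqref{eq:ssc_3}, and then tries to dominate $\beta(\bar w-w,\bar u-u)_{L^2(\om)}$ by $\alpha\norm{\bar u-u}^2_{L^2(\om)}$ globally (inequality \eqref{eq:ssc_4}); its mean-value estimate is justified only where the segment between $\bar u(x)$ and $u(x)$ stays in the region $|t|\ge Cs^*+\tfrac{1-q}{\gamma}$, and the claim that every $u\in B_\infty(\varrho,\bar u)$ satisfies this a.e.\ on $\om\setminus\om_\rho$ fails on $\{\bar u=0\}$: for $u$ supported in the inactive set with values $t\in(1/\gamma,\varrho]$ one has $\beta(\bar w-w)(\bar u-u)=\beta j(t)t$, which is of order $\beta\delta_\gamma t$ and is not bounded by $\alpha t^2$ as $\gamma\to\infty$. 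You instead keep the $\beta\delta_\gamma$-Bregman term of the $L^1$ norm (and the multiplier term, legitimately nonpositive by \eqref{eq:compl2}), split $\om$ into $\{\bar u\ne0\}$ and $\{\bar u=0\}$, and on the latter exploit the exact identity $\beta\delta_\gamma|u|-k(u)=\beta\bigl(|u|+\tfrac{q-1}{\gamma}\bigr)^q$ together with $\beta\delta_\gamma\bar\zeta=-\bar\phi$ there (valid since $\lambda_a=\lambda_b=0$ where $u_a<\bar u=0<u_b$ and $\bar w=0$), so that the $\gamma$-uniform bound on $\norm{\bar\phi}_{L^\infty(\om)}$ and $\delta_\gamma\to\infty$ make the $\{\bar u=0\}$ contribution bounded below by $\tfrac{\alpha}{2}\int_{\{\bar u=0\}}u^2$, yielding \eqref{eq:ssoc} with $\sigma=\tfrac{\alpha}{2}\bigl(1-(C')^{q-2}\bigr)$ once $\varrho\le\tfrac{C-1}{2}s^*$, $\varrho<(\beta q/M_\phi)^{1/(1-q)}$ and $\gamma^{1-q}\ge M_\phi/(\beta q^q)$. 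What your route buys is precisely a complete treatment of the region the paper's estimate \eqref{eq:ssc_4} glosses over, at the cost of a slightly longer bookkeeping of the $L^1$ and normal-cone remainders; the paper's shorter route is fine on the support of $\bar u$ but, as written, does not justify the claimed strict inequality on $\{\bar u=0,\ |u|>1/\gamma\}$. Two cosmetic points: your lower bound for the first bracket should carry the factor $\tfrac12\norm{S(u-\bar u)}^2_{L^2(\om)}\ge 0$ explicitly if you want to recover the state term appearing in the paper's intermediate estimates (not needed for \eqref{eq:ssoc}), and it is worth stating explicitly that $M_\phi$ is $\gamma$-independent because $\bar u\in U_{ad}$ is uniformly bounded, since the theorem only assumes the optimality system rather than global optimality.
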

	\begin{proof} Aiming to prove the local optimality of $\bar u$, we estimate the difference of the cost function at a point $u$ close to $\bar u$. Let $\bar y$ and $y$ be the corresponding states associated with $\bar u$ and $u$, respectively. We consider
		\begin{align}\label{eq:ssc_0}
			\mathcal{J}_\gamma(y,u) - 	\mathcal{J}_\gamma(\bar y,\bar u) ={J}_\gamma(u) - {J}_\gamma(\bar u)= G( u) -H(u)  - G(\bar u) + H(\bar u).
		\end{align}
		Recalling that $G(u)=F(u)+\beta\delta_\gamma \norm{u}_1$, with $F$ the quadratic functional given by $F(u) = \\ \frac12 \norm{Su +Sf-y_d}^2_{L^2(\om)} + \frac{\alpha}{2} \norm{u}^2_{L^2(\om)}$, we have:
		\begin{align*}
			G(u)-G(\bar u) = & (S^*(\bar y -y_d) + \alpha \bar u,u - \bar u)_{L^2(\om)} + (S(\bar u-u), S(\bar u-u) )_{L^2(\om)} \\
			&+  {\alpha} (\bar u -  u,\bar u - u)_{L^2(\om)}  + \beta\delta_\gamma \norm{u}_{L^1(\om)} - \beta\delta_\gamma \norm{\bar u}_{L^1(\om)} 
			\\
			\geq & (\bar \phi + \alpha \bar u + \beta\delta_\gamma \bar \zeta ,u - \bar u)_{L^2(\om)} +  \norm{\bar y - y}^2_{L^2(\om)} + \alpha \norm{\bar u -u}^2_{L^2(\om)},
		\end{align*}
		where the last inequality is obtained from the fact that $\bar \zeta \in \partial \norm{\cdot}_{L^1(\om)}(\bar u)$. 
		Taking into account that $\bar u$ satisfies the variational inequality \eqref{eq:vi}, we deduce
		\begin{equation}\label{eq:ssc_1}
			G(u)-G(\bar u) \geq (\beta \bar w, u-\bar u)_{L^2(\om)} +  \norm{\bar y - y}^2_{L^2(\om)} +   \alpha\norm{\bar u -u}^2_{L^2(\om)}.
		\end{equation}

		On the other hand, by using the G\^ateaux differentiability and the convexity of function $H$, see \cite{merino2019}, we get
		\begin{align}\label{eq:ssc_2}
			H(\bar u) - H(u) \geq H'_G(u, \bar u - u) = (\beta  w, \bar u -u )_{L^2(\om)}, \quad \forall u \in L^2(\om).	
		\end{align}
		Replacing \eqref{eq:ssc_1} and \eqref{eq:ssc_2} in \eqref{eq:ssc_0} we get 
		\begin{align} \label{eq:ssc_3}
			\mathcal{J}_\gamma(y,u) - 	\mathcal{J}_\gamma(\bar y,\bar u) \geq & \norm{\bar y - y}^2_{L^2(\om)} +  \alpha \norm{\bar u -u}^2_{L^2(\om)} - \beta( \bar w - w, \bar u -u)_{L^2(\om)}.  %
		\end{align}
		Notice that the last term in the above inequality is nonnegative due to the monotonicity of $j'$. In view of Assumption \ref{a:local_scc}, we are allowed to chose a $\varrho$, such that for all $u \in B_\infty(\varrho,\bar u)$ they also satisfy 
		\[ |u(x)|\geq Cs^*+\frac{1-q}{\gamma}, \quad \text{for almost all }  x \in \om\backslash\om_\rho. \] 
		Thus, we estimate 
		\begin{align}
			\int_\om &\beta  (\bar w(x)-w(x)) (\bar u(x) -u(x) ) \,dx \nonumber \\
			&\leq \int_{\om\backslash\om_\rho} \beta  |\bar w(x)-w(x)| |\bar u(x) -u(x) | \,dx +\beta \int_{\om_\rho} |\bar w(x)-w(x)||\bar u(x) -u(x)| \,dx \nonumber \\
			& \leq  \beta q (1-q) \int_{\om\backslash\om_\rho}  |\big(\tilde u(x) + \frac{q-1}{\gamma}\big)^{q-2}(\bar u(x)-u(x))||\bar u(x) -u(x)| \,dx +  \beta  \int_{\om_\rho} |w(x)||u(x)| \,dx , \nonumber 
		\end{align}
		where $\tilde u (x)$ lies between $\bar u(x)$ and $u(x)$. Observing that $|\tilde u(x)| \geq Cs^* + \frac{1-q}{\gamma}$ and, using Theorem \ref{p:u_softt}--\emph{(iv)}, we have that $\om_\rho$ has zero measure for $\gamma$ sufficiently large. Therefore, it follows that
		
		\begin{align}\label{eq:ssc_4}
			\int_\om &\beta  (\bar w(x)-w(x)) (\bar u(x) -u(x) ) \,dx <\alpha \norm{\bar u - u}^2_{L^2(\om)}.
		\end{align}
		Replacing \eqref{eq:ssc_4} in \eqref{eq:ssc_3} and taking into account that the bilinear form $(S\cdot,S\cdot)_{L^2(\om)}$ is coercive, there exists $\sigma>0$ satisfying \eqref{eq:ssoc}.
	\end{proof}

	\section{Finite element approximation}
	On $\bar{\om}$, we consider a family of meshes $(\mathcal{T}_h)_{h>0}$ which consist  of triangles $T\in\mathcal{T}_h$ ,  such that the following conditions are satisfied:
	
	\begin{assumption}
		\begin{enumerate}[(i)]
			\item $\displaystyle \bigcup_{T\in\mathcal{T}_h} T=\bar{\om}$ and
			\item  For two triangular elements $T_i$ and $T_j$, $i\not=j$ they share a vertex, a side or are disjoints.
		\end{enumerate}
		For each triangle $T\in\mathcal{T}_h$, we denote  $\rho(T)$ the diameter of  $T$, and $\sigma(T)$ the diameter of the largest ball contained in $T$. The mesh size $h$ associated to the mesh is defined by
		\[
		\displaystyle h=\max_{T\in\mathcal{T}_h}\rho(T).
		\]
	\end{assumption}
	Throughout this paper, we impose the following regularity assumption on the grid: 
	\begin{assumption}\label{h:reggrid}
		There exist two positive constants $\rho$ and $\sigma$ such that 
		\[\frac{\rho(T)}{\sigma(T)}\le \sigma\quad\text{ and }\quad\frac{h}{\rho(T)}\le \rho,\qquad\forall T\in\mathcal{T}_h,\]
		for all $h>0$. 
	\end{assumption}
	Associated with the triangulation $T_h$, we define the following approximation spaces:
	\begin{align}
		Y_h&=\{y_h\in C(\bar{\om}):y_h|_{T}\in P_1(T),\ \forall T\in\mathcal{T}_h,\ y_h=0\ \text{on }\ \Gamma\}, \\
		U_h&=\{u_h\in L^2(\om): u_h|_T=P_0(T), \forall T\in \mathcal{T}_h\},
	\end{align}

	where $P_0(T)$ and $P_1(T)$ denote the set of  real valued constant functions and  linear--affine  continuous real-valued functions defined on $T$, respectively. 
	
	\subsection{Discretization  of the state equation}	 
	The discrete state equation is defined as the following variational problem formulated in $Y_h \subset H_0^1(\om)$: for every $w\in L^2(\om)$, we seek a function $y_h \in Y_h$ satisfying  the equation
	\begin{equation}\label{eq:state_h}
		a(y_h,v_h)=(w,v_h),\quad\forall v_h\in Y_h.
	\end{equation}
	This problem has a unique solution $y_h \in Y_h$ which depends continuously on the data $w$, see \cite{tro2010}. Analogously to the continuous counterpart, we introduce the \emph{discrete} solution operator $S_h: U_h \arrow Y_h$ which assigns to each $w$, the corresponding solution $y_h=y_h(w)$ satisfying \eqref{eq:state_h}. Moreover, the following estimate holds: there exists a positive constant $c$ (independent of $h$ and $w$) such that 
	\begin{equation*}
		\norm{y_h}_{H_0^1(\om)} \leq c \norm{w}_{L^2(\om)}.
	\end{equation*}
	Similarly, we have that the state $y_h$ associated to a control $u\in U_h$ can be written as $y_h = S_h u +y_{h,f}$, with $y_{h,f}=S_hf$. However, for simplicity and without loss of generality, we will assume that $y_{h,f}$ is computed exactly, i.e. $y_{h,f}=y_f$.  
	
	The approximation for linear-elliptic problems of the form \eqref{eq:state} is well known, {see \cite[Section 17]{ciarlet1990}} for the proof of the following rate of approximation.
	\begin{proposition}\label{p:state_estimate}
		Let $y$ and $y_h$ be the solutions, associated to a control $u \in L^2(\om)$, of equations \eqref{eq:state1} and \eqref{eq:state_h}, respectively. There exists a constant $c_A>0$, independent of $h$, such that 
		\begin{equation*}
			\label{esti_l2} ||y-y_h||_{L^2(\om)} + h||y-y_h||_{H^1(\om)}\le c_A h^2. 
		\end{equation*}
		
	\end{proposition}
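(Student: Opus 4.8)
The plan is to follow the classical path for conforming $P_1$ finite element approximations: an energy-norm estimate via Galerkin orthogonality and C\'ea's lemma, the standard interpolation error bound on the shape-regular family $(\mathcal{T}_h)_h$, and an Aubin--Nitsche duality argument for the $L^2$-norm. The regularity input needed is that, by elliptic regularity for $A$ on $\om$ (the same regularity that underlies the cited estimate), the solution $y$ of \eqref{eq:state} with right-hand side $w=u$ belongs to $H^2(\om)\cap H_0^1(\om)$ and satisfies $\norm{y}_{H^2(\om)}\le c\,\norm{u}_{L^2(\om)}$.

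\textbf{Energy norm.} Subtracting \eqref{eq:state_h} from \eqref{eq:state} tested against $v_h\in Y_h\subset H_0^1(\om)$ gives the Galerkin orthogonality $a(y-y_h,v_h)=0$ for all $v_h\in Y_h$. Using the coercivity of $a$ (inherited from the uniform ellipticity of $A$ and $c_0\ge 0$, with some constant $c_e>0$) and its boundedness, for any $v_h\in Y_h$,
$$c_e\,\norm{y-y_h}_{H_0^1(\om)}^2\le a(y-y_h,y-y_h)=a(y-y_h,y-v_h)\le C\,\norm{y-y_h}_{H_0^1(\om)}\,\norm{y-v_h}_{H_0^1(\om)},$$
so $\norm{y-y_h}_{H_0^1(\om)}\le (C/c_e)\inf_{v_h\in Y_h}\norm{y-v_h}_{H_0^1(\om)}$. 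Choosing $v_h=I_h y$, the Lagrange interpolant of $y$ (well defined since $y\in H^2(\om)\hookrightarrow C(\bar\om)$ in dimension two), and invoking the interpolation error estimate on the shape-regular mesh of Assumption \ref{h:reggrid}, namely $\norm{y-I_h y}_{H^1(\om)}\le c\,h\,|y|_{H^2(\om)}$, we obtain $\norm{y-y_h}_{H^1(\om)}\le c_1\,h\,\norm{u}_{L^2(\om)}$; multiplied by $h$ this is the $O(h^2)$ bound for the second term on the left-hand side.

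\textbf{The $L^2$ norm estimate.} Let $z\in H_0^1(\om)$ solve the adjoint problem $a(v,z)=(y-y_h,v)_{L^2(\om)}$ for all $v\in H_0^1(\om)$; by the same regularity, $z\in H^2(\om)$ with $|z|_{H^2(\om)}\le c\,\norm{y-y_h}_{L^2(\om)}$. Testing with $v=y-y_h$ and using Galerkin orthogonality once more to subtract $I_h z\in Y_h$,
$$\norm{y-y_h}_{L^2(\om)}^2=a(y-y_h,z)=a(y-y_h,z-I_h z)\le C\,\norm{y-y_h}_{H^1(\om)}\,\norm{z-I_h z}_{H^1(\om)},$$
and the interpolation estimate applied to $z$ gives $\norm{z-I_h z}_{H^1(\om)}\le c\,h\,|z|_{H^2(\om)}\le c\,h\,\norm{y-y_h}_{L^2(\om)}$. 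Dividing by $\norm{y-y_h}_{L^2(\om)}$ and inserting the energy estimate yields $\norm{y-y_h}_{L^2(\om)}\le C\,h\,\norm{y-y_h}_{H^1(\om)}\le c_2\,h^2\,\norm{u}_{L^2(\om)}$. Adding the two contributions gives the claimed inequality, with $c_A$ depending only on $\om$, the ellipticity data, the shape-regularity constants, and (through the right-hand side) on $\norm{u}_{L^2(\om)}$, which is uniformly bounded because $u\in U_{ad}\subset L^\infty(\om)$.

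\textbf{Main obstacle.} The only genuinely delicate ingredient is the $H^2$-regularity of $y$ (and of the dual solution $z$): with merely continuous coefficients $a_{ij}$ and a domain that is only assumed star-shaped, $H^2$-regularity is not automatic, so one must either strengthen the standing hypotheses (e.g. $\om$ polygonal convex, $a_{ij}$ Lipschitz) or appeal directly to the precise regularity statement behind the cited reference. Everything else --- C\'ea's lemma, the $P_1$ interpolation error on a shape-regular triangulation, and the Aubin--Nitsche duality trick --- is routine.
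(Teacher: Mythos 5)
Your argument is the classical C\'ea--lemma plus Aubin--Nitsche duality proof, which is essentially the paper's approach: the paper gives no proof of its own and simply cites \cite[Section 17]{ciarlet1990}, where exactly this argument is carried out, so your reconstruction is correct and matches it. Your closing caveat about $H^2$-regularity (merely continuous coefficients, star-shaped domain) is legitimate but applies equally to the paper's tacit appeal to the classical estimate, as does the observation that $c_A$ must absorb $\norm{u}_{L^2(\om)}$.
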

	
	\subsection{Numerical approximation of the optimal control problem} 
	The approximation of the control functions by piecewise constant functions space $U_h$ is motivated by the discontinuous nature of the solution, as discussed in Theorem \ref{p:u_softt}. Therefore, a discrete control $u \in U_h$ can be written as
	\begin{equation*}
		u=\sum_{T\in\mathcal{T}_h} u_T\chi_T,
	\end{equation*}
	where $\chi_T$ is the characteristic function of $T$ and $u_T \in \reals$, for all $T \in \mathcal{T}_h$. As a result of this type of approximation, the  set of discrete admissible controls is given by:
	\begin{equation}\label{eq:Uadh}
		U^h_{ad} = U_{ad} \cap U_h.
	\end{equation}
	In view of \eqref{eq:state_h} and \eqref{eq:Uadh} we formulate the associated discrete optimal control problem: 
	\begin{equation}
		\tag{$P^h_\gamma$} \label{e:OCP_h}
		\begin{cases}
			\displaystyle\min_{u}~ J_\gamma^h (u):=\frac{1}{2}\| S_h u+S_{h}f-y_d \|^2_{L^2(\om)}+\frac{\alpha}{2}\| u\|^2_{L^2(\om)}+\beta \fsparse_{q,\gamma}(u)\\
			\hbox{ subject to: }\\
			u \in U^h_{ad}.
		\end{cases}
	\end{equation}
	It is clear that for $\alpha >0$ this problem has at least one solution  $\bar u_h$ in $U_h$.
	Similarly to the continuous problem \eqref{e:OCP_c}, we use the indicator function of the discrete admissible set to formulate the discrete reduced problem, as in \eqref{eq:OPT_P'}, which also admits a DC--formulation of the form $J_\gamma^h (u) = G^h(u) - H(u)$. Again, the DC-splitting is given by:
	\begin{align*}
		\begin{array}{ll}
			&\begin{array}{lrlll}
				G^h: & U^h & \arrow &\reals \\
				& u		& \mapsto &  G^h(u) &: = F^h(u) + \beta \delta_\gamma \norm{u}_{L^1(\om)}+{I}_{U^h_{ad}}, \quad\text{and}
			\end{array} \\
			&\begin{array}{lrll}
				H: & U^h & \arrow &\reals \\
				& u		& \mapsto &  H(u) : =  \beta \l( \delta_\gamma \norm{u}_{L^1(\om)} - \fsparse_{q,\gamma}(u) \r),
			\end{array}
		\end{array}
	\end{align*}
	where $F^h(u):= \frac12 \norm{S_h u +S_hf-y_d}^2_{L^2(\om)} + \frac{\alpha}{2} \norm{u}_{L^2(\om)}$. 
	
	Optimality conditions for the discrete problem \ref{e:OCP_h} can also be obtained following the DC--programming; therefore, we omit the proof.
	\begin{proposition}\label{t:fonc_h} Let ${\bar u}_h \in U_h$ be a local solution of \eqref{e:OCP_h}, then there exist:  ${\bar y}_h = S_h \bar u_h + y_{h,f}$ in $Y_h$ (with $y_{h,f}=S_hf$), 
		an adjoint state $\phi_h \in Y_h$, a multiplier $\zeta_h$ and $\bar w_h$ {in $L^\infty(\om)$ } such that the following optimality system is satisfied: 
		\begin{subequations}
			\label{eq:OPT_P'h}
			\begin{align}
				& \begin{array}{rll}
					a(\bar y_h,v) &= (\bar u_h +f,v),  & \forall v\in Y_h,
				\end{array}\label{eq:state1h}\\
				& \begin{array}{rll}
					a^*( \bar \phi_h,v) &= (\bar y_h - y_d,v),  &  \forall v\in Y_h,
				\end{array} \label{eq:adj1h}\\
				%
				& (\bar \phi_h + \beta \, (\delta_\gamma \, \zeta_h -\bar w_h)  +\alpha\,\bar u_h,  u -\bar u_h)_{L^2(\om)} \geq 0, \, \forall u \in U^h_{ad},	\label{eq:vih}\\
				& \begin{array}{lll}
					\zeta_h(x) &=1, & \text{ if } \bar u_h (x) >0, \\
					\zeta_h(x) &=-1, & \text{ if } \bar u_h (x) <0, \\
					|\zeta_h(x)| &\leq 1,  & \text{ if } \bar u_h (x) =0, \\ 
				\end{array} \label{eq:zetah} 
				\, \text{and}\\ 
				&  \bar w_h (x): =
				\left\{ \begin{array}{ll}
					\ds \l[ \delta_\gamma - q \l( | \bar u_h(x)| + \frac{q-1}{\gamma} \r)^{q-1} \r]\sign ( \bar u_h (x)),& \hbox{if } |\bar u_h(x)| > \frac{1}{\gamma},\vspace{2mm}\\
					0,&   \text{ otherwise, } 
				\end{array} \right. \label{eq:wh}\\
				&\text{ for almost all } x \in \om. \nonumber
			\end{align}
		\end{subequations}
		Moreover, there exist ${\lambda_a}_h$ and ${\lambda_b}_h$ in $U_h$ such that the last optimality system can be written as a KKT system by  replacing \eqref{eq:vih} with:
		\begin{subequations}
			\label{eq:OTP_P'c_h}
			\begin{align}
				& \bar \phi_h +\alpha \bar u_h + \beta \, (\delta_\gamma \, \zeta_h -\bar w_h ) + {\lambda_b}_h - {\lambda_a}_h =0	\label{eq:gradientkkt_h}\\
				& \begin{array}{ll}
					{\lambda_a}_h \geq 0, & {\lambda_b}_h \geq 0,  \\
					{\lambda_a}_h(\bar u_h -u_a) =0, & {\lambda_b}_h (u_b-\bar u_h) =0, \label{eq:compl2_h}
				\end{array}\\
				& \begin{array}{lll}
					\zeta_h(x) &=1 & \text{ if } \bar u_h (x) >0, \\
					\zeta_h(x) &=-1 & \text{ if } \bar u_h (x) <0, \\
					|\zeta_h(x)| &\leq 1  & \text{ if } \bar u_h (x) =0,
				\end{array}
			\end{align}
		\end{subequations}
		with $\bar w_h$ given by \eqref{eq:wh}.	
	\end{proposition}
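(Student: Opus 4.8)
The plan is to repeat, in the finite--dimensional space $U_h$, the DC--programming argument that produced Proposition \ref{t:fonc}. Existence of a global minimizer has already been noted for $\alpha>0$; let $\bar u_h$ be any local minimizer of \eqref{e:OCP_h}. I would work with the reduced discrete cost $J_\gamma^h(u)=F^h(u)+\beta\fsparse_{q,\gamma}(u)+I_{U^h_{ad}}(u)$ and its DC--splitting $J_\gamma^h=G^h-H$ displayed above, both summands convex and finite on $U_h$. By DC--programming theory (cf. \cite{hiriart1989}) a local minimizer of $G^h-H$ is DC--critical, i.e. $\partial H(\bar u_h)\subset\partial G^h(\bar u_h)$, the subdifferentials being the usual convex ones on $U_h$. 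The state equation \eqref{eq:state1h} is nothing but the defining identity $\bar y_h=S_h\bar u_h+y_{h,f}$, and $\bar\phi_h\in Y_h$ is by definition the solution of the discrete adjoint equation \eqref{eq:adj1h}.

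The next step is to compute $\partial H(\bar u_h)$. A short computation with the definition of $h_{q,\gamma}$ shows that the integrand $t\mapsto\beta(\delta_\gamma|t|-h_{q,\gamma}(t)^q)$ of $H$ vanishes identically on $[-\tfrac1\gamma,\tfrac1\gamma]$ and is $C^1$ elsewhere, with matching one--sided derivatives at $\pm\tfrac1\gamma$ (both zero, since $q(q/\gamma)^{q-1}=\delta_\gamma$). Hence $H$ is G\^ateaux differentiable on $L^2(\om)$ with $H'(u)$ the superposition operator $x\mapsto\beta\,j(u(x))$ (its derivative even Lipschitz by Lemma \ref{l:w_prop}), so that $\partial H(\bar u_h)=\{\beta\bar w_h\}$ with $\bar w_h$ given by \eqref{eq:wh}, and DC--criticality reduces to the single inclusion $\beta\bar w_h\in\partial G^h(\bar u_h)$.

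It then remains to unfold $\partial G^h(\bar u_h)$. The quadratic $F^h$ is convex and Fr\'echet differentiable with $\nabla F^h(\bar u_h)=S_h^{*}(S_h\bar u_h+S_h f-y_d)+\alpha\bar u_h$, which by \eqref{eq:state1h}--\eqref{eq:adj1h} equals $\bar\phi_h+\alpha\bar u_h$ up to the $L^2(\om)$--projection onto $U_h$. By the Moreau--Rockafellar sum rule — applicable because $F^h$ and $\beta\delta_\gamma\norm{\cdot}_{L^1(\om)}$ are finite and continuous everywhere and, since $u_a<0<u_b$, the origin lies in the interior of $U^h_{ad}$ relative to $U_h$ — one gets
\[
\partial G^h(\bar u_h)=\bar\phi_h+\alpha\bar u_h+\beta\delta_\gamma\,\partial\norm{\cdot}_{L^1(\om)}(\bar u_h)+N_{U^h_{ad}}(\bar u_h),
\]
where $\partial\norm{\cdot}_{L^1(\om)}(\bar u_h)$ is the set of piecewise constant $\zeta_h$ obeying the pointwise sign relations \eqref{eq:zetah} and $N_{U^h_{ad}}(\bar u_h)=\{\nu:(\nu,u-\bar u_h)_{L^2(\om)}\le0\ \forall u\in U^h_{ad}\}$. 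Substituting these descriptions into $\beta\bar w_h\in\partial G^h(\bar u_h)$ and rearranging yields exactly \eqref{eq:vih} together with \eqref{eq:zetah} and \eqref{eq:wh}; the bounds $\bar w_h,\zeta_h\in L^\infty(\om)$ follow from Lemma \ref{l:w_prop}(a) and from $\bar u_h$ (hence $\bar w_h=j(\bar u_h)$) being piecewise constant.

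Finally, for the KKT reformulation I would use that $U^h_{ad}$ is a box: cellwise, the normal--cone element splits as $\nu=\lambda_{b,h}-\lambda_{a,h}$ with $\lambda_{a,h},\lambda_{b,h}\in U_h$, $\lambda_{a,h},\lambda_{b,h}\ge0$ and complementarity \eqref{eq:compl2_h}, which turns \eqref{eq:vih} into the equality \eqref{eq:gradientkkt_h}. I do not expect a genuine obstacle: the argument is a line--by--line finite--dimensional rerun of the proof of Proposition \ref{t:fonc}. The two points worth a word of care are the qualification condition for the sum rule (trivial, as noted) and the fact that the adjoint $\bar\phi_h\in Y_h$ is only piecewise linear while the multipliers are piecewise constant — resolved by reading \eqref{eq:gradientkkt_h} in the $L^2(\om)$--projected sense onto $U_h$, which is consistent because \eqref{eq:vih} probes only directions in $U_h$.
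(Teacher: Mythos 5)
Your proposal is correct and follows exactly the route the paper intends: the paper omits this proof, remarking only that the discrete optimality system is obtained by the same DC--programming argument as Proposition \ref{t:fonc}, which is precisely the finite--dimensional rerun you carry out (DC--criticality $\partial H(\bar u_h)\subset\partial G^h(\bar u_h)$, G\^ateaux differentiability of $H$ giving $\beta\bar w_h$, the sum rule for $\partial G^h$, and the box structure yielding the multipliers). Your remark that \eqref{eq:gradientkkt_h} should be read through the $L^2(\om)$--projection onto $U_h$ — since $\bar\phi_h$ is only piecewise linear while the other terms are piecewise constant, and \eqref{eq:vih} only probes directions in $U_h$ — is a legitimate and in fact clarifying reading of the statement, not a gap.
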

	
	Proceeding as in the continuous case, we have a result on the structure of the discrete optimal control.
	\begin{proposition} \label{p:rho_h}
		There exist $\gamma_0>0$ and $\rho_h>0$ such that for all $\gamma>\gamma_0$ the set
		\begin{equation}\label{eq:Omega_rhohp}
			\Omega_{\rho_h} := \{x\in\om : 0<|\bar u_h(x)| \leq \rho_h +\frac{1}{\gamma}\} 	,
		\end{equation}
		has zero Lebesgue measure. In addition, in view of the maximum principle, the support's bound \eqref{eq:sup_bound} also holds for $\bar u_h$.  
	\end{proposition}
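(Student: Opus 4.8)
The plan is to reproduce, at the discrete level, the argument behind Theorem~\ref{p:u_softt}, replacing the pointwise maximum‑principle characterization of $\bar u$ by the elementwise one encoded in the discrete variational inequality \eqref{eq:vih}. Two uniform‑in‑$h$ facts are needed. First, $\norm{\bar\phi_h}_{L^\infty(\om)}\le C$ with $C$ independent of $h$ and $\gamma$: since $\bar u_h\in U_{ad}$ one has $\norm{\bar u_h}_{L^\infty(\om)}\le\max\{-u_a,u_b\}$, hence $\norm{\bar y_h-y_d}_{L^p(\om)}$ is uniformly bounded, and the discrete $L^\infty$‑stability of $S_h$ on the regular meshes of Assumption~\ref{h:reggrid} (equivalently, uniform discrete $W^{1,p}$‑estimates combined with the Sobolev embedding used in Lemma~\ref{l:phi_W1p}) transfers this bound to $\bar\phi_h$. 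Second, there is $c_S>0$, independent of $h$ and of $T\in\mathcal T_h$, with $\norm{S_h\chi_T}_{L^2(\om)}^2\le c_S^2\,|T|$, which follows from the $H_0^1$‑stability of $S_h$ and Poincaré's inequality.

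The first part of the statement — that $\{x\in\om:0<|\bar u_h(x)|\le\tfrac1\gamma\}$ is null for $\gamma$ large — would be obtained exactly as in the corresponding step of Theorem~\ref{p:u_softt}. On an element $T$ with $\bar u_h\equiv u_T$, $0<|u_T|\le\tfrac1\gamma$ and $u_a<u_T<u_b$, testing \eqref{eq:vih} with $\bar u_h\pm t\chi_T$ (feasible for small $|t|$) and noting that there $\zeta_h=\sign(u_T)$ while $\bar w_h\equiv0$ by \eqref{eq:wh}, one gets $\bar\phi_{h,T}+\alpha u_T+\beta\delta_\gamma\sign(u_T)=0$, where $\bar\phi_{h,T}:=\tfrac1{|T|}\int_T\bar\phi_h\,dx$. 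For $\gamma$ large enough that $\beta\delta_\gamma>\alpha/\gamma$, the left‑hand side has modulus at least $\beta\delta_\gamma$, so $\beta\delta_\gamma\le|\bar\phi_{h,T}|\le\norm{\bar\phi_h}_{L^\infty(\om)}\le C$, which is impossible once $\beta\delta_\gamma=\beta q^q\gamma^{1-q}>C$. Thus no such $T$ exists; and elements on which $u_T\in\{u_a,u_b\}$ do not meet the set once $\tfrac1\gamma<\min\{-u_a,u_b\}$.

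For the support bound, on an element with $|u_T|>\tfrac1\gamma$ and $u_a<u_T<u_b$ the same localization of \eqref{eq:vih}, now with $\bar w_h$ given by the nonzero branch of \eqref{eq:wh}, gives the elementwise critical‑point identity $\alpha u_T+\beta q(|u_T|+\tfrac{q-1}\gamma)^{q-1}\sign(u_T)=-\bar\phi_{h,T}$, the discrete analogue of \eqref{eq:crit_point}. To select the correct root I would argue as in Remark~\ref{r:u_2_sol}: since $\bar u_h$ minimizes $J_\gamma^h$ locally, the scalar map $v\mapsto J_\gamma^h(\bar u_h+(v-u_T)\chi_T)$ has a local minimum at $v=u_T$; a short computation (using that, by the discrete adjoint equation, $(\bar y_h-y_d,S_h\chi_T)=|T|\bar\phi_{h,T}$) shows it equals, up to an additive constant, $\tfrac12\norm{S_h\chi_T}^2_{L^2(\om)}(v-u_T)^2+|T|\bar\phi_{h,T}(v-u_T)+\tfrac{\alpha|T|}2v^2+\beta|T|h_{q,\gamma}^q(v)$, so its second derivative at $u_T$ is nonnegative, i.e.
\[
\norm{S_h\chi_T}^2_{L^2(\om)}+\alpha|T|+\beta|T|\,q(q-1)\Big(|u_T|+\tfrac{q-1}\gamma\Big)^{q-2}\ \ge\ 0 .
\]
Dividing by $|T|$, using $\norm{S_h\chi_T}^2_{L^2(\om)}\le c_S^2|T|$ and solving for $|u_T|$ yields $|u_T|\ge\tilde s+\tfrac{1-q}\gamma$ with $\tilde s:=\big(\beta q(1-q)/(c_S^2+\alpha)\big)^{1/(2-q)}>0$ independent of $h$ and $\gamma$; this is \eqref{eq:sup_bound} with $\tilde s\le s^*$ in place of $s^*$. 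Combining the two parts, one takes any $\rho_h$ with $0<\rho_h<\min\{\tilde s,-u_a,u_b\}$ and $\gamma_0$ large enough that all thresholds above hold and $\rho_h+\tfrac1\gamma<\tilde s+\tfrac{1-q}\gamma$; then on every $T$ one has $\bar u_h\equiv0$, $|\bar u_h|\in\{-u_a,u_b\}$, or $|\bar u_h|>\rho_h+\tfrac1\gamma$, so $\Omega_{\rho_h}$ is null.

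The main obstacle I anticipate is the first uniform fact, the $L^\infty$‑bound on $\bar\phi_h$ that is independent of both $h$ and $\gamma$: it is the discrete replacement for the elliptic estimate \eqref{eq:phi_bound} and is precisely what makes the blow‑up $\beta\delta_\gamma\to\infty$ contradict boundedness in the first part; rigorously it requires either a discrete maximum principle valid for the mesh family of Assumption~\ref{h:reggrid} or uniform discrete $L^\infty$/$W^{1,p}$‑regularity for $S_h$. A secondary, genuine point is the extra term $\norm{S_h\chi_T}^2_{L^2(\om)}$ in the discrete second‑order inequality — absent from the continuous pointwise optimality — which only replaces the constant $s^*$ by the smaller $\tilde s$ and is harmless for the statement.
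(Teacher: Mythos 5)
Your argument is correct in substance and follows the paper's overall strategy --- an $h$-uniform $L^\infty$ bound for $\bar\phi_h$, first-order information on elements carrying small nonzero values forcing $|\bar\phi_h|\ge\beta\delta_\gamma-\alpha/\gamma$ and hence a contradiction for large $\gamma$, then the critical-point identity together with a second-order argument for the jump bound --- but it differs in how the discrete optimality information is extracted, and the difference is instructive. The paper works with the pointwise identity \eqref{eq:gradientkkt_h} and, for the support bound, invokes the pointwise maximum principle for the discrete problem; taken literally, \eqref{eq:gradientkkt_h} with piecewise linear $\bar\phi_h$ and piecewise constant $\bar u_h,\zeta_h,\bar w_h,{\lambda_a}_h,{\lambda_b}_h$ can only hold with $\bar\phi_h$ replaced by its element averages, which is exactly what your localization of \eqref{eq:vih} by the tests $\bar u_h\pm t\chi_T$ produces, so your version is the more careful one and yields the same contradiction. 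Your uniform bound on $\norm{\bar\phi_h}_{L^\infty(\om)}$ (via the box constraints) is equivalent to the paper's (via $J_\gamma^h(\bar u_h)\le J_\gamma^h(0)$); both rest on the asserted $h$-uniform discrete sup-norm stability that you rightly flag, and since the paper does not prove it either, there is no gap relative to the paper on this point. The one substantive divergence is the second claim: because needle variations are not admissible in $U_h$, your elementwise second-order condition carries the extra term $\norm{S_h\chi_T}^2_{L^2(\om)}/|T|$ and yields the lower bound with $\tilde s\le s^*$ rather than with $s^*$ as in \eqref{eq:sup_bound}; this suffices for the zero-measure assertion (any $\rho_h<\tilde s$ works), but it is literally weaker than what the Proposition, following the paper's tacit pointwise maximum principle, states. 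You can narrow the discrepancy via the duality bound $\norm{S_h\chi_T}_{L^2(\om)}\le\norm{\chi_T}_{L^1(\om)}\sup_{\norm{g}_{L^2(\om)}\le1}\norm{S_h^*g}_{L^\infty(\om)}\le c\,|T|$, so that $\norm{S_h\chi_T}^2_{L^2(\om)}/|T|\le c^2|T|\to0$ and $\tilde s\to s^*$ as $h\to0$, but for fixed $h$ the exact $s^*$ bound is only available through the paper's pointwise argument, which is not rigorously justified at the discrete level.
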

	\begin{proof}
		We follow the arguments from \cite[Remark 3, (ii)]{merino2019} with slight modifications. First observe that $\norm{\bar\phi_h}_{L^\infty(\om)}$ is bounded. Indeed, since the discrete adjoint state satisfies equation \eqref{eq:adj1h}, there exists a constant $c>0$ such that $c\norm{\bar\phi_h}_{L^\infty(\om)} \leq \norm{\bar y_h-y_d}_{L^2(\om)} \leq (2 J_\gamma^h(0))^{\frac12}= \norm{y_{h,f} - y_d}_{L^2(\om)}$.
		
		Now, we claim that the measure of the set $\{x\in\om: 0<|\bar u_h(x)| \leq 1/\gamma \}$ vanishes for sufficiently large $\gamma$. We prove this statement by assuming that the corresponding measure is positive. Without loss of generality, let us also assume that $u_a < \bar u_h (x) < u_b$ for almost all $x$ in $\om_{\rho_h}$. From equation \eqref{eq:gradientkkt_h}, we  infer that for almost all $x$ in  $\om_{\rho_h}$ it holds:
		\begin{equation*}
			|\bar \phi_h(x)| \geq 	\beta \delta_\gamma - \frac{\alpha}{\gamma}.
		\end{equation*}
		This relation contradicts the fact that $|\bar \phi_h(x)|$ essentially bounded whenever $\gamma$ is taken sufficiently large. Thus, our claim is true. 
		
		In the same fashion, we assume that  for all $\rho_h>0$ the set $\{x\in\om: 1/\gamma <|\bar u_h(x)| \leq \rho_h +1/\gamma\}$ has zero Lebesgue measure. Further, we assume that $u_a < \bar u_h (x) < u_b$ and for every $\rho_h>0$ we take $\gamma$ such that $\frac{1}{\gamma} <\rho_h $. Therefore, \eqref{eq:wh} and \eqref{eq:gradientkkt_h} imply
		\begin{align*}
			|\bar \phi_h(x)| = \l| \alpha \bar u_h(x) +\beta q\l( |\bar u_h(x)| + \frac{q-1}{\gamma}\r)^{q-1}\sign(\bar u_h(x))  \r|	
			\geq  \frac{\alpha}{\gamma} + \beta q \l(\rho_h + \frac{q}{\gamma} \r)^{q-1},
		\end{align*}
		which leads us to a contradiction by taking $\rho_h$ sufficiently small. Therefore, $\rho_h$ satisfying property \eqref{eq:Omega_rhohp}  exists.
		
		For the second part of the proposition, we apply the maximum principle, tacitly assuming that the integrals of the cost function are computed exactly. 
		
		By following the lines of the Remark \ref{r:u_2_sol}, we have that for almost all $x\in\om$, where $\bar u_h(x)>0$, the discrete optimal control $\bar u_h(x)$ also satisfies:
		\begin{equation}\label{eq:crit_point2}
			{\alpha} \bar u_h(x)   + \beta q\Big(\bar u_h(x)  + \frac{q-1}{\gamma}\Big)^{q-1} =-\bar \phi_h(x).
		\end{equation}
		Using the same arguments as in the continuous case, we get that $|\bar u_h(x)|\geq s^* +\frac{1-q}{\gamma}$ for almost all $x$ where $|\bar u_h(x)|\not = 0$.
	\end{proof}

	Moreover, we may interpret the solution $\bar u_h$ of \eqref{e:OCP_h} as the solution of the following auxiliar discrete $L^1$--sparse optimal control problem by choosing $\hat g=\bar w_h$. The auxiliar discrete problem reads:
	\begin{equation}
		\label{e:OCPL1_h}
		\begin{cases}
			\displaystyle \min_{u \in U_h}\frac{1}{2}\| S_h u+y_f-y_d \|^2_{L^2(\om)}+\frac{\alpha}{2}\| u\|^2_{L^2(\om)}-\beta (\hat g,u)_{L^2(\om)} + \beta \delta_\gamma \| u \|_{L^1(\om)}\\
			\hbox{ subject to: }\\
			u \in U^h_{ad}.
		\end{cases}
	\end{equation}
	
	Analogously, the former discrete $L^1$--sparse optimal control problem has a unique solution in $U_h$, namely $u_h$ depending of the function $\hat g \in L^2(\om)$.

	Now, we introduce the following interpolation operator to represent functions in $U_h$. 	
	\begin{definition}\label{d:smooth_BV}
		The quasi--interpolant operator $\Pi_h:L^1(\om)\to U_h$ is defined as follows (see \cite{DLReMeVe}):
		\[
		\Pi_h u:=\sum_{T\in\mathcal{T}_h}u_T\chi_T,\quad\text{with}\quad u_T=\frac{1}{|T|}\int_{T} u.
		\]
		The next orthogonality result is also known \cite{wachsmuth}. For $u\in L^2(\om)$, it holds that
		\begin{equation}\nonumber 
			\int_{\om} (u-u_T) \chi_T=0, \quad \forall\, T \in \mathcal{T}_h .
		\end{equation}
		%
		%
		In case of functions belonging to $BV(\om)$, following \cite{bartels15}, we consider the interpolation operator defined on the smooth  approximation $u_\varepsilon$ of $BV$ functions (see \cite[Proposition 2.1]{bartels15}), as follows
		$$ \mathcal{C}_h u : = \Pi_h u_\varepsilon .$$
	\end{definition}
	For convenience in the forthcoming analysis, we fix $\varepsilon =  h$.	
	The interpolation error of $\Pi_h$ is known for $L^2$ and $H^{-1}$ norms  in \cite{DLReMeVe}. Updating the arguments also $L^1$-norm interpolation errors are obtained. See Lemma \ref{l:L1-quasiint} in the Appendix.
	\begin{lemma}\label{est_ele_fin2}
		There exists a positive constant $c$ such that 
		\[
		\|u-\mathcal{C}_h u\|_{L^2(\om)}\leq  c 	h^\frac{1}{2} |Du|(\om)^\frac{1}{2},
		\]
		holds for all $u\in BV(\om)\cap L^2(\om)$.
	\end{lemma}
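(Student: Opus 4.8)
The plan is to combine two classical ingredients: (i) the $BV$-smoothing estimate of \cite[Proposition 2.1]{bartels15}, which produces for each $u \in BV(\om)\cap L^2(\om)$ a smooth approximation $u_\varepsilon = u_h$ (we have fixed $\varepsilon = h$) satisfying the two bounds
\begin{equation*}
\|u - u_h\|_{L^1(\om)} \le c\, h\, |Du|(\om) \qquad\text{and}\qquad |Du_h|(\om) \le c\, |Du|(\om);
\end{equation*}
and (ii) the standard $L^2$-interpolation error for the $L^2$-orthogonal piecewise-constant projection $\Pi_h$ applied to an $H^1$ (or $W^{1,1}$) function, namely $\|v - \Pi_h v\|_{L^2(T)} \le c\, h\, \|\nabla v\|_{L^2(T)}$ on each triangle $T$, and more generally the $W^{1,1}$-version $\|v-\Pi_h v\|_{L^1(\om)}\le c\,h\,|Dv|(\om)$ which is exactly the content of Lemma \ref{l:L1-quasiint} cited from the Appendix. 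Writing $\mathcal{C}_h u = \Pi_h u_h$, the triangle inequality gives
\begin{equation*}
\|u - \mathcal{C}_h u\|_{L^2(\om)} \le \|u - u_h\|_{L^2(\om)} + \|u_h - \Pi_h u_h\|_{L^2(\om)}.
\end{equation*}

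For the \emph{second} term I would use the elementwise $H^1$ estimate for $\Pi_h$ together with the regularity Assumption \ref{h:reggrid} on the grid: summing $\|u_h - \Pi_h u_h\|_{L^2(T)}^2 \le c\, h^2 \|\nabla u_h\|_{L^2(T)}^2$ over $T \in \mathcal{T}_h$ yields $\|u_h - \Pi_h u_h\|_{L^2(\om)} \le c\, h\, \|\nabla u_h\|_{L^2(\om)}$. The point now is that $u_h$ is the Bartels mollification at scale $\varepsilon = h$, so one controls $\|\nabla u_h\|_{L^2(\om)}$ by the inverse-type bound $\|\nabla u_h\|_{L^2(\om)} \le c\, h^{-1/2} |Du|(\om)^{1/2}$, which is the two-dimensional version of the estimate in \cite[Proposition 2.1]{bartels15} (it interpolates between the $L^1$ gradient bound $|Du_h|(\om)\le c|Du|(\om)$ and the pointwise bound $\|\nabla u_h\|_{L^\infty}\le c\,h^{-1}\|u\|_{L^\infty}$; in dimension two the relevant Gagliardo–Nirenberg/scaling exponent gives the $h^{-1/2}$). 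Hence the second term is bounded by $c\, h \cdot h^{-1/2}|Du|(\om)^{1/2} = c\, h^{1/2} |Du|(\om)^{1/2}$, as claimed.

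For the \emph{first} term $\|u - u_h\|_{L^2(\om)}$ I would interpolate: since $u, u_h \in L^\infty$ with $\|u-u_h\|_{L^\infty(\om)}$ bounded uniformly (by $\le 2\max\{-u_a,u_b\}$ when $u \in U_{ad}$, or more directly by the $L^\infty$-stability of the mollifier), one has $\|u-u_h\|_{L^2(\om)}^2 \le \|u-u_h\|_{L^\infty(\om)}\,\|u-u_h\|_{L^1(\om)} \le c\, h\, |Du|(\om)$, so $\|u-u_h\|_{L^2(\om)} \le c\, h^{1/2}|Du|(\om)^{1/2}$. (Alternatively, again by Gagliardo–Nirenberg in two dimensions, $\|u-u_h\|_{L^2(\om)} \le c\,\|u-u_h\|_{L^1(\om)}^{1/2}\,|D(u-u_h)|(\om)^{1/2} \le c\,(h|Du|(\om))^{1/2}(|Du|(\om))^{1/2}$, giving the same conclusion without invoking the $L^\infty$ bound.) Adding the two contributions and using $|Du|(\om)$ in place of $|Du|(\om)^{1/2}\cdot|Du|(\om)^{1/2}$ where needed yields the stated estimate with a single constant $c$.

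The main obstacle I anticipate is the bookkeeping around the mollification scale: one must be careful that the Bartels operator $\mathcal{C}_h = \Pi_h u_h$ with $u_h$ the scale-$h$ smoothing genuinely satisfies the $h^{-1/2}$ gradient growth in two dimensions (this is where the dimension $n = 2$ enters, via $W^{1,1}(\om) \hookrightarrow L^2(\om)$ being borderline) and that the constant $c$ is mesh-independent, which is where Assumption \ref{h:reggrid} is used. Everything else is a routine combination of the elementwise projection error, summation over the mesh, and the triangle inequality; no new idea beyond correctly chaining the two interpolation operators is required.
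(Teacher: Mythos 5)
Your argument is correct in substance, but it is organized differently from the paper's proof, and it is worth comparing the two. The paper never splits the error in $L^2$: it first interpolates the whole quantity, $\|u-\mathcal{C}_h u\|_{L^2(\om)}\le \|u-\mathcal{C}_h u\|_{L^\infty(\om)}^{1/2}\,\|u-\mathcal{C}_h u\|_{L^1(\om)}^{1/2}$, bounds the $L^\infty$ factor by a constant (carrying $\|u\|_{L^\infty(\om)}^{1/2}$), and then only needs $L^1$ estimates: $\|u-u_\varepsilon\|_{L^1(\om)}\le \varepsilon\,|Du|(\om)$ and $\|u_\varepsilon-\Pi_h u_\varepsilon\|_{L^1(\om)}\le c\,h\,\|\nabla u_\varepsilon\|_{L^1(\om)}\le c\,h\,|Du|(\om)$, the latter being Lemma \ref{l:L1-quasiint} with $s=1$ combined with the cited stability of the smoothing; with $\varepsilon=h$ this gives the claim directly. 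Your route (triangle inequality in $L^2$, elementwise Poincar\'e for the cell average, and the inverse-type bound $\|\nabla u_\varepsilon\|_{L^2(\om)}\lesssim \varepsilon^{-1/2}\|u\|_{L^\infty(\om)}^{1/2}|Du|(\om)^{1/2}$) reaches the same $h^{1/2}$ rate, but it requires an $L^2$ estimate on $\nabla u_\varepsilon$ that is not among the properties quoted from \cite[Proposition 2.1]{bartels15}; it does follow from the $L^\infty$--$L^1$ interpolation you indicate (and is dimension-independent, not a genuinely two-dimensional Gagliardo--Nirenberg effect), but it then carries the factor $\|u\|_{L^\infty(\om)}^{1/2}$, exactly as your bound for $\|u-u_\varepsilon\|_{L^2(\om)}$ does. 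In this respect you are no worse off than the paper, whose own proof also retains $\|u\|_{L^\infty(\om)}^{1/2}$ and absorbs it into $c$ (harmless in context, since the lemma is applied to controls in the box-constrained set $U_{ad}$), but neither argument yields a constant uniform over all of $BV(\om)\cap L^2(\om)$ as literally stated. Two smaller caveats: your parenthetical Gagliardo--Nirenberg alternative for the first term gives $h^{1/2}|Du|(\om)$ rather than $h^{1/2}|Du|(\om)^{1/2}$, so it recovers the stated form only after invoking a uniform bound on $|Du|(\om)$ such as Assumption \ref{a:cotaDu}; and the mean-value projection satisfies the elementwise Poincar\'e bound on convex cells without appealing to the shape-regularity Assumption \ref{h:reggrid}. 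Overall, the paper's single-interpolation argument is leaner (no $L^2$ control of $\nabla u_\varepsilon$ is ever needed), while your decomposition has the merit of treating the projection error directly in the norm in which the lemma is stated.
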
	
	\begin{proof}
		Let $u$ be in $BV(\om)$ and let $c$ be a positive generic constant independent of $h$. We apply Lemma \ref{l:L1-quasiint} with $s=1$ to $u_\varepsilon$. Next, since $C_h u$ is uniformly bounded in $L^\infty$ by its construction \cite{bartels15} and, using the \cite[Proposition 2.1]{bartels15}, we get
		\begin{align}
			\norm{u - \mathcal{C}_h u}_{L^2(\om)} 
			\leq 
			& \norm{u - \mathcal{C}_h u}^{\frac{1}{2}}_{L^\infty(\om)} \norm{u - \mathcal{C}_h u}^{\frac{1}{2}}_{L^1(\om)} \nonumber 
			\\
			\leq
			& c\norm{u}_{L^\infty(\om)}^\frac12 \norm{u - \Pi_h u_\varepsilon}^{\frac{1}{2}}_{L^1(\om)} \nonumber
			\\
			\leq
			& c(\norm{u - u_\varepsilon}_{L^1(\om)}+\norm{u_\varepsilon - \Pi_h u_\varepsilon}_{L^1(\om)})^{\frac{1}{2}} \nonumber
			\\
			\leq &  c(\varepsilon |Du|(\om)+ch\norm{\nabla u_\varepsilon}_{L^1(\om)} )^\frac{1}{2},  \nonumber
			\\
			\leq &  c(\varepsilon +h )^\frac{1}{2}|Du|(\om)^\frac{1}{2}.  \nonumber
		\end{align}
		Recalling that $\varepsilon=h$ in the Definition \ref{d:smooth_BV} of the interpolant $\mathcal{C}_h$ the result follows from assumption \ref{a:cotaDu}.
	\end{proof}
	
	Similarly, we obtain an interpolation error for $\Pi_h$ in the $L^{1}$-norm.
	\begin{corollary}\label{est_ele_fin3}
		There exists a positive constant $c$ such that 
		\[
		\|u-{\Pi}_h u\|_{L^1(\om)}\leq  c h ,
		\]
		holds for all $u\in BV(\om)$.
	\end{corollary}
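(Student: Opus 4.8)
The plan is to follow verbatim the structure of the proof of Lemma~\ref{est_ele_fin2}, replacing the composed interpolant $\mathcal{C}_h$ by $\Pi_h$ itself and dropping the interpolation-to-$L^\infty$ step since we now measure the error directly in $L^1(\om)$. The three ingredients are: (i) the $L^1$-stability of $\Pi_h$; (ii) the smooth $BV$-approximation $u_\varepsilon$ of \cite[Proposition~2.1]{bartels15}; and (iii) the $L^1$ quasi-interpolation estimate of Lemma~\ref{l:L1-quasiint} with $s=1$, applied to $u_\varepsilon$.

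First I would record that $\Pi_h$ is a contraction on $L^1(\om)$: for $v\in L^1(\om)$ and any $T\in\mathcal{T}_h$, Jensen's inequality gives $\int_T|\Pi_h v|=|T|\,\big|\tfrac{1}{|T|}\int_T v\big|\le\int_T|v|$, hence $\norm{\Pi_h v}_{L^1(\om)}\le\norm{v}_{L^1(\om)}$. Next, with $\varepsilon=h$ as fixed after Definition~\ref{d:smooth_BV}, I would use the splitting $u-\Pi_h u=(u-u_\varepsilon)+(u_\varepsilon-\Pi_h u_\varepsilon)+\Pi_h(u_\varepsilon-u)$, so that the triangle inequality together with the stability bound yields $\norm{u-\Pi_h u}_{L^1(\om)}\le 2\norm{u-u_\varepsilon}_{L^1(\om)}+\norm{u_\varepsilon-\Pi_h u_\varepsilon}_{L^1(\om)}$. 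The first term is controlled by $\norm{u-u_\varepsilon}_{L^1(\om)}\le\varepsilon\,|Du|(\om)$ from \cite[Proposition~2.1]{bartels15}, while Lemma~\ref{l:L1-quasiint} with $s=1$ applied to the $W^{1,1}$ function $u_\varepsilon$ gives $\norm{u_\varepsilon-\Pi_h u_\varepsilon}_{L^1(\om)}\le c\,h\,\norm{\nabla u_\varepsilon}_{L^1(\om)}\le c\,h\,|Du|(\om)$, the last step again by \cite[Proposition~2.1]{bartels15}. Collecting the terms and using $\varepsilon=h$ gives $\norm{u-\Pi_h u}_{L^1(\om)}\le c\,h\,|Du|(\om)$, and the stated estimate follows from Assumption~\ref{a:cotaDu}, exactly as in the closing line of the proof of Lemma~\ref{est_ele_fin2}.

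This argument is essentially routine; the only point that needs a moment's care is that Lemma~\ref{l:L1-quasiint} cannot be invoked for $u$ directly, since $u\in BV(\om)$ need not belong to $W^{1,1}(\om)$ with controlled norm. This is precisely why the mollification $u_\varepsilon$ is inserted, and the choice $\varepsilon=h$ is made so that the approximation error $\varepsilon|Du|(\om)$ and the interpolation error $h\norm{\nabla u_\varepsilon}_{L^1(\om)}$ are balanced at the same order $h$. No further technicalities are expected.
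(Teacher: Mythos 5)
Your argument is correct and is essentially the paper's own proof: the same three-term splitting $u-\Pi_h u=(u-u_\varepsilon)+(u_\varepsilon-\Pi_h u_\varepsilon)+\Pi_h(u_\varepsilon-u)$, the estimate of \cite[Proposition~2.1]{bartels15}, Lemma~\ref{l:L1-quasiint} with $s=1$ applied to $u_\varepsilon$, the choice $\varepsilon=h$, and Assumption~\ref{a:cotaDu}. Your explicit Jensen argument for the $L^1$-stability of $\Pi_h$ only makes precise the constant $C$ the paper uses implicitly.
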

	
	\begin{proof}
		As in the Lemma \ref{est_ele_fin2}, according to \cite[Proposition 2.1]{bartels15} and Lemma \ref{l:L1-quasiint}, we obtain the following estimation
		\begin{align*}
			\norm{ u -\Pi_h u}_{L^{1}(\om)} 
			&\leq \norm{ u - \mathcal{C}_h u}_{L^{1}(\om)} + \norm{\mathcal{C}_h u- \Pi_h u}_{L^{1}(\om)}
			\\
			&\leq \norm{ u - u_\varepsilon}_{L^{1}(\om)}+ \norm{ u_\varepsilon - \Pi_hu_\varepsilon}_{L^{1}(\om)}+\norm{\Pi_h (u_\varepsilon - u)}_{L^{1}(\om)} 
			\\
			&\leq \varepsilon |Du|(\om) + \norm{u_\varepsilon -\Pi_h u_\varepsilon}_{L^{1}(\om)} + C\norm{ u - u_\varepsilon}_{L^{1}(\om)} 
			\\
			&
			\leq   c(\varepsilon +h )|Du|(\om),  \nonumber
		\end{align*}
		The result follows by considering that $\varepsilon=h$ and Assumption \ref{a:cotaDu}.
	\end{proof}							
	
	The following convergence result will be crucial for our error analysis.	In fact, the second-order optimality conditions established in Theorem \ref{t:ssoc} require the approximation of the local solution in the $L^\infty$ topology.
	
	\begin{proposition}\label{p:phi_estimates} 
		Let $\bar\phi$ and $\bar \phi_h$ be the adjoint and the discrete adjoint states satisfying equations \eqref{eq:state1} and \eqref{eq:adj1h} with controls $\bar u$ and $\bar u_h$, respectively. There exist positive constants $c_A$ and $c$ (independent of $h$) such that the following relations hold:
		\begin{subequations}
			\begin{align}
				\label{adj_esti_h1} &||\bar \phi-\bar \phi_h||_{H_0^1(\om)} \le  c_A (h + \norm{\bar u - \bar u_h }_{L^2(\om)}), \text{ and}
				\\
				\label{adj_esti_inf}& \lim_{h\arrow 0}||\bar \phi-\bar \phi_h||_{L^\infty(\om)} \leq c\lim_{h\arrow 0 } \norm{\bar u- \bar u_h}_{L^2(\om)},
			\end{align}	
		\end{subequations}
		assuming that the last limit exists.
	\end{proposition}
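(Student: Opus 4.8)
The plan is to split the adjoint error into two pieces by introducing an intermediate object: the \emph{discrete adjoint associated with the continuous control} $\bar u$. That is, let $\tilde\phi_h\in Y_h$ solve $a^*(\tilde\phi_h,v)=(\bar y_h^{(c)}-y_d,v)$ for all $v\in Y_h$, where $\bar y_h^{(c)}=S_h\bar u+y_f$ is the discrete state driven by the continuous optimal control; actually it is cleaner to take $\tilde\phi_h$ to be the discrete adjoint whose right-hand side is $\bar y-y_d$ with $\bar y=S\bar u+y_f$ the \emph{continuous} state. Then write
\begin{equation*}
\|\bar\phi-\bar\phi_h\|_{X}\le \|\bar\phi-\tilde\phi_h\|_{X}+\|\tilde\phi_h-\bar\phi_h\|_{X},
\end{equation*}
for $X=H_0^1(\om)$ and for $X=L^\infty(\om)$. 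The first term is a pure finite element discretization error for a fixed elliptic problem with right-hand side $\bar y-y_d\in L^p(\om)$; by Proposition \ref{p:state_estimate} (and its $L^\infty$ analogue, which follows from the $W^{1,p}$-regularity in Lemma \ref{l:phi_W1p}(ii) together with standard $L^\infty$ finite element estimates) it is $O(h)$ in $H_0^1$ and $o(1)$ — in fact $O(h^{2-2/p})$ or $O(h|\log h|)$ — in $L^\infty$.

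The second term measures the effect of changing the data of the discrete adjoint equation: $\tilde\phi_h-\bar\phi_h\in Y_h$ solves $a^*(\tilde\phi_h-\bar\phi_h,v)=(\bar y-\bar y_h,v)$ for all $v\in Y_h$, where now $\bar y=S\bar u+y_f$ and $\bar y_h=S_h\bar u_h+y_{h,f}=S_h\bar u_h+y_f$. I would further decompose $\bar y-\bar y_h=(S\bar u-S_h\bar u)+S_h(\bar u-\bar u_h)$. The first summand is the state discretization error $O(h^2)$ in $L^2$ by Proposition \ref{p:state_estimate}; the second is controlled by the uniform boundedness of $S_h:L^2\to L^2$ (indeed $S_h:L^2\to H_0^1$) by $c\|\bar u-\bar u_h\|_{L^2(\om)}$. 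Feeding $\bar y-\bar y_h$ as data into the discrete adjoint solve and using the uniform $H_0^1$-stability of $S_h^*$ gives
\begin{equation*}
\|\tilde\phi_h-\bar\phi_h\|_{H_0^1(\om)}\le c\,\|\bar y-\bar y_h\|_{L^2(\om)}\le c_A\big(h^2+\|\bar u-\bar u_h\|_{L^2(\om)}\big),
\end{equation*}
which combined with the first bound yields \eqref{adj_esti_h1}. For the $L^\infty$ estimate of this second term I would use the discrete maximum principle / $L^\infty$-stability of the discrete solution operator (the operators $S_h,S_h^*$ are uniformly bounded from $L^2(\om)$ into $L^\infty(\om)$, exactly the discrete counterpart of the property of $S$ quoted after \eqref{eq:state} from \cite{wachsmuth}); this gives $\|\tilde\phi_h-\bar\phi_h\|_{L^\infty(\om)}\le c\|\bar y-\bar y_h\|_{L^2(\om)}\le c(h^2+\|\bar u-\bar u_h\|_{L^2(\om)})$. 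Passing to the limit $h\to0$ (the first term $\|\bar\phi-\tilde\phi_h\|_{L^\infty}\to0$, the $h^2$ terms vanish) and assuming $\lim_{h\to0}\|\bar u-\bar u_h\|_{L^2(\om)}$ exists, we obtain \eqref{adj_esti_inf}.

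The main obstacle I anticipate is making the $L^\infty$ part fully rigorous: one needs a uniform (in $h$) $L^\infty$ bound for the discrete solution operator applied to an $L^2$ (or $L^p$, $p>2$) right-hand side, and a convergence statement $\|\bar\phi-\tilde\phi_h\|_{L^\infty(\om)}\to0$ for the fixed-data problem. The former is the discrete analogue of the continuous regularity $S,S^*:L^2\to L^\infty$; it requires either a discrete maximum principle (valid under an acute/nonobtuse-angle condition on the mesh, which the regularity Assumption \ref{h:reggrid} does not by itself guarantee) or the known $W^{1,\infty}$/$L^\infty$ finite element error estimates for elliptic problems on quasi-uniform meshes (e.g. Schatz–Wahlbin). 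I would invoke the latter, citing the $W^{1,p}$-regularity of the adjoint from Lemma \ref{l:phi_W1p}(ii) to place $\bar\phi$ in $W^{1,\infty}\cap C(\bar\om)$ (via Sobolev embedding in $\reals^2$ for $p>2$) so that the standard $L^\infty$ finite element estimate $\|\bar\phi-\tilde\phi_h\|_{L^\infty(\om)}\le c h^{\,2-2/p}\,|\log h|\,\|\bar y-y_d\|_{L^p(\om)}\to0$ applies. Everything else is routine manipulation of the state and adjoint discretization estimates already recorded in Proposition \ref{p:state_estimate} and the stability bounds for $S_h$.
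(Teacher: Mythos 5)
Your proposal is correct and follows essentially the same route as the paper's proof: a triangle inequality through an intermediate adjoint state, the finite element estimates of Proposition \ref{p:state_estimate} together with stability of the (discrete) solution operators for the control-dependent part, and, for the $L^\infty$ bound, the $W^{1,p}$-regularity of Lemma \ref{l:phi_W1p}(ii) combined with an $L^\infty$ finite element convergence result for the fixed-data problem. The only cosmetic difference is that for \eqref{adj_esti_h1} the paper takes as intermediate the \emph{continuous} adjoint with data $S\bar u_h+y_f-y_d$ rather than your discrete adjoint with exact data $\bar y-y_d$, and the uniform discrete $L^2\to L^\infty$ stability bound $\|\tilde\phi_h-\bar\phi_h\|_{L^\infty(\om)}\le c\|\bar y-\bar y_h\|_{L^2(\om)}$ that you flag as the main obstacle is precisely the inequality the paper asserts at the corresponding step without further justification, so your treatment is, if anything, more explicit about it.
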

	
	\begin{proof}
		The proof is rather standard since the adjoint equation is linear. Indeed, let $\tilde \phi $ be the solution of the equation 
		$$ a^*(\phi,v)= (S \bar u_h+y_{f} - y_d ,v)_{L^2(\om)}, \quad\forall v\in H_0^1(\om),$$
		by linearity, $\hat \phi:=\bar \phi -\tilde\phi$ satisfies   
		\[
		a^*(\hat \phi,v) = (S\bar u- S \bar u_h,v)_{L^2(\om)}=(S(\bar u- \bar u_h),v)_{L^2(\om)}, \quad\forall v\in H_0^1(\om). 
		\]
		From this equation and standard elliptic estimates, we have that $ \norm{\bar \phi -\tilde\phi}_{H_0^1(\om)} \leq c \norm{\bar u -\bar u_h}_{L^2(\om)}$.  On the other hand, the error estimates for linear elliptic PDE from Proposition \ref{p:state_estimate} also applies to the finite element approximation of the adjoint equation.			
		Hence, there exists a constant $c>0$, such that 
		\begin{align}
			\norm{\bar \phi - \bar \phi_h }_{H_0^1(\om)} \leq &\norm{\bar \phi - \tilde \phi }_{H_0^1(\om)} + \norm{\tilde \phi - \bar \phi_h }_{H_0^1(\om)} \nonumber \\
			= &\norm{\bar \phi - \tilde \phi }_{H_0^1(\om)} + \norm{S^* (S\bar u_h+y_{f} - y_d) - S_h^* (S_h\bar u_h+y_{h,f} - y_d) }_{H_0^1(\om)} \nonumber 
			\\
			\leq &\norm{\bar \phi - \tilde \phi }_{H_0^1(\om)} + \norm{(S^*- S_h^*) (S\bar u_h+y_{f} - y_d)}_{H_0^1(\om)} \nonumber\\
			&\hspace{5cm}
			+ \norm{S_h^* (S\bar u_h+y_{f}-S_h\bar u_h-y_{h,f}) }_{H_0^1(\om)}
			\nonumber \\
			\leq & c(\norm{\bar u -\bar u_h}_{L^2(\om)}+h). \nonumber
		\end{align}
		Next, we prove relation \eqref{adj_esti_inf}. Let us define  $\tilde \phi_h $ as the solution to equation  
		\[ 
		a^*(\phi,v)= (\bar y - y_d,v)_{L^2(\om)}, \quad\forall v\in Y_h.
		\]
		Note that the last equation, defining $\tilde \phi_h$, has the same right-hand of the adjoint equation \eqref{eq:adj1}. We get $\norm{\tilde\phi_h - \bar\phi_h}_{L^\infty(\om)}\leq c \norm{\bar y -\bar y_h}_{L^2(\om)}$. Then, by Lemma \ref{l:phi_W1p} (ii) we know that $\bar \phi \in W_0^{1,p}(\om)$, for some $p>n$ . Thus, using \cite[Theorem 21.5]{ciarlet1990}  we have that $\tilde\phi_h \arrow \bar \phi$ in $L^\infty(\om)$. Therefore
		\begin{align*}
			\lim_{h\arrow 0 } \norm{\bar \phi - \bar\phi_h}_{L^\infty(\om)} \leq & \lim_{h\arrow 0 } (\norm{\bar \phi - \tilde\phi_h}_{L^\infty(\om)}+\norm{\tilde \phi_h - \bar\phi_h}_{L^\infty(\om)}) 
			\\
			\leq &  \lim_{h\arrow 0 } \norm{\bar \phi - \tilde\phi_h}_{L^\infty(\om)}+  c\lim_{h\arrow 0 } \norm{\bar y- \bar y_h}_{L^2(\om)} 
			\\
			\leq &c \lim_{h\arrow 0 } \norm{\bar u- \bar u_h}_{L^2(\om)}.
		\end{align*}
	\end{proof}

	According to Theorem \ref{p:u_softt}, a jump may occur when the optimal control for the continuous problem is not zero. Intuitively, the elements containing this jump should converge to a set of zero measure, provided that the boundary of the function's support is sufficiently regular. Although expected, we do not prove this property, and we will assume that the support's boundary crosses a set of elements with an area proportional to $h$. In this regard, let us introduce the following sets:
	\begin{align}
		&\mathcal{T}_h^0 = \{T \in \mathcal{T}_h: \bar u_T =\Pi_h\bar u(x)|_{T} =  0\},	\text{ and}
		\\
		&\mathcal{T}_h^\# = \{T \in \mathcal{T}_h: |\bar u(x)| >  0, \,\text{a.a.}\, x\in T \}. 
	\end{align}
	
	The following assumption is analogous to \cite[Assumption 3.3]{wachsmuth}, imposed on the structure of the active sets in the context of our discretization.
	\begin{assumption}\label{a:bsupport}
		Let  $\mathcal{T}^*_h : = \mathcal{T}_h \backslash \l( \mathcal{T}_h^0 \cup \mathcal{T}_h^\# \r)$. There exists $c^*>0$ such that 
		\begin{equation}\label{eq:Th_disc}
			|\mathcal{T}^*_h|\leq c^*\,h, \qquad \text{for all } h>0.
		\end{equation}
		
	\end{assumption}

	\begin{theorem}\label{t:convergence}
		Let $\bar{u}_h$ a global solution of problem \eqref{e:OCP_h} defined in a mesh ${\mathcal{T}_h}$ of size $h>0$. Then, the sequence $\{\bar u_h \}$ contains a subsequence $\{\bar u_h \}$(without renaming) in $U_h$, such that
		\begin{enumerate}[(i)]
			\item    $ \lim_{h\to 0}J_h(\bar{u}_h)=\inf \{ J({u}): u \in U_{ad}\cap BV(\om)\},$ 
			\item    $\bar u_h$ converges  to a solution $\bar u$ of \eqref{e:OCP_c} in $L^2(\om)$.
			\item    Under Assumptions \ref{a:local_scc} and \ref{a:bsupport}, $\lim_{h\to 0} \norm{\bar u_h -\bar u}_{L^\infty(\om)}=0$. 
		\end{enumerate}
	\end{theorem}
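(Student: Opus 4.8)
The plan is to follow the classical three-step scheme for FEM convergence of optimal control problems, adapted to the nonconvex setting via the DC-formulation. First I would prove the basic convergence in $L^2$ (parts (i) and (ii)) using compactness, and then upgrade to $L^\infty$-convergence (part (iii)) by exploiting the pointwise characterization of both $\bar u$ and $\bar u_h$ from Theorems \ref{p:u_softt} and \ref{p:rho_h} together with the adjoint estimates of Proposition \ref{p:phi_estimates}.

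For parts (i)--(ii): Since $\bar u_h \in U^h_{ad} \subset U_{ad}$, Assumption \ref{a:cotaDu} gives a uniform bound on $\norm{\bar u_h}_{BV(\om)}$, so a subsequence converges weakly-$*$ in $BV(\om)$ and strongly in $L^1(\om)$ to some $\tilde u \in U_{ad}$; boundedness in $L^\infty$ then upgrades this to strong convergence in $L^2(\om)$. The discrete states $\bar y_h = S_h \bar u_h + y_{h,f}$ converge to $S\tilde u + y_f$ in $L^2(\om)$ by Proposition \ref{p:state_estimate} (splitting $S_h\bar u_h - S\tilde u = (S_h - S)\bar u_h + S(\bar u_h - \tilde u)$ and using uniform boundedness on the first term). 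Using continuity of $\fsparse_{q,\gamma}$ from $L^1(\om)$ into $\reals$ (\cite[Lemma 2]{merino2019}) and weak lower semicontinuity of the $L^2$-norm term, one gets $J(\tilde u) \le \liminf_h J_h(\bar u_h)$. For the reverse inequality I would take an arbitrary $u \in U_{ad}\cap BV(\om)$, interpolate it by $\mathcal{C}_h u \in U^h_{ad}$ (note $\mathcal{C}_h$ preserves the box constraints since it is built from averaging), use Lemma \ref{est_ele_fin2} and Corollary \ref{est_ele_fin3} to get $\mathcal{C}_h u \to u$ in $L^2(\om)$ and hence in $L^1(\om)$, and the optimality of $\bar u_h$ to obtain $J_h(\bar u_h) \le J_h(\mathcal{C}_h u) \to J(u)$; taking the infimum over $u$ yields (i), and then $\tilde u$ is a minimizer of \eqref{e:OCP_c}, giving (ii) with $\bar u := \tilde u$.

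For part (iii): The idea is that on $\om \setminus \om_\rho$ the nonzero values of $\bar u$ are bounded away from zero (Theorem \ref{p:u_softt}(iv) with Assumption \ref{a:local_scc}), and likewise for $\bar u_h$ (Proposition \ref{p:rho_h}), so both $\bar u$ and $\bar u_h$ are given on most of $\om$ by the \emph{explicit soft-thresholding formula} from Theorem \ref{p:u_softt}(ii) applied to the (continuous resp. discrete) adjoint-plus-$w$ data. Since the soft-thresholding map $y \mapsto P_{[u_a,u_b]}(\cdots)$ is Lipschitz and $j$ is Lipschitz by Lemma \ref{l:w_prop}(b), one estimates $\norm{\bar u - \bar u_h}_{L^\infty}$ pointwise by $C(\norm{\bar\phi - \bar\phi_h}_{L^\infty} + \beta L_j \norm{\bar u - \bar u_h}_{L^\infty})$ on the bulk region — but here one must be careful that $L_j$ grows with $\gamma$, so this absorption argument is the delicate point; Assumption \ref{a:bsupport} controls the transition region $\mathcal{T}^*_h$ whose measure is $O(h)$. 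Combining with Proposition \ref{p:phi_estimates}, relation \eqref{adj_esti_inf}, one gets $\lim_h \norm{\bar\phi - \bar\phi_h}_{L^\infty} \le c \lim_h \norm{\bar u - \bar u_h}_{L^2} = 0$ by part (ii), and feeding this back closes the argument to give $\lim_h \norm{\bar u - \bar u_h}_{L^\infty(\om)} = 0$.

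The main obstacle I expect is the $L^\infty$-upgrade in (iii): handling the set where the discrete and continuous supports disagree (the elements in $\mathcal{T}^*_h$ and the possible mismatch between $\Pi_h\bar u$ being zero while $\bar u$ is not, or vice versa), and making the fixed-point/absorption estimate in $\bar u$ rigorous given that the Lipschitz constant of $j$ depends on $\gamma$ — this forces the "for $\gamma$ sufficiently large" and the zero-measure conclusions of Theorem \ref{p:u_softt}(iv) / Proposition \ref{p:rho_h} to be used carefully, so that on $\om\setminus(\om_\rho\cup\om_{\rho_h})$ the soft-thresholding formula is genuinely active for both controls and the coefficient multiplying $\norm{\bar u-\bar u_h}_{L^\infty}$ can be kept below $1$, or else the contribution is pushed into an $L^2$-type term already controlled by (ii).
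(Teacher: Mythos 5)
Your plan coincides with the paper's proof in parts (i) and (iii): (i) is obtained exactly as you describe, from $BV$ weak-$*$ compactness, continuity of $\fsparse_{q,\gamma}$ with respect to $L^1$ convergence, weak lower semicontinuity of the quadratic part, and comparison of $J^h_\gamma(\bar u_h)$ with $J^h_\gamma(\mathcal{C}_h\bar u)$ using Lemma \ref{est_ele_fin2}; and (iii) is proved elementwise, showing $\bar u_h|_T=0$ for $T\in\mathcal{T}_h^0$ when $h$ is small, and on $\mathcal{T}_h^\#$ combining the soft--thresholding characterization of $\bar u$ and $\bar u_h$ with the $L^\infty$ adjoint estimate \eqref{adj_esti_inf} and an absorption argument. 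You correctly single out the delicate point there: the global Lipschitz constant $L_j=2\gamma\delta_\gamma/q$ of Lemma \ref{l:w_prop} is useless, and the paper instead uses Assumption \ref{a:local_scc} (together with Proposition \ref{p:rho_h} and the sign agreement $\sign(\bar u_{h_T})=\sign(\bar u_T)$ for small $h$) to bound the local factor $q(1-q)\bigl(|u_\tau|+\frac{q-1}{\gamma}\bigr)^{q-2}$ by $c_0\alpha/\beta$ with $c_0<1$, so the absorption coefficient stays strictly below one; note that your fallback of ``pushing the mismatch into an $L^2$-type term'' cannot yield an $L^\infty$ bound, and in fact the paper, like you, does not treat the transition elements $\mathcal{T}_h^*$ beyond invoking Assumption \ref{a:bsupport}. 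The only genuine difference is part (ii): you get strong $L^2$ convergence directly by interpolating the strong $L^1$ convergence (from $BV$ compactness) against the uniform box bound $u_a\le\bar u_h\le u_b$, whereas the paper exploits the strong convexity of the Tikhonov term via a Taylor expansion of $F$, writing $\frac{\alpha}{2}\norm{\bar u_h-\bar u}^2_{L^2(\om)}$ in terms of $J_\gamma(\bar u_h)-J_\gamma(\bar u)$, the first-order term $(\alpha\bar u+\bar\phi,\bar u_h-\bar u)_{L^2(\om)}$, the state difference, and the difference of the $\fsparse_{q,\gamma}$ terms, and then passing to the limit using (i). Your route is shorter because the admissible set is a box; the paper's argument would also work without a uniform $L^\infty$ bound. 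Both are correct.
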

	\begin{proof}
		(i) 	Let $\{\bar u_h\}_{h} \subset U_{ad}^h \subset BV(\om)$	be a sequence of solutions \eqref{e:OCP_h} (with discretization parameter $h>0$) and associated sequence of corresponding states $\{\bar y_h\}_h$ in $H_0^1(\om)$. By the definition of $U_{ad}^h$ and the Tikhonov term in the cost function, it follows that $\{\bar u_h\}_{h}$  is a bounded sequence in $BV(\om)\cap L^2(\om)$. Also, we confirm that $\{\bar y_h\}_h$ is a bounded sequence in $H_0^1(\om)$ given our assumption and the inequality:
		\begin{align}
			\norm{\bar y_h}_{H_0^1(\om)} =\norm{S_h\bar u_h}_{H_0^1(\om)} \leq &\norm{(S_h- S)\bar u_h}_{H_0^1(\om)} + \norm{S\bar u_h}_{H_0^1(\om)} \nonumber\\
			\leq &c_A h + c\norm{\bar u_h}_{L^2(\om)}.
		\end{align}
		Therefore, we subtract (without renaming)  weakly* convergent subsequence  $\{\bar u_h\}_{h}$ in $BV(\om)$, such that $\bar u_h \overset{\ast}{\rightharpoonup} \tilde u $ in $BV(\om)$ and $\bar u_h {\rightharpoonup} \tilde u $ in $L^2(\om)$. Here $\tilde u \in U_{ad}$ because the admissible set $U_{ad}$ is weakly closed in $L^2(\om)$.  The sequence of corresponding states: $\{\bar y_h\}_h$ is such that $\bar y_h \rightharpoonup \tilde y$ in $H_0^1(\om)$, as $h \arrow 0$. Moreover, the compact embedding of $H_0^1(\om)$ into $L^2(\om)$ implies that $\bar y_h \arrow \tilde y$ in $L^2(\om)$ as $h\arrow 0$.
		
		Using the fact that $\bar u_h \overset{\ast}{\rightharpoonup} \tilde u $ in $BV(\om)$ implies that $\bar u_h \arrow \bar u$ in $L^1(\om)$ and by  \cite[Lemma 2]{merino2019} we have also that $\fsparse_{q,\gamma}(\bar u_h) \arrow \fsparse_{q,\gamma}(\tilde u)$. Then, by using the convexity of $F^h$ and the fact that $\bar u_h$ is a solution for \eqref{e:OCP_h}, we estimate:
		\begin{align}
			J_\gamma(\tilde u)=\frac{1}{2}&\| \tilde y+y_f-y_d \|^2_{L^2(\om)}+\frac{\alpha}{2}\| \tilde u\|^2_{L^2(\om)}+\beta \fsparse_{q,\gamma}(\tilde u) \nonumber \\
			\leq & \liminf_{h\arrow 0} \l\{ \frac{1}{2}\| \bar y_h+y_f-y_d \|^2_{L^2(\om)}+\frac{\alpha}{2}\| \bar u_h\|^2_{L^2(\om)} \r\} + \beta \fsparse_{q,\gamma}(\tilde u)    \nonumber \\
			\leq & \liminf_{h\arrow 0} \l\{ F^h(\bar u_h) +  \beta \fsparse_{q,\gamma}(\bar u_h)  \r\} +\beta \fsparse_{q,\gamma}(\tilde u) - \liminf_{h\arrow 0} \beta \fsparse_{q,\gamma}(\bar u_h) \nonumber\\
			%
			%
			= & \liminf_{h\arrow 0} J^h_\gamma (\bar u_h) \nonumber \\
			\leq & \liminf_{h\arrow 0} J^h_\gamma (\mathcal{C}_h  \bar u), \label{eq:convergence}	%
		\end{align}
		Moreover, by continuity of $S$ we have that $S\tilde u=\tilde y$, since $S\bar u_h \arrow S\tilde u$   and $\bar u_h \arrow \tilde u$ in $L^1(\om)$, respectively. Taking into account that $\mathcal{C}_h u\arrow u$ in $L^2(\om)$ and $S\mathcal{C}_h \bar u \arrow S\bar u$ in $L^2(\om)$ as $h\arrow 0$, it follows that $F^h(\mathcal{C}_h \bar u) \arrow  F(\bar  u)$. Hence, from \eqref{eq:convergence}, we obtain
		\[
		J_\gamma(\tilde u)	\leq J_\gamma( \bar u), \quad\forall u \in U_{ad}\cap BV(\om). \nonumber
		\]
		Since $\tilde u \in U_{ad}\cap BV(\om)$ then $\tilde  u$ is an optimal control for $\eqref{e:OCP_c}$. Therefore, we use the notation $\bar u =\tilde u$ and $\bar y =\tilde y$. 
		
		(ii) Using Taylor's expansion of the quadratic form $F(u)=~\frac{1}{2}\| y-y_d \|^2_{L^2(\om)}+\frac{\alpha}{2}\|u\|^2_{L^2(\om)}$ we have the relation
		\begin{eqnarray}
			&F(\bar u_h) -F(\bar u)&=F'(\bar u)(\bar u_h -\bar u) + \frac{1}{2}\| \bar y_h-\bar y \|^2_{L^2(\om)}+\frac{\alpha}{2}\|\bar u-\bar u_h\|^2_{L^2(\om)} \nonumber
			\\
			&	&=(\alpha \bar u + \bar\phi, \bar u_h -\bar u )_{L^2(\om)}  + \frac{1}{2}\| \bar y_h-\bar y \|^2_{L^2(\om)}+\frac{\alpha}{2}\|\bar u-\bar u_h\|^2_{L^2(\om)},\nonumber
		\end{eqnarray}		
		which is used to deduce:
		\begin{eqnarray}
			&\frac{\alpha}{2}\|\bar u_h -\bar u\|^2_{L^2(\om)} &= 
			F(\bar u_h)- F(\bar u) - (\alpha \bar u + \bar \phi, \bar u_h -\bar u )_{L^2(\om)}  - \frac{1}{2}\| \bar y_h-\bar y \|^2_{L^2(\om)} \nonumber \\
			&&= J_\gamma(\bar u_h)- J_\gamma(\bar u)  - (\alpha \bar u + \bar \phi, \bar u_h -\bar u )_{L^2(\om)}   - \frac{1}{2}\| \bar y_h-\bar y \|^2_{L^2(\om)} \nonumber \\
			&&\qquad\qquad  + \beta \fsparse_{q,\gamma} (\bar u) - \beta \fsparse_{q,\gamma} (\bar u_h).\nonumber 
		\end{eqnarray}
		Recalling that $\bar u_h \rightharpoonup \bar u$ in $L^2(\om)$ and  $\bar u_h \arrow \bar u$ in $L^1(\om)$, we have by (i) that $J_\gamma(\bar u_h) \arrow J_\gamma(\bar u)$ and  $(\alpha \bar u + \bar \phi, \bar u_h -\bar u )_{L^2(\om)} \arrow 0$. Furthermore, we have that $\beta \fsparse (\bar u) \arrow \beta \fsparse (\bar u_h)$ by  \cite[Lemma 2]{merino2019} and  $\bar y_h \arrow \bar y$ in $L^2(\om)$. Passing to the limit altogether gives the result.

		In order to proof (iii) we use Assumption \ref{a:local_scc}, let us first consider $T \in \mathcal{T}_h^0 = \{T \in \mathcal{T}_h: \bar u_T =\Pi_h\bar u(x)|_{T} =  0\}$, therefore 
		\begin{align}
			|\bar u_{h_T}|=\frac{1}{|T|}\l|\int_T \bar u_{h}(x) dx\r| =& \frac{1}{|T|}\l|\int_T \bar u_{h}(x) -\bar u_T dx \r|\nonumber \\
			=& \frac{1}{|T|}\l|\int_T \bar u_{h}(x) - \Pi_h\bar u (x) dx \r| \nonumber \\
			\leq &\frac{1}{|T|}\l[\l|\int_T \bar u_{h}(x) - \bar u(x) dx \r| + \l|\int_T \bar u(x) - \Pi_h \bar u(x) dx \r|\r] \nonumber \\
			\leq &\frac{1}{|T|}\l[ \int_T |\bar u_{h}(x) - \bar u(x)| dx +  |T|\norm{\bar u - \Pi_h \bar u}_{L^2(\om)}\r],\nonumber
		\end{align}
		by using Lemma \ref{est_ele_fin2}, we get
		\begin{align*}
			|\bar u_{h_T}|\leq &
			\frac{1}{|T|}\int_T |\bar u_{h}(x) - \bar u(x) |dx  +  \norm{\bar u - \mathcal{C}_h \bar u}_{L^2(\om)} +   \norm{ \Pi_h (\bar u_\varepsilon -  \bar u)}_{L^2(\om)} \\ 
			\leq &\frac{1}{|T|}\int_T |\bar u_{h}(x) - \bar u(x) |dx  +  c 	h^\frac{1}{2} |Du|(\om)^\frac{1}{2} +   ch.
		\end{align*}
		From this relation and since $\bar u_h \arrow \bar u$ in $L^1(\om)$ as $h\arrow 0$ it follows  that $|\bar u_{h_T}| < \frac{1}{\gamma}$ for $h$ sufficiently small, we have that 	$\bar u_{h_T}=0$.	
		
		Now, let us analyze the case  $T \in \mathcal{T}_h^\#$, since  $\bar u_h \arrow \bar u$ in $L^1(\om)$, there exists  $h_1>0$ such that $|\bar u_{h_T}-\bar u_T|=|\bar u_{h_T}-\frac{1}{|T|}\int_T\bar u (x) dx|<\epsilon$, for all $h<h_1$. Moreover, by noticing that $|\bar u_T| = |\frac{1}{|T|}\int_T\bar u (x) dx |>\rho + \frac{1}{\gamma}$,  we have that $\sign(\bar u_{h_T}) = \sign(\bar u_T)\not =0$.  Consequently, $\sign(\bar w(x)-\bar \phi(x))=\sign(\bar w_h(x)-\bar \phi_h(x))$. Therefore, using Theorem  \ref{p:u_softt} it follows that
		\begin{align}
			|\bar u_{h}(x)-\bar u (x)| =  &\Big |P_{[u_a,u_b]}\Big(\frac{\beta \bar w(x)-\bar \phi(x)}{\alpha} -\frac{\beta \delta_\gamma}{\alpha} \sign(\bar w(x)-\bar \phi(x))\Big) \nonumber \\ 
			&-P_{[u_a,u_b]}\Big(\frac{\beta \bar w_h(x)-\bar \phi_h(x)}{\alpha} -\frac{\beta \delta_\gamma}{\alpha} \sign(\bar w_h(x)-\bar \phi_h(x)) \Big) \Big| \nonumber \\
			\leq & \l|\frac{\beta \bar w(x)-\bar \phi(x)}{\alpha} -  \frac{\beta \bar w_h(x)-\bar \phi_h(x)}{\alpha}\r| \nonumber \\
			\leq & \frac{\beta}{\alpha} |\bar w(x) -\bar w_h(x)| + \frac{1}{\alpha}|\bar \phi(x)-\bar \phi_h(x)|. \label{eq:linf_conv.1}
		\end{align}
		By the definition of $\bar w_h$,  we have that $\bar w_h= j(\bar u_h) = j(\bar u_{h_T})$, for almost all $x\in T$. Then, denoting $u_\tau= \bar u(x)+ \tau(\bar u_{h_T}-\bar u(x))$, with $\tau\in (0,1)$, and considering that $\sign(\bar u_{h_T}) = \sign(\bar u_T)\not =0$ the following estimate holds:
		\begin{align}
			|\bar w(x) -\bar w_h(x)|  \leq &  	 q\left|\Big(|\bar u(x)|+ \frac{q-1}{\gamma}\Big)^{q-1} - \Big(|\bar u_{h_T}|+ \frac{q-1}{\gamma}\Big)^{q-1} \right|\nonumber \\
			= &q (1-q)|\bar u(x) -\bar u_{h_T}| \int_0^1 \Big(|u_\tau| + \frac{q-1}{\gamma}\Big)^{q-2} \,d\tau\, \nonumber  \\
			=& c_T(x) |\bar u(x) -\bar u_{h_T}|,  \nonumber 
		\end{align}
		where $c_T: T \arrow \reals$ is given by $c_T(x):=q (1-q)\ds\int_0^1 \Big(|u_\tau (x)| + \frac{q-1}{\gamma}\Big)^{q-2} \,d\tau$.  In addition, by Assumption \ref{a:local_scc} and  construction of $u_\tau$, there exist $C_0>1$ such that $|u_\tau (x)|>c_0 s^* +\frac{1-q}\gamma$. Then,  there exists $c_0<1$ such that  $\norm{c_T}_{L^\infty(T)}<c_0 \frac{\alpha}{\beta}$  for all $T\in\mathcal{T}^\#$. Therefore  
		\begin{align}
			|\bar w(x) -\bar w_h(x)|  <c_0\frac{\alpha}{\beta} |\bar u(x) -\bar u_{h_T}| =c_0 \frac{\alpha}{\beta} |\bar u(x) -\bar u_{h} (x)|. \nonumber 
		\end{align}
		Since $x$ is fixed, the last relation can be replaced in \eqref{eq:linf_conv.1}, implying that 
		\begin{align*}
			|\bar u_{h}(x)-\bar u (x)|\leq c_1|\bar \phi(x)-\bar \phi_h(x)|,
		\end{align*} 
		for some constant $c_1>0$ independent of $h$. Moreover, we can estimate 
		\begin{align}
			|\bar u_{h}(x)-\bar u (x)|\leq & c_1|\bar \phi(x)-\bar \phi_h(x)| \nonumber\\ 
			\leq &  c_1\|\bar \phi-\bar \phi_h\|_{L^\infty(T)} \nonumber \\
			\leq &  c_1\|\bar \phi-\bar \phi_h\|_{L^\infty(\om)}. \nonumber
		\end{align} 
		Finally, in view of Proposition \ref{p:phi_estimates} and (ii), taking the limit $h\arrow \infty$  we obtain
		\begin{align}
			\lim_{h\arrow 0}|\bar u_{h}(x)-\bar u (x)|
			\leq &  c_1\lim_{h\arrow 0}\|\bar \phi-\bar \phi_h\|_{L^\infty(\om)} \leq c \lim_{h\arrow 0} \norm{\bar u -\bar u_h}_{L^2(\om)} =0 . \nonumber
		\end{align} \qedhere
	\end{proof}
	\section{Error estimates}
	
	In what follows, let us denote by $\{\bar u_h \}$, the sequence of global solutions of problems \eqref{e:OCP_h} such that, under Assumptions \ref{a:local_scc} and \ref{a:bsupport}, $\lim_{h\to 0} \norm{\bar u_h -\bar u}_{L^\infty(\om)}=0$, where $\bar u$  is a solution of \eqref{e:OCP_c}, see Theorem \ref{p:u_softt}. We will obtain the estimate of $\bar u-\bar u_h$ in the $L^2$ norm using optimality conditions. We start this section with the following lemma, which invokes necessary optimality conditions \eqref{eq:vi} and \eqref{eq:vih}.

	\begin{lemma}\label{l:aux_stable}
		Let $g $ and $\hat g$ be functions in  $L^2(\om)$, and let $u$ and ${\hat{u}_h}$ be optimal controls, with corresponding states $y$ and $\hat{y}_h$, the solutions of \eqref{e:OCPL1} and \eqref{e:OCPL1_h} associated to $g$ and $\hat g$, respectively. Then, for all $v \in U_{ad}$ and all $v_h\in U^h_{ad}$ the following estimate holds:
		\begin{align*}
			\alpha\norm{u - {\hat u_h}}&_{L^2(\om)}^2 +\|{y}-\hat{y}_h\|_{L^2}^2 \nonumber \\
			&\leq ( \phi , v -{\hat{u}_h} + v_h - u)_{L^2(\om)}  +\alpha\,(  u, v -{\hat{u}_h} + v_h - u)_{L^2(\om)} \nonumber \\
			& \quad +\alpha (\hat{u}_h-u, v_h - u) )_{L^2(\om)}  \nonumber \\
			&\quad +(\hat{y}_h-{y},(S_h-S)v_h+S(v_h-u))_{L^2(\om)}+(y-y_d,(S_h-S)(v_h-\hat{u}_h)_{L^2(\om)} \nonumber \\
			& \quad + \beta\delta_\gamma (\norm{v}_{L^1(\om)}- \norm{u}_{L^1(\om)} + \norm{v_h}_{L^1(\om)}- \norm{\hat{u}_h}_{L^1(\om)} ) \nonumber\\
			& \quad -\beta(g,v -u)_{L^2(\om)}-\beta (\hat g,v_h -\hat{u}_h)_{L^2(\om)}. 
		\end{align*}
	\end{lemma}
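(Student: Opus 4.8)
The plan is to obtain the estimate purely by adding the first--order optimality conditions of the two auxiliary $L^1$--sparse problems and rearranging; no compactness, regularity, or sign argument is needed. Problem \eqref{e:OCPL1} minimizes the convex quadratic $w\mapsto F(w)-\beta(g,w)_{L^2(\om)}$ plus the convex term $\beta\delta_\gamma\norm{w}_{L^1(\om)}$ over the convex set $U_{ad}$, so its solution $u$, with adjoint $\phi=S^*(y-y_d)$ (hence $F'(u)=\phi+\alpha u$), satisfies the analogue of \eqref{eq:vi}:
\[
0\le(\phi+\alpha u-\beta g,\,v-u)_{L^2(\om)}+\beta\delta_\gamma\big(\norm{v}_{L^1(\om)}-\norm{u}_{L^1(\om)}\big),\qquad\forall v\in U_{ad}.
\]
Since $y_{h,f}=y_f$ by convention, the solution $\hat u_h$ of \eqref{e:OCPL1_h}, with discrete adjoint $\hat\phi_h=S_h^*(\hat y_h-y_d)$, satisfies the discrete analogue of \eqref{eq:vih}:
\[
0\le(\hat\phi_h+\alpha\hat u_h-\beta\hat g,\,v_h-\hat u_h)_{L^2(\om)}+\beta\delta_\gamma\big(\norm{v_h}_{L^1(\om)}-\norm{\hat u_h}_{L^1(\om)}\big),\qquad\forall v_h\in U^h_{ad}.
\]
Adding the two inequalities gives a lower bound of the form $0\le(\cdots)$, and the whole proof reduces to reorganizing the right--hand side so that the two squares surface.

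For the control part I would use the elementary identity, verified by expansion,
\[
\alpha(u,v-u)_{L^2(\om)}+\alpha(\hat u_h,v_h-\hat u_h)_{L^2(\om)}=\alpha\big(u,\,v-\hat u_h+v_h-u\big)_{L^2(\om)}+\alpha(\hat u_h-u,\,v_h-u)_{L^2(\om)}-\alpha\norm{u-\hat u_h}^2_{L^2(\om)},
\]
which sends $-\alpha\norm{u-\hat u_h}^2_{L^2(\om)}$ to the left. For the adjoint part I would substitute $\phi=S^*(y-y_d)$ and $\hat\phi_h=S_h^*(\hat y_h-y_d)$, giving $(\phi,v-u)_{L^2(\om)}=(y-y_d,S(v-u))_{L^2(\om)}$ and $(\hat\phi_h,v_h-\hat u_h)_{L^2(\om)}=(\hat y_h-y_d,S_h(v_h-\hat u_h))_{L^2(\om)}$, then insert the splittings $S_h=S+(S_h-S)$ and $\hat y_h-y_d=(y-y_d)+(\hat y_h-y)$. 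Regrouping the fragments that pair $(y-y_d)$ with $S(\cdot)$ recombines them into the single inner product $(\phi,\,v-\hat u_h+v_h-u)_{L^2(\om)}$; the fragments paired with $(\hat y_h-y)$ reduce, via the key relation $(S_h-S)\hat u_h+S(\hat u_h-u)=S_h\hat u_h-Su=\hat y_h-y$, to $-\norm{y-\hat y_h}^2_{L^2(\om)}$ together with the two mismatch terms $(\hat y_h-y,\,(S_h-S)v_h+S(v_h-u))_{L^2(\om)}$ and $(y-y_d,\,(S_h-S)(v_h-\hat u_h))_{L^2(\om)}$. The $L^1$ differences and the linear terms $-\beta(g,v-u)_{L^2(\om)}$, $-\beta(\hat g,v_h-\hat u_h)_{L^2(\om)}$ carry through untouched. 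Moving $\alpha\norm{u-\hat u_h}^2_{L^2(\om)}+\norm{y-\hat y_h}^2_{L^2(\om)}$ to the left then yields exactly the claimed bound.

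The one delicate point is the bookkeeping in the adjoint terms: the add--and--subtract steps must be ordered so that (a) all $(y-y_d)$--against--$S$ fragments recombine into precisely $(\phi,\,v-\hat u_h+v_h-u)_{L^2(\om)}$ and (b) the leftover $(\hat y_h-y)$ fragments collapse to $-\norm{y-\hat y_h}^2_{L^2(\om)}$ by means of $(S_h-S)\hat u_h+S(\hat u_h-u)=\hat y_h-y$ (this is where the convention $y_{h,f}=y_f$ enters). Everything else is a direct expansion of inner products.
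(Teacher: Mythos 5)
Your proposal is correct and follows essentially the same route as the paper's proof: add the two first--order variational inequalities (the paper writes them with explicit subgradients $\zeta,\hat\zeta$ and then applies the subgradient inequality to produce the $L^1$ differences, which is equivalent to your formulation), treat the $\alpha$--terms by the same algebraic identity, and rewrite the adjoint terms via $\phi=S^*(y-y_d)$, $\hat\phi_h=S_h^*(\hat y_h-y_d)$ with the splittings $S_h=S+(S_h-S)$ and $\hat y_h-y_d=(y-y_d)+(\hat y_h-y)$, using $S_h\hat u_h-Su=\hat y_h-y$ (where $y_{h,f}=y_f$ enters) to surface $-\norm{y-\hat y_h}^2_{L^2(\om)}$. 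Your sign bookkeeping for that last term is the correct one consistent with the lemma's statement (the paper's intermediate display \eqref{eq:estimatel13} carries a harmless sign typo there).
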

	\begin{proof} Let $u$ and $\hat{u}_h$ satisfying their corresponding first optimality conditions, with states $y = Su +y_f$ and $\hat y_h=S_h\hat{u}_h+y_f$, adjoint states $\phi \in H_0^1(\om)$ and $\hat \phi_h \in Y_h$,  multipliers $\zeta$ and $\hat\zeta$, respectively. Then, we have
		\begin{align}
			( \phi  + \beta \, (\delta_\gamma \, \zeta - g ), v -u )_{L^2(\om)} &+ \alpha\,(  u,  v -u )_{L^2(\om)}  \geq 0,&&   \forall v \in U_{ad} \quad\text{and} \nonumber\\
			( \hat\phi_h + \beta \, (\delta_\gamma \, \hat\zeta -  \hat g ), v_h -\hat{u}_h )_{L^2(\om)}  &+ \alpha\,(  \hat u_h,  v_h -\hat u_h )_{L^2(\om)}\geq 0,&&  \forall v_h \in U^h_{ad}, \nonumber
		\end{align}
		by adding these both inequalities we get that
		\begin{align}
			0 \leq &	( \phi, v-\hat{u}_h )_{L^2(\om)} + \alpha( u, v-\hat{u}_h )_{L^2(\om)} + ( \phi,  \hat{u}_h - u)_{L^2(\om)} + \alpha( u, \hat{u}_h - u))_{L^2(\om)} \nonumber \\
			&+( \hat\phi_h, v_h - u )_{L^2(\om)}+  \alpha( \hat u_h , v_h - u)_{L^2(\om)}  + ( \hat\phi_h,   u-\hat{u}_h)_{L^2(\om)} + \alpha(\hat{u}_h, u-\hat{u}_h)_{L^2(\om)}  \nonumber \\
			& + \beta (\delta_\gamma \, \zeta - g , v-u)_{L^2(\om)} + \beta (\delta_\gamma \, \hat\zeta - \hat g , v_h -\hat{u}_h)_{L^2(\om)}. \nonumber
		\end{align}
		Taking the right terms conveniently to the left--hand side, it follows that
		\begin{align}
			\alpha  (u-\hat{u}_h,u-&\hat{u}_h)_{L^2(\om)} \nonumber \leq ( \phi, v-\hat{u}_h )_{L^2(\om)} + \alpha(u, v-\hat{u}_h )_{L^2(\om)} \nonumber \\
			&\quad+ ( \hat\phi_h, v_h - u )_{L^2(\om)}+  \alpha( \hat u_h , v_h - u)_{L^2(\om)} + (\phi -\hat\phi_h, \hat{u}_h - u)_{L^2(\om)} \nonumber \\
			& \quad + \beta (\delta_\gamma \, \zeta - g , v-u)_{L^2(\om)} + \beta (\delta_\gamma \, \hat\zeta - \hat g, v_h -\hat{u}_h)_{L^2(\om)}. \label{eq:estimatel11}
		\end{align}
		Let us focus on the last two terms. Since $\zeta \in \partial \norm{\cdot}_{L^1(\om)}(u)$ and $\hat \zeta \in \partial \norm{\cdot}_{L^1(\om)}(\hat{u}_h)$, we obtain:
		\begin{align}
			\beta (\delta_\gamma \, & \zeta - g , v-u)_{L^2(\om)} + \beta (\delta_\gamma \, \hat\zeta - \hat g, v_h -\hat{u}_h)_{L^2(\om)} = \beta\delta_\gamma (\zeta , v-u)_{L^2(\om)} \nonumber\\
			& \qquad\qquad\qquad -\beta(g,v-u)_{L^2(\om)}  + \beta\delta_\gamma (\hat\zeta , v_h -\hat{u}_h)_{L^2(\om)} -\beta (\hat g,v -\hat{u}_h)_{L^2(\om)}\nonumber \\
			& \leq \beta\delta_\gamma (\norm{v}_{L^1(\om)}- \norm{u}_{L^1(\om)} + \norm{v_h}_{L^1(\om)}- \norm{\hat{u}_h}_{L^1(\om)} ) \nonumber \\
			& \qquad \qquad\qquad-\beta(g,v-u)_{L^2(\om)}-\beta (\hat g,v_h -\hat{u}_h)_{L^2(\om)}. \label{eq:estimatel12}
		\end{align}
		Now, by considering the adjoint--state terms in \eqref{eq:estimatel11}, it follows that
		\begin{align}
			( \phi, v-\hat{u}_h )&_{L^2(\om)}  + (\hat\phi_h, v_h - u )_{L^2(\om)} + (\phi -\hat\phi_h, \hat{u}_h - u)_{L^2(\om)} \nonumber \\
			=&( \phi, v-\hat{u}_h + v_h - u)_{L^2(\om)}  + (\phi-\hat\phi_h, u -v_h )_{L^2(\om)}  + (\phi -\hat\phi_h, \hat{u}_h - u)_{L^2(\om)} \nonumber \\
			=& ( \phi, v-\hat{u}_h + v_h - u)_{L^2(\om)} -  (\phi, v_h-\hat{u}_h)_{L^2(\om)} + (\hat\phi_h, v_h-\hat{u}_h)_{L^2(\om)}. \nonumber
		\end{align}
		Here, we use the fact that $\phi = S^*(y_g -y_d)$ and $\hat\phi_h = S_h^*(\hat y_h -y_d)$ to get 
		\begin{align}
			( \phi , v-&\hat{u}_h + v_h - u)_{L^2(\om)} \nonumber\\
			&-  (y-y_d, Sv_h-S\hat{u}_h)_{L^2(\om)} + (\hat y_h-{y_d}, S_hv_h-S_h\hat{u}_h)_{L^2(\om)}\nonumber\\
			=& ( \phi, v-\hat{u}_h + v_h - u)_{L^2(\om)} -  (y-y_d, Sv_h-S\hat{u}_h)_{L^2(\om)} + (\hat y_h-{y_d}, S_hv_h-\hat y_h)_{L^2(\om)} \nonumber\\
			=& ( \phi, v-\hat{u}_h + v_h - u)_{L^2(\om)} + (y-y_d,(S_h-S)(v_h-\hat{u}_h))_{L^2(\om)}   \nonumber\\
			& - ( \hat y_h-y, y - \hat y_h )_{L^2(\om)} + (\hat{y}_h-{y},(S_h-S)v_h+S(v_h-u))_{L^2(\om)}.
			\label{eq:estimatel13}
		\end{align}
		Furthermore,  we have that
		\begin{align}
			\alpha( u, v-\hat{u}_h )_{L^2(\om)} +  \alpha( \hat u_h , v_h - u)_{L^2(\om)}  = \alpha( u, v-&\hat{u}_h + v_h - u )_{L^2(\om)}  \nonumber \\
			&+ \alpha(\hat u_h-u , v_h - u)_{L^2(\om)} 
			\label{eq:estimatel14}
		\end{align}
		Replacing relations  \eqref{eq:estimatel13}, \eqref{eq:estimatel12} and \eqref{eq:estimatel14} in \eqref{eq:estimatel11} we conclude the result.
	\end{proof}
	\begin{lemma}\label{l:aux_stable2} Let $g $ and $\hat g$ be functions in  $L^\infty(\om)$, and let $u$ and ${\hat{u}_h}$ , with corresponding states $y$ and $\hat{y}_h$, the solutions of \eqref{e:OCPL1} and \eqref{e:OCPL1_h} associated to $g$ and $\hat g$, respectively. Then, there exists a constant $\hat c(\epsilon)$ independent of $h$, such that
		\begin{align*}
			(1-\epsilon){\alpha}\norm{u - {\hat u_h} }_{L^2(\om)}^2 +\frac{1}{2}\|{y}-\hat{y}_h\|_{L^2}^2 \leq &  \,\hat c(\epsilon) h + \beta (g-\hat g, u -\hat{u}_h)_{L^2(\om)},
		\end{align*}
		for some $\epsilon \in (0,1)$.
	\end{lemma}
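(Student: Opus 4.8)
\end{lemma}

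\textbf{Proof plan.} The plan is to specialize the general stability estimate of Lemma~\ref{l:aux_stable} to the test pair $v=\hat u_h$ and $v_h=\mathcal{C}_h u$, and then to bound every resulting term by a multiple of $h$, apart from the term $\beta(g-\hat g,u-\hat u_h)_{L^2(\om)}$, which is kept as it is, and two ``cross'' terms, which are absorbed into the left-hand side by Young's inequality. Both test functions are admissible: $\hat u_h\in U^h_{ad}\subset U_{ad}$, and $\mathcal{C}_h u\in U^h_{ad}$ because $u\in U_{ad}$ is box--constrained and both the smoothing $u\mapsto u_\varepsilon$ and the quasi--interpolation $\Pi_h$ preserve the bounds $u_a\le\cdot\le u_b$. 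With $v=\hat u_h$ we have $v-\hat u_h=0$, so the combination $v-\hat u_h+v_h-u$ reduces to $\mathcal{C}_h u-u$ wherever it occurs, the four $L^1$--norm terms telescope to $\beta\delta_\gamma(\norm{\mathcal{C}_h u}_{L^1(\om)}-\norm{u}_{L^1(\om)})$, and the data terms recombine as $-\beta(g,\hat u_h-u)_{L^2(\om)}-\beta(\hat g,\mathcal{C}_h u-\hat u_h)_{L^2(\om)}=\beta(g-\hat g,u-\hat u_h)_{L^2(\om)}-\beta(\hat g,\mathcal{C}_h u-u)_{L^2(\om)}$.

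Under Assumption~\ref{a:cotaDu} we have $u\in U_{ad}\subset BV(\om)$ with $|Du|(\om)\le M$, so, following Lemma~\ref{est_ele_fin2} and Corollary~\ref{est_ele_fin3}, $\norm{\mathcal{C}_h u-u}_{L^2(\om)}\le c\,h^{1/2}$ and $\norm{\mathcal{C}_h u-u}_{L^1(\om)}\le c\,h$. Using these together with $\phi\in L^\infty(\om)$ (elliptic regularity of the adjoint equation, with a bound independent of $h$), $u\in L^\infty(\om)$ and $\hat g\in L^\infty(\om)$, I would estimate $(\phi,\mathcal{C}_h u-u)_{L^2(\om)}$, $\alpha(u,\mathcal{C}_h u-u)_{L^2(\om)}$, $\beta\delta_\gamma(\norm{\mathcal{C}_h u}_{L^1(\om)}-\norm{u}_{L^1(\om)})$ and $\beta(\hat g,\mathcal{C}_h u-u)_{L^2(\om)}$ each by a constant times $\norm{\mathcal{C}_h u-u}_{L^1(\om)}=O(h)$. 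The finite--element consistency terms $(S_h-S)\mathcal{C}_h u$ and $(S_h-S)(\mathcal{C}_h u-\hat u_h)$ are $O(h^2)$ in $L^2(\om)$ by Proposition~\ref{p:state_estimate} and the uniform boundedness of $\mathcal{C}_h u$ and $\hat u_h$ in $L^2(\om)$; paired with the bounded factor $\norm{y-y_d}_{L^2(\om)}$ they contribute $O(h^2)$.

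Two terms remain. For $\alpha(\hat u_h-u,\mathcal{C}_h u-u)_{L^2(\om)}$, Young's inequality yields $\alpha(\hat u_h-u,\mathcal{C}_h u-u)_{L^2(\om)}\le\epsilon\alpha\norm{\hat u_h-u}_{L^2(\om)}^2+\tfrac{\alpha}{4\epsilon}\norm{\mathcal{C}_h u-u}_{L^2(\om)}^2$, and the last summand is $O(h)$ since $\norm{\mathcal{C}_h u-u}_{L^2(\om)}^2\le c^2 h$. Setting $\delta_h:=(S_h-S)\mathcal{C}_h u+S(\mathcal{C}_h u-u)$, the continuity of $S$ and Lemma~\ref{est_ele_fin2} give $\norm{\delta_h}_{L^2(\om)}\le c\,h^2+c\norm{\mathcal{C}_h u-u}_{L^2(\om)}\le c\,h^{1/2}$, so $(\hat y_h-y,\delta_h)_{L^2(\om)}\le\tfrac12\norm{y-\hat y_h}_{L^2(\om)}^2+\tfrac12\norm{\delta_h}_{L^2(\om)}^2\le\tfrac12\norm{y-\hat y_h}_{L^2(\om)}^2+O(h)$. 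Moving $\epsilon\alpha\norm{u-\hat u_h}_{L^2(\om)}^2$ and $\tfrac12\norm{y-\hat y_h}_{L^2(\om)}^2$ to the left-hand side of the estimate of Lemma~\ref{l:aux_stable}, gathering all the $O(h)$ and $O(h^2)$ contributions into a single $\hat c(\epsilon)\,h$, and leaving $\beta(g-\hat g,u-\hat u_h)_{L^2(\om)}$ unchanged gives precisely the asserted inequality.

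The step I expect to be the main obstacle is the bookkeeping of $h$--independence: one must verify that the $L^\infty(\om)$ and $L^2(\om)$ bounds on $\phi$, $u$, $\hat u_h$ and $\mathcal{C}_h u$ do not depend on $h$ (which follows from the box constraints defining $U_{ad}$ and from elliptic regularity), so that Proposition~\ref{p:state_estimate}, Lemma~\ref{est_ele_fin2} and Corollary~\ref{est_ele_fin3} may be invoked with $h$--independent constants, and that every occurrence of $\mathcal{C}_h u-u$ measured in $L^2(\om)$ appears squared inside a Young splitting, so that the coarser interpolation rate $h^{1/2}$ is effectively upgraded to $h$ and the overall order $h$ of the estimate is not degraded.
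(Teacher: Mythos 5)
Your argument is correct and follows essentially the paper's own proof: both apply Lemma~\ref{l:aux_stable} with $v=\hat u_h$ and a piecewise--constant interpolant of $u$ as $v_h$, bound the interpolation and consistency terms by $O(h)$ via Lemma~\ref{est_ele_fin2}, Corollary~\ref{est_ele_fin3} and Proposition~\ref{p:state_estimate}, and absorb the two cross terms into the left--hand side by Young's inequality, keeping $\beta(g-\hat g,u-\hat u_h)_{L^2(\om)}$ intact. The only deviation is your test function $v_h=\mathcal{C}_h u$ in place of the paper's $v_h=\Pi_h u$; the latter is slightly cleaner, since $\Pi_h u\in U^h_{ad}$ is immediate and the averaging property gives $\norm{\Pi_h u}_{L^1(\om)}\le\norm{u}_{L^1(\om)}$, so the $\beta\delta_\gamma$--term drops out, whereas your choice additionally requires the (plausible but unverified) fact that the smoothing $u\mapsto u_\varepsilon$ of \cite{bartels15} preserves the box constraints, plus an extra $O(h)$ bound on the $L^1$--norm difference.
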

	
	\begin{proof}
		Following the argument from \cite[Lemma 4.2]{wachsmuth}, it follows that
		$$
		\|\Pi_h{u}\|_{L^1}
		=\|\sum_{T\in \mathcal{T}_h} {u}_T \chi_T\|_{L^1}
		\le\sum_{T\in \mathcal{T}_h} \int_{\Omega} |{u}_T|\ \chi_T
		=\sum_{T\in \mathcal{T}_h} \frac{\ds\int_{T} {|u|} \,dx }{|T|} |T|
		%
		%
		=\|{u}\|_{L^1}.
		$$
		In addition, by the inclusion $U_{ad}^h \subset U_{ad}$, let us consider Lemma \ref{l:aux_stable} with $v = \hat{u}_h$ and $v_h = \Pi_h u$. Thus, we get the relation
		\begin{align}
			\alpha\norm{ u - {\hat u_h} }&_{L^2(\om)}^2 +\|{y}-\hat{y}_h\|_{L^2}^2 \nonumber \\
			&\leq ( \phi, \Pi_h u - u)_{L^2(\om)}+\alpha\, ( u, \Pi_h u - u)_{L^2(\om)}+ \alpha (\hat{u}_h-u, \Pi_h u - u )_{L^2(\om)}  \nonumber 
			\\
			&\quad +(\hat{y}_h-{y},(S_h-S)\Pi_h u+S(\Pi_h u-u))_{L^2(\om)} \nonumber 
			\\
			& \quad +(y-y_d,(S_h-S)(\Pi_h{u}_h-\hat{u}_h))_{L^2(\om)}+ \beta\delta_\gamma (\norm{\Pi_h u}_{L^1(\om)}- \norm{u}_{L^1(\om)}) \nonumber
			\\ 
			&\quad-\beta(g,\hat{u}_h -u)_{L^2(\om)}-\beta (\hat g, \Pi_hu -\hat{u}_h)_{L^2(\om)} \nonumber
			\\
			%
			&\leq \norm{\phi}_{L^\infty(\om)}\norm{\Pi_h u - u}_{L^{1}(\om)} +\alpha\norm{u}_{L^\infty(\om)}\norm{\Pi_h u - u}_{L^{1}(\om)} \nonumber 
			\\
			&\quad + \alpha (\hat{u}_h-u, \Pi_h u - u )_{L^2(\om)}  +(\hat{y}_h-{y},(S_h-S)\Pi_h u+S(\Pi_h u-u))_{L^2(\om)} \nonumber\\
			& \quad +\norm{y-y_d}_{L^2(\om)} \norm{(S_h-S)(\Pi_h u-\hat{u}_h)}_{L^2(\om)}\nonumber \\ 
			&\quad -\beta(g,\hat{u}_h -u)_{L^2(\om)}-\beta (\hat g, \Pi_hu -\hat{u}_h)_{L^2(\om)}. \label{eq:lem2_0}
		\end{align}
		Notice that the $L^1$--norm terms have vanished. Moreover by using the Corollary  \ref{est_ele_fin3} for $\norm{\Pi_h u - u}_{L^{1}(\om)}$, we are able to get a estimation for the first and second  term given by $h$.
		As for third term, due to Young's inequality  and the Corollary \ref{est_ele_fin3} we have that
		\begin{align}
			\alpha(\hat{u}_h-u, \Pi_h u - u)_{L^2(\om)} \leq & \epsilon{\alpha} \norm{ \hat{u}_h-u}^2_{L^2(\om)} + \frac{\alpha}{4\epsilon}\norm{\Pi_h u - u}^2_{L^2(\om)} \nonumber
			\\
			\leq & \epsilon {\alpha} \norm{\hat{u}_h-u}^2_{L^2(\om)} + c \frac{\alpha}{4\epsilon} \norm{\Pi_h u - u}_{L^1(\om)} \nonumber
			\\\leq & \epsilon {\alpha} \norm{\hat{u}_h-u}^2_{L^2(\om)} + c \frac{\alpha}{4\epsilon}  h \label{eq:lem2_1},
		\end{align}
		for any $\epsilon \in (0,1)$. 
		Again, by using Young's inequality, it follows that 
		\begin{align*}
			(\hat{y}_h-{y},(S_h-S)\Pi_h u +S\hat{u}_h-{u}))&\leq \frac{1}{2}\|{y}-\hat{y}_h\|^2_{L^2(\om)}
			+\frac{1}{2}\|(S_h-S)\Pi_h u+S(\Pi_h u-u)\|^2_{L^2(\om)}
			\\
			&=\frac{1}{2}\|{y}-\hat{y}_h\|^2_{L^2(\om)}
			+\|(S_h-S)\Pi_h u\|^2_{L^2(\om)}+\|S(\Pi_h u-u)\|^2_{L^2(\om)}.
		\end{align*}
		In view of Proposition \ref{p:state_estimate} (applied with control $\Pi_h u$) and the continuity of control--to--state operator; using a generic constant $c$, we get that 
		\begin{align}
			(\hat{y}_h-{y},(S_h-S)\Pi_h u +S\hat{u}_h-{u}))&\leq \frac{1}{2}\|{y}-\hat{y}_h\|^2_{L^2(\om)}
			+c_A^2 h^4+ c\|\Pi_h u-u\|_{L^1(\om)} \nonumber \\
			&\leq  \frac{1}{2}\|{y}-\hat{y}_h\|^2_{L^2(\om)}
			+ch^4+ch. \label{eq:lem2_2}
		\end{align}
		The last two terms in \eqref{eq:lem2_0} can be majorized as follows:
		\begin{align}
			-\beta(g,\hat{u}_h -u)_{L^2(\om)}-\beta (\hat g, \Pi_hu -\hat{u}_h)_{L^2(\om)}=& \beta(\hat g-g,\hat{u}_h -u)_{L^2(\om)}-\beta (\hat g, \Pi_hu -{u})_{L^2(\om)}\nonumber  \\
			\leq &  \beta (\hat g-g, \hat{u}_h-u)_{L^2(\om)}+{ \beta \norm{\hat g}_{L^\infty(\om)} \norm{ u - \Pi_hu}_{L^{1}(\om)}}. \label{eq:lem2_333}
		\end{align}
		By applying Proposition \ref{p:state_estimate} and Corollary \ref{est_ele_fin3}, and inserting \eqref{eq:lem2_1}--\eqref{eq:lem2_333} in \eqref{eq:lem2_0}; using a generic constant $c$ independent of $h$, we obtain
		\begin{align}
			\alpha\norm{ u - {\hat u_h}}&_{L^2(\om)}^2 +\|{y}-\hat{y}_h\|_{L^2}^2 \nonumber 
			\\
			&\leq c\big(\norm{\phi}_{L^\infty(\om)}+\norm{u}_{L^\infty(\om)} \big) h + \epsilon {\alpha} \norm{\hat{u}_h-u}^2_{L^2(\om)}\nonumber 
			\\
			& \quad  + c\frac{\alpha}{4 \epsilon}  h +\frac{1}{2}\|{y}-\hat{y}_h\|^2_{L^2(\om)} +ch^4+ch +\norm{y-y_d}_{L^2(\om)}c_A h^2\nonumber
			\\
			& \quad+\beta c\norm{\hat g}_{L^\infty(\om)}  h + \beta (g-\hat g, u -\hat{u}_h)_{L^2(\om)}, \nonumber
		\end{align}
		by taking similar terms to the left--hand side and using $c$ as a generic constant (independent of $h$) in the higher order terms, we get
		\begin{align}
			(1-\epsilon){\alpha}\norm{ u - {\hat u_h} }_{L^2(\om)}^2 + \frac12\|{y}-\hat{y}_h\|_{L^2}^2 &\leq c h + c \frac{\alpha}{4\epsilon} h+ch^4+ch +\norm{y-y_d}_{L^2(\om)} c_A h^2 \nonumber
			\\
			& \hspace{1.8cm}  +{\beta c\norm{\hat g}_{L^\infty(\om)}  h} + \beta (g-\hat g, u -\hat{u}_h)_{L^2(\om)} \nonumber\\
			&= \hat c(\epsilon) h  + \beta (g-\hat g, u -\hat{u}_h)_{L^2(\om)} \nonumber
		\end{align}
		Therefore, by considering the constant $\hat c (\epsilon) = \frac{\alpha}{4\epsilon}c  +c +c_A\norm{y-y_d}_{L^2(\om)}  +\beta c\norm{\hat g}_{L^\infty(\om)}$, independent of $h$, we obtain the desired result.
	\end{proof}
	
	\subsection{Order of convergence}
	\begin{remark}
		We emphasize that the estimates obtained in the previous results depend on the difference $g - \hat g$. For problem \eqref{e:OCP_c}, we may apply these results taking $g=w(\bar u)$ and $\hat g = w(\bar u_h)$ and estimate the terms involving the difference $w(\bar u)-w(\bar u_h)$ taking advantage of the Lipschitz continuity of the mapping $u \mapsto w(u)$. However, more information can be acquired using  Pontryagin's maximum principle.  
	\end{remark}
	
	\begin{theorem}\label{t:main_estimate}
		Let $\bar u \in U_{ad}$ solution problem \eqref{e:OCP_c}. Under Assumptions \ref{a:local_scc} and \ref{a:bsupport},  we have the following error estimate for sufficiently small mesh--size parameter $h$: 
		\begin{align*}
			\norm{\bar u - {\bar u_h}}_{L^2(\om)} +\|\bar{y}-\bar{y}_h\|_{L^2} \leq &  \,c h^\frac{1}{2},
		\end{align*}
		for small enough and some constant $c>0$ independent of $h$. 
	\end{theorem}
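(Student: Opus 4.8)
The plan is to recast both $\bar u$ and $\bar u_h$ as solutions of the auxiliary $L^1$--sparse problems of Remark \ref{r:auxiliar} and then apply the stability estimate of Lemma \ref{l:aux_stable2}. Recall that $\bar u$ is the unique solution of \eqref{e:OCPL1} with $g=\bar w=j(\bar u)$, while $\bar u_h$ is the solution of \eqref{e:OCPL1_h} with $\hat g=\bar w_h=j(\bar u_h)$; both data belong to $L^\infty(\om)$ with $\norm{\bar w}_{L^\infty(\om)},\norm{\bar w_h}_{L^\infty(\om)}\le\delta_\gamma$ by Lemma \ref{l:w_prop}(a). Feeding this choice into Lemma \ref{l:aux_stable2} gives, for some $\epsilon\in(0,1)$,
\begin{equation}\label{eq:error_absorb}
(1-\epsilon)\alpha\norm{\bar u-\bar u_h}_{L^2(\om)}^2+\frac12\norm{\bar y-\bar y_h}_{L^2(\om)}^2\le\hat c(\epsilon)\,h+\beta(\bar w-\bar w_h,\bar u-\bar u_h)_{L^2(\om)},
\end{equation}
with $\hat c(\epsilon)$ independent of $h$, since the only control--dependent quantity it involves is $\norm{\bar w_h}_{L^\infty(\om)}\le\delta_\gamma$. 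Everything then comes down to bounding $\beta(\bar w-\bar w_h,\bar u-\bar u_h)_{L^2(\om)}=\beta\int_\om\bigl(j(\bar u)-j(\bar u_h)\bigr)(\bar u-\bar u_h)\,dx$ by $c\,h$ plus a multiple of $\alpha\norm{\bar u-\bar u_h}_{L^2(\om)}^2$ that is strictly less than $(1-\epsilon)\alpha\norm{\bar u-\bar u_h}_{L^2(\om)}^2$ and hence absorbable on the left.

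To estimate this term I would decompose $\om$ over the three families $\mathcal{T}_h^\#$, $\mathcal{T}_h^0$, $\mathcal{T}_h^*$ and treat each separately. Over $\bigcup_{T\in\mathcal{T}_h^\#}T$ the work has essentially been done inside the proof of Theorem \ref{t:convergence}(iii): combining the pointwise characterization of Theorem \ref{p:u_softt}, a mean value argument on the segment between $\bar u(x)$ and $\bar u_{h_T}$, and Assumption \ref{a:local_scc} --- which keeps the intermediate value above $C s^*+\tfrac{1-q}{\gamma}$ with $C>1$ and therefore keeps the factor $\bigl(|u_\tau|+\tfrac{q-1}{\gamma}\bigr)^{q-2}$ below $(Cs^*)^{q-2}$, exactly balancing the constant $\frac{\alpha}{\beta q(1-q)}=(s^*)^{q-2}$ --- one obtains $|\bar w(x)-\bar w_h(x)|\le c_0\frac{\alpha}{\beta}|\bar u(x)-\bar u_h(x)|$ with a constant $c_0<1$ independent of $h$. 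Consequently
\[
\beta\int_{\bigcup_{T\in\mathcal{T}_h^\#}T}\bigl(j(\bar u)-j(\bar u_h)\bigr)(\bar u-\bar u_h)\,dx\ \le\ c_0\,\alpha\norm{\bar u-\bar u_h}_{L^2(\om)}^2 .
\]

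Over $\bigcup_{T\in\mathcal{T}_h^0}T$ one has, for $h$ small enough, $\bar u_{h_T}=0$ --- again this is precisely what is shown for $\mathcal{T}_h^0$ in the proof of Theorem \ref{t:convergence}(iii) --- so that $\bar w_h=j(0)=0$ there; using the elementary inequality $0\le j(t)t\le\delta_\gamma|t|$, the fact that $\Pi_h\bar u|_T=0$ on $\mathcal{T}_h^0$, and Corollary \ref{est_ele_fin3},
\[
\beta\int_{\bigcup_{T\in\mathcal{T}_h^0}T}\bigl(j(\bar u)-j(\bar u_h)\bigr)(\bar u-\bar u_h)\,dx=\beta\int_{\bigcup_{T\in\mathcal{T}_h^0}T}j(\bar u)\,\bar u\,dx\le\beta\delta_\gamma\norm{\bar u-\Pi_h\bar u}_{L^1(\om)}\le c\,h .
\]
Over the remaining layer $\bigcup_{T\in\mathcal{T}_h^*}T$ only the crude bounds $|j(\bar u)-j(\bar u_h)|\le 2\delta_\gamma$ (Lemma \ref{l:w_prop}(a)) and $|\bar u-\bar u_h|\le u_b-u_a$ (both controls lie in $U_{ad}$) are needed, and Assumption \ref{a:bsupport} then yields a bound by $2\beta\delta_\gamma(u_b-u_a)\,|\mathcal{T}_h^*|\le c\,h$.

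Inserting the three estimates into \eqref{eq:error_absorb} gives
\[
(1-\epsilon-c_0)\,\alpha\norm{\bar u-\bar u_h}_{L^2(\om)}^2+\frac12\norm{\bar y-\bar y_h}_{L^2(\om)}^2\le c\,h ,
\]
and since $c_0<1$ one may fix $\epsilon\in(0,1-c_0)$, conclude $\norm{\bar u-\bar u_h}_{L^2(\om)}^2+\norm{\bar y-\bar y_h}_{L^2(\om)}^2\le c\,h$, and take square roots. I expect the estimate over $\mathcal{T}_h^\#$ to be the genuine obstacle: the whole argument works only because there one gets a contraction constant $c_0$ that is \emph{strictly} smaller than $1$, and this is exactly the reason the strengthened active--set hypothesis of Assumption \ref{a:local_scc} ($C>1$) is imposed in place of the bare maximum--principle bound $|\bar u|\ge s^*+\tfrac{1-q}{\gamma}$; the boundary layer of the support $\{\bar u\neq 0\}$ is then handled cheaply by Assumption \ref{a:bsupport}.
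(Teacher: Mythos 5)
Your proposal is correct and follows essentially the same route as the paper: you invoke Lemma \ref{l:aux_stable2} with $g=\bar w=j(\bar u)$, $\hat g=\bar w_h=j(\bar u_h)$, bound the coupling term $\beta(\bar w-\bar w_h,\bar u-\bar u_h)_{L^2(\om)}$ by splitting over $\mathcal{T}_h^\#$, $\mathcal{T}_h^0$, $\mathcal{T}_h^*$ using Assumption \ref{a:local_scc} (contraction constant $c_0<1$ on the support), the vanishing of $\bar u_h$ on $\mathcal{T}_h^0$, Corollary \ref{est_ele_fin3}, and Assumption \ref{a:bsupport} for the boundary layer, and then absorb and take square roots, exactly as in the paper. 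The only deviations are cosmetic: on $\mathcal{T}_h^\#$ you reuse the first-order mean-value bound already established in the proof of Theorem \ref{t:convergence}(iii) pointwise, instead of the paper's insertion of $\Pi_h\bar w$ followed by a second-order Taylor expansion whose quadratic remainder is absorbed via the $L^\infty$-convergence, which if anything streamlines the same argument.
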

	\begin{proof}
		By choosing $u= \bar u$ and $\hat u_h =\bar u_h$ as well as $g=\bar w=j(\bar u)$ and $\hat g=\bar w_h = j(\bar u_h)$ in Lemma \ref{l:aux_stable2} it follows that 
		\begin{align}\label{eq:main1}
			(1-\epsilon) {\alpha}\norm{\bar u - {\bar u_h}}_{L^2(\om)}^2 +\frac12\|\bar{y}-\bar{y}_h\|_{L^2}^2 \leq &  \,\hat c (\epsilon)h + \beta(\bar w -\bar w_h, \bar u - \bar u_h)_{L^2(\om)}. 
		\end{align}
		Notice  that $\norm{\bar g}_{L^\infty(\om)} \leq \norm{w(\bar u_h)}_{L^\infty(\om)}\leq \delta_\gamma$ due to the Lemma \ref{l:w_prop}; thus, $\hat c(\epsilon)$ does not depend on $h$.
		
		Now, let us estimate the nonnegative term $(\bar w -\bar w_h,\bar u - \bar u_h)_{L^2(\om)}$ in a manner that can be taken to the left--hand side of \eqref{eq:main1}. 
		Since  the Corollary \ref{est_ele_fin3} (applied with control $\norm{w(x)-\Pi_h w(x)}_{L^1(\om)}$), there is a positive constant $\hat c$ such that 
		\begin{align}
			\int_\om (\bar w(x) -\bar w_h(x))(\bar u(x) - \bar u_h (x))\,dx \leq \hat c h + \int_\om (\Pi_h\bar w(x) -\bar w_h(x))(\bar u(x) - \bar u_h (x))\,dx. \label{eq:diffw1}
		\end{align}
		Next, observe that $\Pi_h\bar w$ and $\bar w_h$ belong to $U_h$. Therefore, by expressing both quantities in terms of the basis of $U_h$, we have
		\begin{align}
			\int_\om (\Pi_h\bar w(x) -&\bar w_h(x))(\bar u(x) - \bar u_h (x))\,dx = \int_{\om}\sum_{T\in{\mathcal T}_h} \l(\bar w_T- \bar{w}_{h_T} \r) \chi_T(x) (\bar u(x) - \bar u_h (x))\,dx,\nonumber
		\end{align}	
		
		
		We continue our analysis of this expression by  taking into account the sets of elements  $\mathcal{T}_h^0$, $\mathcal{T}_h^\#$ and $\mathcal{T}_h^*$. First, using the same arguments of Theorem 4, we infer that $\bar w_{h_T}=0$ for all $T\in  \mathcal{T}_h^0$  and that $\sign(\bar u_{h_T}) = \sign(\bar u_T)\not =0$, for all $T \in \mathcal{T}_h^\#$ if $h$ is sufficiently small. Hence, using the Assumption \ref{a:bsupport} we obtain
		\begin{align}
			\int_\om (\Pi_h\bar w(x) -&\bar w_h(x))(\bar u(x) - \bar u_h (x))\,dx \nonumber \\
			&\le \int_{\om}\Big|\sum_{T\in{\mathcal T}^{\#}_h} \l( \frac{1}{|T|} \int_T \bar w(\xi) \,d\xi- \bar{w}_{h_T} \r) \chi_T(x)\,\Big| \left|\bar u(x) - \bar u_h (x) \right|\,dx+ch. \label{eq:diffw.1}
		\end{align}
		Since $j$ is differentiable for $T\in\mathcal{T}_h^\#$, this gives
		\begin{align}
			\Big|\sum_{T\in{\mathcal T}^{\#}_h} \Big(\frac{1}{|T|}&\int_T \bar w(\xi) \,d\xi- \bar{w}_{h_T}\Big)\chi_T(x)\Big|  
			\leq 
			\sum_{T\in{\mathcal T}^{\#}_h}\Big| \Big(\frac{1}{|T|} \int_T \bar w(\xi) \,d\xi- \bar{w}_{h_T} \Big)\chi_T(x)\Big| \nonumber \\
			\leq& \sum_{T\in{\mathcal T}^{\#}_h} \Big( \frac{1}{|T|} \int_T\Big| \bar w(\xi) -\bar w_{h_T}\Big| d\xi\Big)\chi_T(x) \nonumber 
			\\
			\leq & \sum_{T\in{\mathcal T}^{\#}_h} \Big(\frac{q}{|T|}  \int_T \Big|\l(|\bar u_{h_T}| + \frac{q-1}{\gamma}\r)^{q-1} -\l(|\bar u(\xi)| + \frac{q-1}{\gamma}\r)^{q-1}\Big|d\xi\Big) \chi_T(x) 
			\nonumber\\
			\leq & \sum_{T\in{\mathcal T}^{\#}_h} q(1-q) \Big(\frac{1}{|T|} \int_T \l(|\bar u (\xi)| + \frac{q-1}{\gamma}\r)^{q-2} |\bar u(\xi) -\bar u_{h_T} |d\xi\Big) \chi_T(x) \nonumber 
			\\
			+	&\sum_{T\in{\mathcal T}^{\#}_h}\frac{q(1-q)(2-q)}{|T|}\Big( \int_T \l(|\tilde u (\xi)|+ \frac{q-1}{\gamma}\r)^{q-3} (\bar u(\xi) -\bar u_{h_T} )^2 d\xi \Big)\chi_T(x),
			\label{eq:diffw.2}
		\end{align}
		where $\tilde u (\xi)$ lies between $\bar u(\xi)$ and $\bar u_{h_T}$. Moreover, since $|\tilde u(\xi)|>\rho +\frac{1}{\gamma}$, it follows that
		\begin{align}
			\Big|\sum_{T\in{\mathcal T}^{\#}_h} \Big(\frac{1}{|T|} \int_T \bar w(\xi) \,d\xi- &\bar{w}_{h_T}\Big)\chi_T(x)\Big|\nonumber \\
			\leq & \sum_{T\in{\mathcal T}^{\#}_h} q(1-q) \Big(\frac{1}{|T|} \int_T \l(|\bar u (\xi)| + \frac{q-1}{\gamma}\r)^{q-2} |\bar u(\xi) -\bar u_{h_T} |d\xi \Big)\chi_T(x)\nonumber \\
			&+ \sum_{T\in{\mathcal T}^{\#}_h}\frac{q(1-q)(2-q)}{|T|} \Big(\int_T \l(\frac{q}{\gamma}\r)^{q-3} (\bar u(\xi) -\bar u_{h_T} )^2 d\xi\Big)\chi_T(x)
			\nonumber \\
			=& \sum_{T\in{\mathcal T}^{\#}_h} q(1-q) \Big(\frac{1}{|T|} \int_T \l(|\bar u (\xi)| + \frac{q-1}{\gamma}\r)^{q-2} |\bar u(\xi) -\bar u_{h_T} |d\xi\Big)\chi_T(x) \nonumber  \\
			&+ c_q \sum_{T\in{\mathcal T}^{\#}_h} \Big(\frac{1}{|T|}   \int_T (\bar u(\xi) -\bar u_{h_T} )^2 d\xi\Big)\chi_T(x),
			\label{eq:diffw.3}
		\end{align}
		with $c_q = q(1-q)(2-q)\l(\frac{q}{\gamma}\r)^{q-3} $.
		Therefore, Theorem \ref{p:u_softt} and  Assumption \ref{a:local_scc} imply that
		\begin{align}
			\beta(1-q)q\Big(\bar u(x) +\frac{q-1}{\gamma}\Big)^{q-2}
			<   c_{_\#}  {\alpha},
		\end{align}
		where $c_{_\#}$ is a positive constant in $(0,1)$. Then, because of monotonicity it follows that if $ 0< \bar u(\xi) \le u_b $, therefore $\beta (1-q)q\Big(\bar  u(\xi)+\frac{q-1}{\gamma}\Big)^{q-2}<{\alpha}$. By similar arguments, we arrive to the same conclusion in the case $u_a\leq \bar u(\xi)<-\rho - \frac{1}{\gamma}<0$. Hence, using this relation in \eqref{eq:diffw.3} we obtain 
		\begin{align}
			\beta\Big| \sum_{T\in{\mathcal T}^{\#}_h} \Big(\frac{1}{|T|} \int_T \bar w(\xi) \,d\xi- \bar{w}_{h_T}\Big)\chi_T(x)
			\Big|
			\leq & {\alpha} c_{_\#} \sum_{T\in{\mathcal T}^{\#}_h} \Big(\frac{1}{|T|} \int_T |\bar u(\xi) -\bar u_{h_T} |d\xi\Big)\chi_T(x)\nonumber\\
			& + \beta c_q \sum_{T\in{\mathcal T}^{\#}_h} \Big(\frac{1}{|T|}   \int_T (\bar u(\xi) -\bar u_{h_T} )^2 d\xi\Big)\chi_T(x)   \nonumber  
			\\
			\leq & {\alpha} c_{_\#}  \Pi_h(|\bar u - \bar{u}_{h}|) + \beta c_q \Pi_h(\bar u - \bar{u}_{h})^2,
			\nonumber
		\end{align}
		which we insert into \eqref{eq:diffw.1}. Also, applying Young's inequality we get
		
		\begin{align}
			\beta \int_\om (\Pi_h\bar w(x) &-\bar w_h(x))(\bar u(x) - \bar u_h (x))\,dx
			\leq  {\alpha} c_{_\#}\int_\om \Pi_h(|\bar u - \bar{u}_{h}|)|\bar u(x) - \bar u_h (x)|\,dx\nonumber 
			\\
			&\qquad +\beta c_q \int_\om \Pi_h(\bar u - \bar{u}_{h})^2|\bar u(x) - \bar u_h (x)|\,dx+c h \nonumber 
			\\
			&
			\leq {\alpha}  c_{_\#}  \norm{\bar u-\bar u_h}^2_{L^2(\om)} + c \norm{\bar u - \bar u_h}_{L^\infty(\om)}\norm{\bar u - \bar u_h}_{L^2(\om)}^2+c h, \label{eq:bluediffw.6}
		\end{align}
		where $c$ and $C$ are positive constants. In addition, by Theorem  \ref{t:convergence} (iii), there exists  $h_2>0$ such that $\norm{\bar u - \bar u_h}_{L^\infty(\om)}\leq \epsilon\alpha/c$, for  all $h\leq h_2$. Therefore
		\begin{align}
			\beta \int_\om (\Pi_h\bar w(x) -\bar w_h(x))(\bar u(x) - \bar u_h (x))\,dx
			\leq {\alpha}  c_{_\#} & \norm{\bar u-\bar u_h}^2_{L^2(\om)} 
			\nonumber 
			\\
			&
			+\epsilon{\alpha}\norm{\bar u - \bar u_h}_{L^2(\om)}^2+c h. \label{eq:bluediffw.76}
		\end{align}
		Using \eqref{eq:main1}, \eqref{eq:diffw1} and \eqref{eq:bluediffw.76} we  estimate 
		\begin{align}\label{eq:main2}
			(1-2\epsilon) \alpha\norm{\bar u - {\bar u_h}}_{L^2(\om)}^2 +\frac12\|\bar{y}-\bar{y}_h\|_{L^2}^2 \leq &  \,\hat c(\epsilon) h + \alpha c_{_\#} \norm{\bar u-\bar u_h}^2_{L^2(\om)},
		\end{align}
		where the positive constants $\hat c (\epsilon)$ and $c$ were redefined.
		Finally, choosing $\epsilon=\frac{1-c_{_\#}}{4}$ and taking the term $c_{_\#}{\alpha} \norm{\bar u-\bar u_h}_{L^2(\om)} $ to the left--hand side and obtaining the square root, we conclude the assertion.
	\end{proof}

	\section{Numerical experiments}
	In this final section, we verify the previous error estimate through an example implemented in Matlab. The numerical solution is computed by discretizing the state and adjoint equation in the corresponding approximation spaces. Then we solve the optimality system using a semi-smooth Newton method in the spirit of \cite{merino2021}.
	
	To confirm our theoretical convergence rate, we calculate the experimental order of convergence by using a family of decreasing values of $h$  meshes. We consider the exact solution as the corresponding numerical solution computed for a thin mesh, with which we use a reference to compare the approximated solutions for each mesh. The experimental order of convergence  is calculated by:
	\begin{equation}\label{err_conv_ex}
		EOC:=\frac{\log(||\bar{u}_{h^*}-\bar{u}_{h_1}||)-\log(||\bar{u}_{h^*}-\bar{u}_{h_2}||)}{\log(h_1)-\log(h_2)}. 
	\end{equation}
	where $h_1$ and $h_2$ are two consecutive mesh sizes, and $\bar{u}_h^*$ the approximate reference solution of the problem. In on the unit square domain $\Omega=(0,1)\times(0,1)$.

	\begin{example}
		Our first example is defined on the unit square domain $\Omega=(0,1)\times(0,1)$ and consider  $A=-\Delta$ and $y_d=10e^{-5(x^2+y^2)}$ together with fixed parameters $\alpha=0.24$, $\beta=0.0002$.  The set of admissible controls is
		\[
		\mathcal{U}_{ad}=\{u\in L^2:-0.8\le u\le 0.55,\quad\text{a.a. } x\in\Omega\}.
		\]
		The regularization parameter is set to $\gamma=16000$.
		The exact solution to this problem is not known. Therefore, we compute a reference solution $\bar{u}_h^*=\bar{u}_h$  for $h=0.0006$. 
		The results are presented for different values of  $q$  in Table \ref{tab1otro}.
	\end{example}
	
	\begin{table}[h!t!b!]
		{\tiny \begin{center}
				\begin{tabular}{SSSSSSSS} \toprule
					& &{$h_1$}& {$h_2$}& {$h_3$}& {$h_4$}& {$h_5$}& {$h_6$}\\
					\midrule
					{$q$} &   {$h$}   & 0.0366 & 0.0183 & 0.0092 &  0.0046  & 0.0023&0.0011 \\
					\midrule
					0.5&	{$\| \bar{u}_h-\bar{u}_h^*\|_{L^2(\om)}$}&  0.1784  &  0.0796    & 0.0318  &  0.0155   &  0.0083  &  0.0047\\
					& {$\text{EOC}$}&{-}&1.2&    1.3 &    1.0 &    0.9 &    0.8\\
					\midrule
					0.48&	{$\| \bar{u}_h-\bar{u}_h^*\|_{L^2(\om)}$}& 0.1819  &  0.0818 &   0.0333  &  0.0165 &   0.0104 &   0.0063\\
					&{$\text{EOC}$}&{-}&1.2 &   1.3&    1.0  &   0.7 &   0.7 \\
					\midrule
					0.45&	{$\| \bar{u}_h-\bar{u}_h^*\|_{L^2(\om)}$}& 0.1859  &  0.0862    & 0.0365  &  0.0176   & 0.0117   & 0.0067 \\
					&{$\text{EOC}$}&{-}&1.1 &   1.2    & 1.1  &  0.6   &  0.8\\
					\midrule
					0.41&	{$\| \bar{u}_h-\bar{u}_h^*\|_{L^2(\om)}$}&    0.1876  &  0.0912   & 0.0369  &  0.0193 &   0.0128  &  0.0083\\
					&{$\text{EOC}$}&{-}&1.0  &    1.3    & 0.9   &  0.6    & 0.6\\
					\midrule
					0.40&	{$\| \bar{u}_h-\bar{u}_h^*\|_{L^2(\om)}$}& 0.1876  &  0.0912    &0.0369 &   0.0193   & 0.0128   & 0.0083 \\
					
					&{$\text{EOC}$}&{-}&1.0    & 1.3    & 0.9   &  0.6 &   0.6 \\
					\midrule
					0.38&	{$\| \bar{u}_h-\bar{u}_h^*\|_{L^2(\om)}$}& 0.2019 &   0.0975   & 0.0417  &  0.0259  &  0.0167  &  0.0112 \\
					&{$\text{EOC}$}&{-}&1.1 &   1.2   &  0.7 &   0.6   &  0.6\\ 
					\midrule
					0.37&	{$\| \bar{u}_h-\bar{u}_h^*\|_{L^2(\om)}$}&  0.2112 &   0.1104  &  0.0467  &  0.0276 &   0.0185  &  0.0125 \\
					&{$\text{EOC}$}&{-}&0.9   &  1.2   &  0.8 &   0.6  & 0.6\\ 
					\midrule
					0.36&	{$\| \bar{u}_h-\bar{u}_h^*\|_{L^2(\om)}$}&0.2240 &   0.1084  &  0.0465  &  0.0301  &  0.0210   & 0.0137\\
					&{$\text{EOC}$}&{-}&  1.0   &  1.2  &   0.6   &  0.5  &   0.6 \\
					\midrule
					0.34&	{$\| \bar{u}_h-\bar{u}_h^*\|_{L^2(\om)}$}&  0.2382 &   0.1118&    0.0494  &  0.0349  &  0.0235  &  0.0151\\
					&{$\text{EOC}$}&{-}& 1.1&   1.1   &  0.5  &  0.6   & 0.6 \\
					\midrule
					0.33&	{$\| \bar{u}_h-\bar{u}_h^*\|_{L^2(\om)}$}&0.2382  &  0.1118   & 0.0493&    0.0349 &   0.0234  &  0.0150\\
					&{$\text{EOC}$}&{-}&1.1  &  1.2  &  0.5   &  0.6  &  0.6 \\
					
					\midrule
					0.32&	{$\| \bar{u}_h-\bar{u}_h^*\|_{L^2(\om)}$}&0.2382 &   0.1118 &   0.0493  &  0.0349  &  0.0234  &  0.0150\\
					&{$\text{EOC}$}&{-}&    1.1 &  1.1   &  0.5   &  0.6   & 0.6 \\
					\midrule
					0.31&	{$\| \bar{u}_h-\bar{u}_h^*\|_{L^2(\om)}$}& 0.2382  &   0.1118 &   0.0493 &   0.0349  &  0.0234  &  0.0151\\
					&{$\text{EOC}$}&{-}&1.1  &  1.2  &  0.5   &  0.6  &  0.6 \\ 
					\bottomrule
				\end{tabular}
				\caption{Computed error $\| \bar{u}_h-\bar{u}_h^*\|_{L^2(\om)}$ and experimental order of convergence for different values of the exponent $q$}
				\label{tab1otro}
		\end{center}}
	\end{table}
	
	We notice the tendency of an order $\approx \frac{1}{2}$ in each for different values of $q$ as $h$ decreases, see Figure \ref{fig_example1}. 
	\begin{figure}[h]
		\begin{subfigure}{.31\textwidth}
			\includegraphics[scale=0.25]{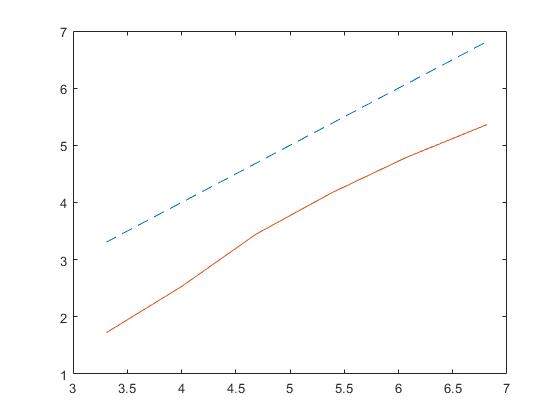}
			\caption{$q=0.5$.}
		\end{subfigure}%
		\begin{subfigure}{0.31\textwidth}
			\includegraphics[scale=0.25]{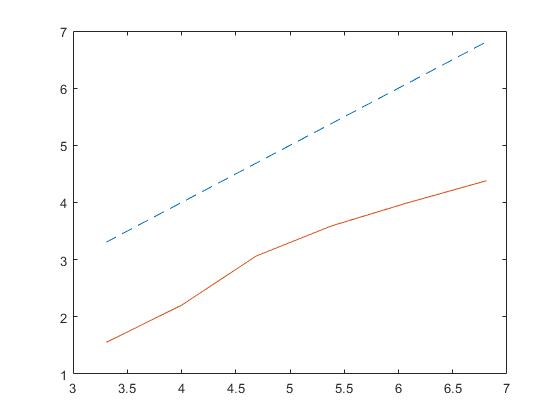}
			\caption{$q=0.38$.}
		\end{subfigure}%
		\begin{subfigure}{.31\textwidth}
			\includegraphics[scale=0.25]{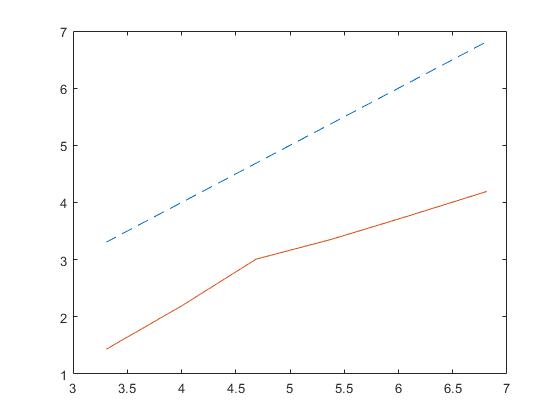}
			\caption{$q=0.31$.}
		\end{subfigure}%
		\caption{$-\log(h)$ versus $-\log(\|\bar u^*_h-\bar u_h\|)$ (solid line) compared with $-\frac{1}{2}\log(h)$ (dashed line)  varying the fractional exponent $q$}.
		\label{fig_example1}
	\end{figure}
	The plots of approximated optimal control for $h=0.0046$ are shown in the Figure \ref{fig_example2}, for different values of $q$. 

%
				\begin{figure}
						\begin{subfigure}{.3\textwidth}
								\includegraphics{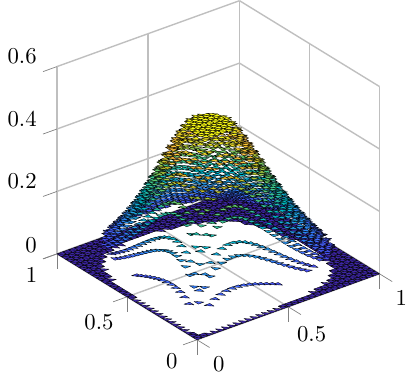}
								\caption{$q=0.5$.}
							\end{subfigure}%
						\begin{subfigure}{.3\textwidth}
								\includegraphics{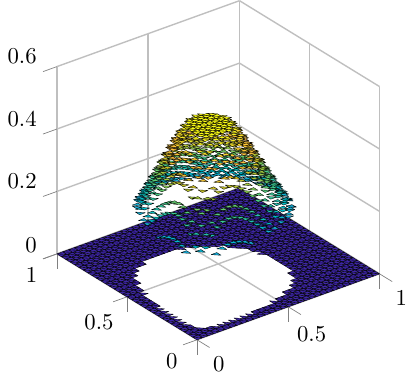}
								\caption{$q=0.38$.}
							\end{subfigure}%
						\begin{subfigure}{.3\textwidth}
								\includegraphics{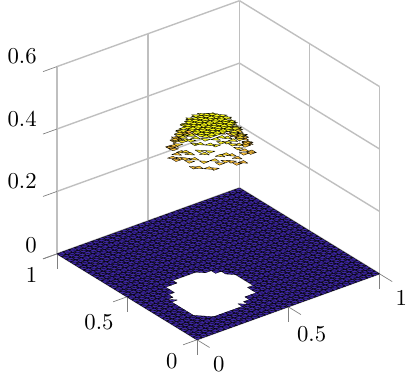}
								\caption{$q=0.31$.}
							\end{subfigure}%
						\caption{$\bar u_h$ computed at $h=0.0046$ for different values of the fractional exponent $q$}
						\label{fig_example2}
					\end{figure}


	\pagebreak
	\section{Appendix}
	In this appendix, we present an extension of the results obtained in  \cite[Section 4]{DLReMeVe}  for a quasi-interpolation operator given by 
	\begin{align}\label{general_operator_q}
		\Pi_h u:=\sum_{i}\pi_i(u) \phi_i,\quad\text{with}\quad \pi_i(u)=\frac{\int_{\omega_i}u  \phi_i}{\int_{\omega_i}\phi_i}.
	\end{align}
	
	where  $\phi_i$,  for $i=1,\dots, n$, denote the ansatz functions, and $\omega_i=\text{supp } \phi_i$.
	
	We can notice the quasi-interpolation operator given in the Definition in \ref{d:smooth_BV} is a particular case of \eqref{general_operator_q}. Therefore, the results obtained in this appendix are used to prove the Lemma \ref{est_ele_fin2} and Corollary  \ref{est_ele_fin3}.
	\begin{lemma}\label{lemma1-JC}
		For each $i \in \{1, \dots, n\},$ there is a constant $c$ which may depend on diam $\omega_i$ such that
		\[
		\norm{u-\pi_i(u)}_{L^1(\omega_i)}\le c
		\norm{\nabla u}_{L^s(\omega_i)}, \quad\forall u\in W^{1,s}(\omega_i),
		\]
		for all $\frac{2d}{d+2}\le s<\infty$.
	\end{lemma}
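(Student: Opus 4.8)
The plan is to prove Lemma~\ref{lemma1-JC} as a weighted Poincar\'e--Wirtinger estimate on the patch $\omega_i$: I would reduce the weighted average $\pi_i$ to the ordinary mean value over $\omega_i$ and then invoke the classical Sobolev--Poincar\'e inequality.

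First I would record two elementary properties of the functional $\pi_i$ from \eqref{general_operator_q}. Since $\int_{\omega_i}\phi_i\neq 0$ (which holds for the nonnegative nodal ansatz functions considered here, and for the characteristic functions $\chi_T$ of Definition~\ref{d:smooth_BV}), $\pi_i$ reproduces constants, i.e. $\pi_i(c)=c$ for every constant $c$. Moreover, by the Cauchy--Schwarz inequality, $|\pi_i(v)|\le \norm{\phi_i}_{L^2(\omega_i)}\,\big|\int_{\omega_i}\phi_i\big|^{-1}\norm{v}_{L^2(\omega_i)}$ for all $v\in L^2(\omega_i)$. Writing $\bar u:=\frac{1}{|\omega_i|}\int_{\omega_i}u$ for the mean value, the triangle inequality, together with the identity $\pi_i(u)-\bar u=\pi_i(u-\bar u)$ and the bound $\norm{u-\bar u}_{L^1(\omega_i)}\le |\omega_i|^{1/2}\norm{u-\bar u}_{L^2(\omega_i)}$, gives, for a constant $c=c(\omega_i)$,
\[
\norm{u-\pi_i(u)}_{L^1(\omega_i)}\;\le\;\norm{u-\bar u}_{L^1(\omega_i)}+|\omega_i|\,|\pi_i(u-\bar u)|\;\le\; c\,\norm{u-\bar u}_{L^2(\omega_i)}.
\]

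Next I would estimate $\norm{u-\bar u}_{L^2(\omega_i)}$ by the gradient of $u$. The hypothesis $\tfrac{2d}{d+2}\le s<\infty$ is precisely the condition ensuring $W^{1,s}(\omega_i)\hookrightarrow L^2(\omega_i)$: for $s<d$ the Sobolev conjugate $s^\ast=\tfrac{ds}{d-s}$ satisfies $s^\ast\ge 2$, whereas for $s\ge d$ this embedding holds a fortiori. Since $\omega_i$ is a bounded, connected, star--shaped polytope --- hence a John domain with constants controlled by $\operatorname{diam}\omega_i$ and the shape--regularity data of Assumption~\ref{h:reggrid} --- the classical Sobolev--Poincar\'e inequality (see, e.g., \cite{brezis2010}) yields $\norm{u-\bar u}_{L^2(\omega_i)}\le c\,\norm{\nabla u}_{L^s(\omega_i)}$ for all $u\in W^{1,s}(\omega_i)$, with $c=c(\omega_i)$. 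Chaining this with the previous display proves the lemma.

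I do not expect a genuine obstacle here; the points that need care are (i) confirming that the geometry of the finite element patch $\omega_i$ admits a Sobolev--Poincar\'e inequality whose constant depends only on $\operatorname{diam}\omega_i$ (and the fixed shape--regularity constants), which is standard for nodal stars, and (ii) the bookkeeping of the exponent range --- the substantive content of the hypothesis being that $\tfrac{2d}{d+2}\le s$ is exactly the embedding $W^{1,s}(\omega_i)\hookrightarrow L^2(\omega_i)$ used above. As an alternative one could avoid $L^2$ entirely, using the $L^1$ Poincar\'e--Wirtinger inequality $\norm{u-\bar u}_{L^1(\omega_i)}\le c\norm{\nabla u}_{L^1(\omega_i)}$, H\"older's inequality $\norm{\nabla u}_{L^1(\omega_i)}\le |\omega_i|^{1-1/s}\norm{\nabla u}_{L^s(\omega_i)}$ (valid since $s\ge\tfrac{2d}{d+2}\ge1$ when $d\ge2$), and the estimate $|\pi_i(v)|\le \norm{\phi_i}_{L^\infty(\omega_i)}\norm{v}_{L^1(\omega_i)}\big|\int_{\omega_i}\phi_i\big|^{-1}$; I would nonetheless present the $L^2$ route, as it parallels the argument of \cite{DLReMeVe}.
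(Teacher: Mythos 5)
Your proof is correct, but it is genuinely more self-contained than what the paper does: the paper's proof of Lemma~\ref{lemma1-JC} is a one-line reduction, namely $\norm{u-\pi_i(u)}_{L^1(\omega_i)}\le |\omega_i|^{1/2}\norm{u-\pi_i(u)}_{L^2(\omega_i)}$ combined with a citation of the $L^2$ patch estimate \cite[Lemma 4.1]{DLReMeVe}, which already covers exactly the range $\tfrac{2d}{d+2}\le s<\infty$. You instead re-derive the content of that cited lemma from scratch: you use that $\pi_i$ reproduces constants and is $L^2$-bounded (valid since $\phi_i\ge 0$ with $\int_{\omega_i}\phi_i>0$, both for nodal hat functions and for the characteristic functions of Definition~\ref{d:smooth_BV}), subtract the mean value $\bar u$, and then invoke the Sobolev--Poincar\'e inequality $\norm{u-\bar u}_{L^2(\omega_i)}\le c\norm{\nabla u}_{L^s(\omega_i)}$ on the patch, correctly identifying $s\ge\tfrac{2d}{d+2}$ as the condition $s^\ast\ge 2$. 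This buys independence from the external reference at the price of having to justify that the nodal patch supports a Poincar\'e inequality (star-shapedness/John-domain property), which you address; since Lemma~\ref{lemma1-JC} allows the constant to depend on $\omega_i$, no uniformity in $h$ is needed here (that is deferred to the scaling argument of Lemma~\ref{lemma2-JC}), so your argument fully suffices. Your alternative all-$L^1$ route at the end is also fine and arguably even simpler for this particular statement.
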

	\begin{proof}
		Since $L^2(\om)\hookrightarrow L^1(\om)$, the result directly follows by using  \cite[Lemma 4.1]{DLReMeVe}.\qedhere
	\end{proof}
	
	\begin{lemma}\label{lemma2-JC}
		There is a constant $c$ which is independent of $h$  such that
		\[
		\norm{u-\pi_i(u)}_{L^1(\omega_i)}\le c\, h^{d\left(1-\frac{1}{s}\right)+1}
		\norm{\nabla u}_{L^s(\omega_i)}, \quad\forall u\in W^{1,s}(\omega_i),
		\]
		for all $i\in \{1,\dots,n\}$ and all $\frac{2d}{d+2}\le s<\infty$.
	\end{lemma}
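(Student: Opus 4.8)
The plan is to combine the local estimate of Lemma \ref{lemma1-JC} with a scaling argument, reducing the problem to a reference patch. First I would fix $i\in\{1,\dots,n\}$ and introduce an affine change of variables $x\mapsto \hat x$ mapping the patch $\omega_i$ onto a reference patch $\hat\omega$ of unit diameter; by the shape-regularity Assumption \ref{h:reggrid}, the Jacobian of this map is comparable to $h^{d}$, i.e.\ $|\omega_i|\sim h^{d}$ and $\mathrm{diam}\,\omega_i\le h$, with constants depending only on the regularity parameters $\rho$ and $\sigma$. Writing $\hat u(\hat x)=u(x)$, the operator $\pi_i$ is invariant under affine maps in the sense that $\widehat{\pi_i(u)}=\hat\pi_i(\hat u)$, since it is defined through ratios of integrals against the ansatz function $\phi_i$, and $\phi_i$ transforms to the corresponding ansatz function $\hat\phi_i$ on the reference patch.

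Next I would track the powers of $h$ under the change of variables. The $L^1(\omega_i)$ norm scales as $\|u-\pi_i(u)\|_{L^1(\omega_i)} = |\det B|\,\|\hat u-\hat\pi_i(\hat u)\|_{L^1(\hat\omega)} \sim h^{d}\,\|\hat u-\hat\pi_i(\hat u)\|_{L^1(\hat\omega)}$, where $B$ is the linear part of the affine map. On the reference patch, Lemma \ref{lemma1-JC} gives $\|\hat u-\hat\pi_i(\hat u)\|_{L^1(\hat\omega)}\le c\,\|\hat\nabla\hat u\|_{L^s(\hat\omega)}$ with $c$ depending only on $\mathrm{diam}\,\hat\omega=1$ (hence uniform). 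Then I would transform the gradient term back: $\hat\nabla\hat u = B^{\top}\nabla u$, so $|\hat\nabla\hat u|\sim h\,|\nabla u|$ (since $\|B\|\sim h$), and the $L^s$ norm over $\hat\omega$ picks up the inverse volume factor $|\det B|^{-1/s}\sim h^{-d/s}$, yielding $\|\hat\nabla\hat u\|_{L^s(\hat\omega)}\sim h^{1-d/s}\,\|\nabla u\|_{L^s(\omega_i)}$. Multiplying the three contributions gives $\|u-\pi_i(u)\|_{L^1(\omega_i)}\le c\,h^{d}\cdot h^{1-d/s}\,\|\nabla u\|_{L^s(\omega_i)} = c\,h^{d(1-1/s)+1}\,\|\nabla u\|_{L^s(\omega_i)}$, which is exactly the claimed bound, with $c$ independent of $h$ because the shape-regularity Assumption \ref{h:reggrid} bounds all the implied constants uniformly over $T\in\mathcal{T}_h$ and over $h>0$.

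The main obstacle I anticipate is bookkeeping the affine invariance of $\pi_i$ carefully: one must check that the definition $\pi_i(u)=\int_{\omega_i}u\phi_i\big/\int_{\omega_i}\phi_i$ genuinely commutes with the pullback, so that the constant from Lemma \ref{lemma1-JC} on the reference configuration is truly $h$-independent and not hiding a dependence on the local geometry of $\omega_i$. This is where shape-regularity is essential — it guarantees that the reference patch $\hat\omega$ ranges over a compact family of configurations (up to the fixed number of shapes determined by $\rho,\sigma$), so the supremum of the reference constants is finite. Once that is in place, the power counting is routine. An alternative to the scaling argument would be to inspect the proof of \cite[Lemma 4.1]{DLReMeVe} directly and extract the $h$-dependence, but the change-of-variables route is cleaner and isolates the only delicate point.
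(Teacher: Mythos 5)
Your proposal is correct in spirit and follows the same basic strategy as the paper --- rescale the patch to a unit-size reference configuration, count powers of $h$ from the $L^1$ norm ($h^{d}$), the gradient under the chain rule ($h$), and the $L^s$ volume factor ($h^{-d/s}$) --- but it differs in one genuine respect. The paper does not invoke Lemma \ref{lemma1-JC} on the reference patch at all: it maps each cell $T_j^{(i)}$ of $\omega_i$ to a congruent reference cell via a bi-Lipschitz (piecewise affine) transformation $F_i$, writes $u\circ F_i-\hat\pi_i(u\circ F_i)$ explicitly as a weighted average of differences against $\phi_i\circ F_i$, and then applies H\"older and the mean value theorem directly, so the $h$-dependence comes out with explicit constants controlled only by shape regularity. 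Your route instead pulls the whole patch back by a single scaling map and cites Lemma \ref{lemma1-JC} on the normalized patch; this is where your argument still has a soft spot. Lemma \ref{lemma1-JC} only asserts, for each $i$, the existence of a constant that ``may depend on $\operatorname{diam}\omega_i$'' (and implicitly on the full geometry of $\omega_i$ and of $\phi_i$), so after normalization you must show that these reference constants are bounded uniformly over all patches and all $h$; your appeal to compactness of the family of shape-regular configurations is plausible but is exactly the step the paper's direct computation makes unnecessary, and to make it airtight you would either need continuity of the constant with respect to the configuration or, in effect, to re-derive the estimate as the paper does. Also note that a patch (a union of cells around a node) is generally not the image of a fixed reference patch under one affine map; what your bookkeeping really uses is a dilation onto a unit-diameter configuration that varies with $i$, which you partially acknowledge but should state precisely. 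With these two points repaired, your power counting gives exactly the claimed rate $h^{d(1-1/s)+1}$.
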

	\begin{proof}
		The proof of this theorem follows the same ideas of \cite[Lemma 4.2]{DLReMeVe}. We consider an arbitrary patch $\omega_i$  consisting  of the cells $T_j^{(i)}$, for $j = 1, \dots, M_i$. Then, for each $\omega_i$ we  associate a surface $\hat \omega_i$  whose vertices lie on  the unit ball in $\reals^d$, and every  $\hat \omega_i$ consists of $M_i$ congruent cells $\hat T_j^{(i)}$. Thus, we can define the function
		\begin{align*}
			\hat\pi_i(v)=\frac{\int_{\hat \omega_i}\hat\phi_i v \, d\hat x}{\int_{\hat\omega_i}\hat\phi_i \, d\hat x}=\frac{\int_{\hat \omega_i}(\phi_i\circ F_i) v \, d\hat x}{\int_{\hat\omega_i}\phi_i\circ F_i\, d\hat x},
		\end{align*}
		where $F_i$ denotes the bi-Lipschitz transformation from $\hat\omega_i$ to $\omega_i$. Therefore, we get the next estimation
		\begin{align}
			\norm{u-\pi_i(u)}_{L^1(\omega_i)}&=\sum_{j=1}^{M_i}\frac{|T_j^{(i)}|}{|\hat T_j^{(i)}|}\int_{\hat T_j^{(i)}}|u(F_j^{(i)}\hat x)-\pi_i(u)|d\hat x \nonumber
			\\
			&\leq c\, h^d \int_{\hat\omega_j}|u\circ F_i-\hat\pi_i(u\circ F_i)|d\hat x
			\nonumber
			\\
			&\leq c\, h^d \int_{\hat\omega_j}\left|\int_{\hat\omega_j} (\phi_h\circ F_i) (\hat y) (u\circ F_i(\hat x)-u\circ F_i(\hat y))d\hat y\right|d\hat x
			\nonumber
			\\
			&\leq c\, h^d \int_{\hat\omega_j}\norm{\phi_h\circ F_i}_{L^r(\omega_i)}\left(\int_{\hat\omega_j}|\nabla_{\hat x}(u\circ F_i)(\hat z) (\hat x-\hat y)|^sd\hat y \right)^\frac{1}{s}d\hat x
			\nonumber
			\\
			&\leq c\, h^d \,\norm{\phi_h\circ F_i}_{L^r(\omega_i)}\left(\int_{\hat\omega_j}|\nabla_{\hat x}(u\circ F_i)|^s d\hat x\right)^\frac{1}{s}
			\nonumber
			\\
			&\leq c\, h^d \sum_{j=1}^{M_i}\frac{|\hat T_j^{(i)}|}{|T_j^{(i)}|}\left(\int_{ T_j}|\nabla_{x}u|^s |\frac{\partial x}{\partial \hat x} |^s d x \right)^\frac{1}{s}
			\nonumber
			\\
			&\leq c\, h^{d\left(1-\frac{1}{s}\right)+1}\norm{\nabla u}_{L^s(\omega_i)}.\nonumber
		\end{align}
	\end{proof}
	\begin{lemma}\label{l:L1-quasiint}
		There is a constant $c$  independent of $h$,  such that
		\[
		\norm{u-\Pi_h(u)}_{L^1(\om)}\le c\, h^{d\left(1-\frac{1}{s}\right)+1}
		\norm{\nabla u}_{L^s(\om)}, \quad\forall u\in W^{1,s}(\om),
		\]
		with $\frac{2d}{d+2}\le s\le 2$.
	\end{lemma}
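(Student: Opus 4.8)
The plan is to derive the global bound by patching together the local estimates of Lemma~\ref{lemma2-JC}, exploiting that the ansatz functions $\{\phi_i\}$ form a partition of unity subordinate to the finitely overlapping cover $\{\omega_i\}_i$ of $\bar\om$. First I would use $\sum_i\phi_i\equiv1$ to write $u-\Pi_h u=\sum_i(u-\pi_i(u))\phi_i$, and then, since $0\le\phi_i\le1$ with $\operatorname{supp}\phi_i=\omega_i$, bound pointwise $|u(x)-\Pi_h u(x)|\le\sum_i|u(x)-\pi_i(u)|\,\phi_i(x)$. Integrating over $\om$ and using $\phi_i\le1$ yields
\[
\norm{u-\Pi_h u}_{L^1(\om)}\le\sum_i\int_{\omega_i}|u-\pi_i(u)|\,\phi_i\,dx\le\sum_i\norm{u-\pi_i(u)}_{L^1(\omega_i)}.
\]
Next I would apply Lemma~\ref{lemma2-JC} to each summand, which gives
\[
\norm{u-\Pi_h u}_{L^1(\om)}\le c\,h^{d(1-\frac{1}{s})+1}\sum_i\norm{\nabla u}_{L^s(\omega_i)},
\]
with $c$ independent of $h$.

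The remaining and principal step is to control $\sum_i\norm{\nabla u}_{L^s(\omega_i)}$. Under the shape--regularity and quasi--uniformity Assumption~\ref{h:reggrid}, each point of $\om$ belongs to at most $N_0=N_0(\rho,\sigma)$ of the patches $\omega_i$, i.e. $\sum_i\chi_{\omega_i}\le N_0$ almost everywhere on $\om$. In the case $s=1$ --- which is the only one invoked afterwards, in Lemma~\ref{est_ele_fin2} and Corollary~\ref{est_ele_fin3} --- this immediately gives $\sum_i\norm{\nabla u}_{L^1(\omega_i)}=\int_\om|\nabla u|\sum_i\chi_{\omega_i}\,dx\le N_0\norm{\nabla u}_{L^1(\om)}$, and since $d(1-\frac{1}{s})+1=1$ for $s=1$ the asserted estimate follows with a constant still independent of $h$. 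For a general exponent $s\in[\frac{2d}{d+2},2]$ I would combine the overlap bound $\sum_i\int_{\omega_i}|\nabla u|^s\le N_0\norm{\nabla u}^s_{L^s(\om)}$ with a discrete H\"older inequality over the (at most $O(h^{-d})$) patches that actually meet $\om$.

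I expect this last summation to be the main obstacle. In contrast with the $L^2$--based theory of \cite{DLReMeVe}, where one adds \emph{squared} norms and implicitly benefits from orthogonality, here the target norm is $L^1$, so one is forced to take an $\ell^1$--sum of the $L^s$ gradient norms over the patches, and the balance between the factor $h^{d(1-\frac{1}{s})+1}$ and the number of active patches must be tracked carefully. Once this is in place, collecting all the constants --- each depending only on $d$, $s$, and the mesh--regularity constants of Assumption~\ref{h:reggrid}, hence not on $h$ --- completes the proof.
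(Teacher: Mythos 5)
Your route is the paper's route: the proof in the appendix also writes $u-\Pi_h u$ through the partition of unity, bounds $\norm{u-\Pi_h u}_{L^1(\om)}\le\sum_i\norm{u-\pi_i(u)}_{L^1(\omega_i)}$, and applies Lemma~\ref{lemma2-JC} on each patch, so up to the summation step the two arguments coincide. The only divergence is how that sum over patches is closed. The paper simply writes $\sum_{i}\norm{\nabla u}_{L^s(\omega_i)}\le c\bigl(\sum_{i}\norm{\nabla u}^s_{L^s(\omega_i)}\bigr)^{1/s}\le c\,\norm{\nabla u}_{L^s(\om)}$, i.e.\ it bounds an $\ell^1$ sum of patch norms by the corresponding $\ell^s$ sum and then uses finite overlap; for $s=1$ this is exactly your overlap computation with the constant $N_0$, but for $s>1$ that first inequality goes the wrong way unless one pays the cardinality factor $n^{1-\frac1s}\sim h^{-d\left(1-\frac1s\right)}$ from discrete H\"older --- which is precisely the loss you single out as the main obstacle. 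So your hesitation is well placed: along this route (yours or the paper's) the superlinear factor $h^{d\left(1-\frac1s\right)}$ is free only locally, where $|\omega_i|\sim h^d$, and globally one obtains the stated rate only at $s=1$, where $d\left(1-\frac1s\right)+1=1$; for $s>1$ the honest conclusion of the argument is $c\,h\,\norm{\nabla u}_{L^s(\om)}$.

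Since Lemma~\ref{est_ele_fin2} and Corollary~\ref{est_ele_fin3} invoke the present lemma only with $s=1$, your completed $s=1$ argument (pointwise bound via $\sum_i\phi_i\equiv1$, patchwise Lemma~\ref{lemma2-JC}, finite overlap from shape regularity) proves everything that is actually used downstream, and in that case it is the same proof as the paper's. The general-$s$ case you leave open is not something the paper resolves either; it is passed over by the inequality noted above, so I would not count it as a gap specific to your proposal, but it does mean the lemma as stated for $s>1$ is not established by this argument.
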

	\begin{proof}
		Following the proof of \cite[Lemma 4.3]{DLReMeVe} we can estimate
		\begin{align}
			\norm{u-\pi_i(u)}_{L^1(\om)}
			&=\int_{\om}\left|u\sum_{i=1}^{n}\phi_i-\sum_{i=1}^{n}\pi_i(u)\phi_i\right|dx \nonumber
			\\
			&\le\sum_{i=1}^{n}\int_{\omega_i}|u-\pi_i(u)||\phi_i|d\hat x \nonumber
			\\
			&\leq c\, h^{d\left(1-\frac{1}{s}\right)+1}\sum_{i=1}^{n}\norm{\nabla u}_{L^s(\omega_i)}
			\nonumber
			\\
			&\leq c\, h^{d\left(1-\frac{1}{s}\right)+1}\left(\sum_{i=1}^{n}\norm{\nabla u}^s_{L^s(\omega_i)}\right)^\frac{1}{s}
			\nonumber
			\\
			&\leq c\, h^{d\left(1-\frac{1}{s}\right)+1}\norm{\nabla u}_{L^s(\om)}.
			\nonumber
		\end{align}
	\end{proof}

\end{document}